\date{}
\theoremstyle{plain}
      \newtheorem{theorem}{Theorem}[section]
      \newtheorem{lemma}[theorem]{Lemma}
      \newtheorem{corollary}[theorem]{Corollary}
\theoremstyle{definition}
      \newtheorem{definition}[theorem]{Definition}
\theoremstyle{remark}
\def\ocn{\mbox{\rm odd-cr}}
\title{A structure theorem for pseudo-segments and its applications}
\author{Jacob Fox\thanks{Stanford University, Stanford, CA. Supported by a Packard Fellowship and by NSF award DMS-1855635. Email: {\tt jacobfox@stanford.edu.}} \and J\'anos Pach\thanks{R\'enyi Institute of Mathematics, H-1364 Budapest, POB 127, Hungary.  Supported by NKFIH grants K-131529, Austrian Science Fund Z 342-N31, and ERC Advanced Grant 882971``GeoScape.''Email:
{\tt pach@cims.nyu.edu}.}\and  Andrew Suk\thanks{Department of Mathematics, University of California at San Diego, La Jolla, CA, 92093 USA. Supported an NSF CAREER award and NSF award DMS-1952786. Email: {\tt asuk@ucsd.edu}.} }
\begin{document}

\maketitle

\begin{abstract}
We prove a far-reaching strengthening of Szemer\'edi's regularity lemma for intersection graphs of pseudo-segments. It shows that the vertex set of such a graph can be partitioned into a bounded number of parts of roughly the same size such that almost all bipartite graphs between different pairs of parts are \emph{complete} or \emph{empty}. We use this to get an improved bound on disjoint edges in simple topological graphs, showing that every $n$-vertex simple topological graph with no $k$ pairwise disjoint edges has at most $n(\log n)^{O(\log k)}$ edges. 

\end{abstract}

\section{Introduction}

Given a set of curves $\mathcal{C}$ in the plane, we say that $\mathcal{C}$ is a collection of \emph{pseudo-segments} if any two members in $\mathcal{C}$ have at most one point in common, and no three members in $\mathcal{C}$ have a point in common. The intersection graph of a collection $\mathcal{C}$ of sets has vertex set $\mathcal{C}$ and two sets in $\mathcal{C}$ are adjacent if and
only if they have a nonempty intersection. 

A partition of a set is an \emph{equipartition} if each pair of parts in the partition differ in size by at most one. Szemer\'edi's celebrated regularity lemma roughly says that the vertex set of any graph has an equipartition such that the bipartite graph between almost all pairs of parts is random-like. 

Our main result is a strengthening of Szemer\'edi's regularity lemma for intersection graphs of pseudo-segments. It replaces the condition that the bipartite graphs between almost all pairs of parts is random-like to being homogeneous, either complete or empty. 

\begin{theorem}\label{regularity}
For each $\varepsilon>0$ there is $K=K(\varepsilon)$ such that for every finite collection $\mathcal{C}$ of pseudo-segments in the plane, there is an equipartition of $\mathcal{C}$ into $K$ parts $\mathcal{C}_1,\ldots,\mathcal{C}_K$ such that for all but at most $\varepsilon K^2$ pairs $\mathcal{C}_i$, $\mathcal{C}_j$ of parts, either every curve in $\mathcal{C}_i$ crosses every curve in $\mathcal{C}_j$, or every curve in $\mathcal{C}_i$ is disjoint from every curve in $\mathcal{C}_j$.   
\end{theorem}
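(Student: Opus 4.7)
The plan is to derive Theorem~\ref{regularity} by combining a bipartite Ramsey-type statement special to pseudo-segments with a potential-increment refinement argument of the kind used in Szemer\'edi's proof of the standard regularity lemma.

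\textbf{Step 1: Bipartite Ramsey for pseudo-segments.}
The geometric input is a lemma of the following shape: there is an absolute constant $\delta>0$ such that for any two finite collections $\mathcal{A}, \mathcal{B}$ of pseudo-segments of equal size $n$ in the plane, one can find $\mathcal{A}' \subseteq \mathcal{A}$, $\mathcal{B}' \subseteq \mathcal{B}$ with $|\mathcal{A}'|, |\mathcal{B}'| \geq \delta n$ for which $(\mathcal{A}', \mathcal{B}')$ is homogeneous, i.e., every curve in $\mathcal{A}'$ crosses every curve in $\mathcal{B}'$, or every such pair is disjoint. To prove this, split into two regimes based on the edge density of the bipartite intersection graph $G(\mathcal{A}, \mathcal{B})$. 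If the density lies below some small absolute threshold, a VC-dimension argument (pseudo-segments induce a set system of constant VC-dimension) produces linear-sized subfamilies with no crossings at all. If the density is above the threshold, the structural rigidity of pseudo-segments --- specifically, the bounds on how many disjoint or crossing pairs can coexist in a pseudo-segment arrangement without forcing a large complete bipartite intersection pattern --- yields a linear-sized complete biclique.

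\textbf{Step 2: Iterative refinement.}
Using Step~1, build the partition in stages. Start with $\mathcal{P}_0 = \{\mathcal{C}\}$. At stage $t$, if fewer than $\varepsilon$ fraction of the pairs of parts in $\mathcal{P}_t$ are non-homogeneous, stop. Otherwise, pick a non-homogeneous pair $(\mathcal{C}_i,\mathcal{C}_j)$, extract linear-sized homogeneous subparts $\mathcal{A}' \subseteq \mathcal{C}_i$, $\mathcal{B}' \subseteq \mathcal{C}_j$ by Step~1, and refine $\mathcal{P}_t$ by partitioning every part according to its crossing/disjointness pattern with $\mathcal{A}'$ and $\mathcal{B}'$. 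Measure progress by the mean-square density potential
\[
\Phi(\mathcal{P}) \;=\; \frac{1}{|\mathcal{C}|^2}\sum_{i,j}|\mathcal{C}_i|\,|\mathcal{C}_j|\,d(\mathcal{C}_i,\mathcal{C}_j)^2 \;\in\; [0,1],
\]
where $d(\mathcal{C}_i,\mathcal{C}_j)$ is the crossing density. A convexity (defect) calculation in the style of Szemer\'edi's proof shows that each refinement increases $\Phi$ by at least a quantity depending only on $\varepsilon$ and $\delta$. Hence the iteration halts after at most $K = K(\varepsilon)$ stages with a partition in which at least a $(1-\varepsilon)$ fraction of pairs are homogeneous.

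\textbf{Step 3: Equipartition cleanup.}
The partition from Step~2 need not have equal parts, but each part has size at least a positive fraction of $|\mathcal{C}|/K$. Subdivide each part into blocks of size $\lfloor |\mathcal{C}|/K\rfloor$, distributing the small leftover arbitrarily. This produces an equipartition into $K$ parts; homogeneity is preserved within each original homogeneous pair, and the subdivision creates at most $O(\varepsilon K^2)$ new non-homogeneous pairs. Shrinking the initial $\varepsilon$ by a constant factor delivers the statement of Theorem~\ref{regularity}.

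The main obstacle is Step~1: producing \emph{linearly} large completely homogeneous bipartite subfamilies. General bounded-VC-dimension regularity lemmas yield only near-homogeneous (low discrepancy) pairs, not genuinely complete or empty ones, and the analogous bipartite Ramsey statement fails for general string graphs. The fact that pseudo-segments exclude arbitrarily dense bipartite crossing patterns --- a property much more restrictive than bounded VC-dimension --- is what powers Step~1 and hence the entire theorem.
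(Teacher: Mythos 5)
There are two genuine gaps, and together they cover essentially all of the real content of the theorem. First, your Step~1 is not a proof but a restatement of the paper's main technical theorem (the ``mighty Erd\H{o}s--Hajnal'' / bipartite Ramsey statement for pseudo-segments), and the sketch you give for it does not work. In the low-density regime, bounded VC-dimension does \emph{not} produce linear-sized subfamilies with no crossings at all: VC-type arguments yield only approximate (low-discrepancy) or sub-linear homogeneous structure, and even the strong Erd\H{o}s--Hajnal property for curves pairwise crossing at most once is a nontrivial theorem of Fox--Pach--T\'oth, while its bipartite version for \emph{disjointness} is exactly the new hard result here (the paper stresses it is the first density-type theorem for disjointness graphs of pseudo-segments). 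Its proof occupies Sections 3--5: reduction via the polynomial R\"odl property to $\varepsilon$-homogeneous families, a separator argument for the low--low case, a double-grounding reduction using the cutting lemma and the Goodman--Pollack theorem, and an induction for the high--low and high--high cases. None of this is replaced by the two sentences in your Step~1; in particular the observation that low red--blue crossing density says nothing about the red--red or blue--blue densities already defeats the separator-style shortcut you seem to have in mind.

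Second, the termination argument in Step~2 fails because homogeneity in this theorem is \emph{exact} (complete or empty), not approximate. A pair of parts whose crossing density $d$ is positive but tiny (say one crossing pair between two large parts) is non-homogeneous and therefore triggers a refinement, yet the mean-square increment you get from extracting an exactly-empty $\delta$-fraction subpair is only on the order of $\delta^2 d^2$, which is not bounded below by any function of $\varepsilon$ and $\delta$. Hence the number of refinement rounds, and with it $K$, cannot be bounded in terms of $\varepsilon$ alone by this potential. The paper avoids this by a different bookkeeping: it applies the bipartite Ramsey statement to \emph{every} current pair simultaneously, keeps the extracted homogeneous blocks, chops the remainders into pieces of proportional size, and tracks the fraction of vertex pairs lying in non-homogeneous blocks, which decays geometrically like $(1-c^2)^i$; after $O(c^{-2}\log(1/\varepsilon))$ rounds a common refinement and a rebalancing step give the equipartition. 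Relatedly, your Step~3 premise that every part produced by the iteration has size at least a constant fraction of $|\mathcal{C}|/K$ is unjustified (iterated $\delta$-fraction extractions produce parts of wildly different sizes), so the cleanup also needs the more careful argument the paper uses.
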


Pach and Solymosi \cite{solymosi2} proved the special case of Theorem \ref{regularity} where $\mathcal{C}$ is a collection of segments in the plane, and this result was later extended to semi-algebraic graphs \cite{APPRS} and hypergraphs \cite{FPS16} of bounded description complexity. However, the techniques used to prove these results heavily rely on the algebraic structure. In fact,
while it follows from the Milnor-Thom theorem that there are only $2^{O(n\log n)}$ graphs on $n$ vertices which are semialgebraic of bounded description complexity (see \cite{solymosi,APPRS,Sauermann}) there are many more (namely $2^{\Omega(n^{4/3})}$) graphs on $n$ vertices which are intersection graphs of pseudo-segments \cite{fps+}. 

Theorem~\ref{regularity} does not hold if we do not allow to have exceptional pairs of parts. Indeed, the so-called half-graph (i.e., the graph on the vertex set $\{u_i, v_i: 1\le i\le n\}$ with $u_iv_j\in E(G)$ if and only if $i<j$) can be represented as the intersection graph of segments, and it is easy to see that it has no equipartition without exceptional pairs~\cite{MS14}.

Next, we discuss an application of Theorem \ref{regularity} %to the Tur\'an-type problem for pairwise disjoint edges 
in graph drawing.
\medskip

\noindent {\bf Disjoint edges in simple topological graphs.} A \emph{topological graph} is a graph drawn in the plane such that its vertices are represented by points
 and its edges are represented by nonself-intersecting arcs connecting the corresponding points. The edges are
allowed to intersect, but they may not intersect vertices apart from their endpoints.  Furthermore, no two edges are tangent, i.e., if two edges share an interior point, then they must properly cross at that point in common.  A topological graph is \emph{simple} if every pair of its edges intersect at most once.  Two edges of a topological graph \emph{cross} if their interiors share a point, and are \emph{disjoint} if they neither share a common vertex nor cross.

A \emph{thrackle} is a simple topological graph with no two disjoint edges.  A famous conjecture due to Conway states that every $n$-vertex thrackle has at most $n$ edges.  In fact, Conway offered a \$1000 reward for a proof or disproof of this conjecture.  The first linear upper bound was established by  Lov\'asz, Pach, and Szegedy in \cite{lovasz}, who showed that every $n$-vertex thrackle has at most $2n$ edges.  This bound was subsequently improved in \cite{cairns1,fulek,fulek2}.   

Determining the maximum number of edges in a simple topological graph with no $k$ pairwise disjoint edges seems to be a difficult task. In \cite{pach2},  Pach and T\'oth showed that every $n$-vertex simple topological graph with no $k\geq 3$ pairwise disjoint edges has at most $O(n\log^{4k-8}n)$ edges.  They conjectured that for every fixed $k$, the number of edges in such graphs is at most $O_k(n)$. Our next result substantially improves the upper bound for large $k$.

\begin{theorem}
\label{disjoint}
If $G = (V,E)$ is an $n$-vertex simple topological graph with no $k$ pairwise disjoint edges, then $|E(G)| \leq n(\log n)^{O( \log k)}$. 
\end{theorem}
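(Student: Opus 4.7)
The plan is to establish the recurrence
\[
f(n,k) \;\le\; C\,(\log n)\, f\!\bigl(n,\lceil k/2\rceil\bigr)
\]
for a universal constant $C$, where $f(n,k)$ denotes the maximum number of edges of an $n$-vertex simple topological graph with no $k$ pairwise disjoint edges. Iterating the recurrence $\lceil \log_2 k\rceil$ times and using the base case $f(n,2) \le 2n$ (the Lov\'asz--Pach--Szegedy bound on thrackles) gives $f(n,k)\le n(\log n)^{O(\log k)}$, as required.

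To prove the recurrence I would, given an extremal $G$, partition $E(G)$ into $L=O(\log n)$ subgraphs on $V(G)$, each with no $\lceil k/2\rceil$ pairwise disjoint edges; summing over the classes then yields the bound. The partition is built via Theorem~\ref{regularity}: first shorten each edge slightly near its two endpoints so that the edges form a family of pseudo-segments (no three through a point), then apply Theorem~\ref{regularity} with $\varepsilon=\varepsilon(k)$ a small constant, producing parts $\mathcal{C}_1,\ldots,\mathcal{C}_K$. Classify each pair $(\mathcal{C}_i,\mathcal{C}_j)$ as \emph{complete} (every edge of $\mathcal{C}_i$ crosses every edge of $\mathcal{C}_j$), \emph{empty} (no such crossing), or \emph{exceptional} (at most $\varepsilon K^2$ pairs in total), and form the 3-edge-coloured reduced graph $R$ on $[K]$.

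The key structural observation exploits the absence of $k$ pairwise disjoint edges in $G$: if many parts of $R$ were pairwise \emph{empty}, then one edge selected from each of them would give a large family of pairwise non-crossing edges, and a greedy selection (after controlling vertex degrees) would extract $k$ pairwise \emph{disjoint} edges, contradicting the hypothesis. This restricts the empty-structure in $R$ in a controlled way. Combining this with the bounded number of exceptional pairs and with known bounds on families of pairwise crossing edges (forced by the complete pairs), one distributes the edges of $G$ among $L=O(\log n)$ colour classes, each of which, viewed as a simple topological subgraph on $V(G)$, contains no $\lceil k/2\rceil$ pairwise disjoint edges.

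The main obstacle is that Theorem~\ref{regularity} distinguishes \emph{crossing} from \emph{non-crossing} pseudo-segments, whereas the hypothesis of Theorem~\ref{disjoint} concerns \emph{disjoint} edges, which are required to be both non-crossing and non-incident. Bridging this gap requires a careful treatment of vertices of large degree: the edges incident to such a vertex form a \emph{pseudo-fan} of pairwise non-crossing pseudo-segments sharing a common endpoint, which the regularity partition does not automatically separate from genuinely disjoint edges. A peeling argument that strips off $O(\log n)$ layers of high-degree vertices --- in the spirit of Pach--T\'oth --- handles this, and its cost is exactly the $\log n$ factor per halving of $k$ that is necessary for the target bound $n(\log n)^{O(\log k)}$.
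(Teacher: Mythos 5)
Your overall frame (a recursion in $k$ losing a polylog factor per halving, with the thrackle bound as base case) matches the paper's, but the mechanism you propose for the halving step has a genuine gap. Theorem~\ref{regularity} applied to the edge set gives a \emph{constant} number $K=K(\varepsilon)$ of parts, each containing a constant fraction of all edges, with no control whatsoever on the crossing pattern \emph{inside} a part and with up to $\varepsilon K^2$ exceptional pairs whose location you cannot choose. From this it does not follow that $E(G)$ can be distributed into $O(\log n)$ classes each free of $\lceil k/2\rceil$ pairwise disjoint edges: the reduced-graph observation (many pairwise empty parts give many pairwise non-crossing edges, hence $k$ disjoint edges by a matching/degree argument) constrains the reduced graph, but it says nothing about the huge sets of edge pairs inside parts or in exceptional pairs, and iterating the regularity lemma to control those would not stop after $O(\log n)$ rounds with the required property. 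The step ``one distributes the edges of $G$ among $L=O(\log n)$ colour classes, each of which contains no $\lceil k/2\rceil$ pairwise disjoint edges'' is exactly the heart of the matter and is asserted, not proved; likewise the ``peeling $O(\log n)$ layers of high-degree vertices'' is not a workable substitute for an actual argument separating non-crossing from disjoint.

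The paper takes a different and more robust route, and in particular it does \emph{not} prove your clean recurrence $f(n,k)\le C\log n\, f(n,\lceil k/2\rceil)$. It first passes to a bipartite subgraph and then splits into two cases according to the number of \emph{disjoint} pairs of edges. If there are many ($\ge |E|^2/\Theta(\log^6 n)$), it applies the density theorem for disjointness graphs of pseudo-segments (Theorem~\ref{appdis}, not the regularity lemma) to a bipartition of $E(G)$, producing two sets of size $|E|/(\log n)^{O(1)}$ that are completely disjoint from one another; one of them has no $k/2$ pairwise disjoint edges, which gives the induction on $k$ with a polylog loss. If there are few disjoint pairs, your proposal has no mechanism at all, whereas the paper uses the Pach--T\'oth redrawing trick to make all originally crossing pairs cross an even number of times, so the odd-crossing number is bounded by the (few) disjoint pairs plus the $\le 2|E|n$ adjacent pairs; Lemma~\ref{bisect} then yields a balanced cut of size at most $|E|/\log^2 n$, and the induction is on $n$, not on $k$. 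You would need to either supply this second case (or an equivalent) or give a genuine proof of your partition claim; as written, the argument does not go through.
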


In \cite{fox2}, Fox and Sudakov  showed that every dense $n$-vertex simple topological graph contains $\Omega(\log^{1 + \delta}n)$ pairwise disjoint edges, where $\delta \approx 1/40$.  As an immediate Corollary to Theorem \ref{disjoint}, we improve this bound to nearly polynomial under a much weaker assumption.

\begin{corollary}
Let $\varepsilon>0$, and let $G = (V,E)$ be an $n$-vertex simple topological graph with at least $2n^{1+\varepsilon}$ edges. Then $G$ has $n^{\Omega(\varepsilon /\log\log n)}$ pairwise disjoint edges.
\end{corollary}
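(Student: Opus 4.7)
The plan is to derive the corollary directly as the contrapositive of Theorem \ref{disjoint}. Let $C$ denote the absolute constant implicit in the exponent $O(\log k)$ of that theorem, so that any $n$-vertex simple topological graph with fewer than $k$ pairwise disjoint edges has at most $n(\log n)^{C\log k}$ edges. Given the graph $G$ from the hypothesis, with $|E(G)|\geq 2n^{1+\varepsilon}$, I would let $k$ be the maximum size of a pairwise disjoint collection of edges in $G$ and apply Theorem \ref{disjoint} to obtain the chain $2n^{1+\varepsilon}\leq |E(G)| \leq n(\log n)^{C\log k}$.

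From here it is a matter of inverting the resulting inequality to extract a lower bound on $k$. Dividing by $n$ and taking logarithms once yields $\varepsilon\log n+\log 2 \leq C(\log k)(\log\log n)$. For $n$ large enough that $\log\log n \geq 1$, this rearranges to $\log k \geq \varepsilon\log n/(C\log\log n)$, whence $k \geq n^{\varepsilon/(C\log\log n)} = n^{\Omega(\varepsilon/\log\log n)}$. For the remaining small values of $n$ the conclusion is vacuous or trivially satisfied by absorbing constants into the hidden $\Omega(\cdot)$, since any two disjoint edges already yield $k\geq 2$ and the hypothesis $|E(G)|\geq 2n^{1+\varepsilon}$ forces at least one such pair.

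There is no substantive obstacle: the content is entirely in Theorem \ref{disjoint}, and the corollary is the standard conversion of an edge-bound upper bound of the form $n(\log n)^{O(\log k)}$ into a forbidden-substructure lower bound of the form $k\geq n^{\Omega(\varepsilon/\log\log n)}$ whenever the edge count is at least $n^{1+\varepsilon}$. The only mild care required is to track that the doubly-logarithmic factor $\log\log n$ appears in the denominator of the exponent, which is exactly the loss one suffers when inverting the $(\log n)^{C\log k}$ dependence on $k$.
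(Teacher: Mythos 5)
Your proposal is correct and is exactly the paper's intended argument: the paper states this as an immediate corollary of Theorem \ref{disjoint} with no separate proof, and your inversion of the bound $n(\log n)^{C\log k}$ is the intended computation. The only cosmetic point is that with $k$ defined as the \emph{maximum} number of pairwise disjoint edges you should apply Theorem \ref{disjoint} with parameter $k+1$ (since $G$ does contain $k$ such edges), which changes nothing asymptotically.
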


\noindent For complete $n$-vertex simple topological graphs, Aichholzer et al.~\cite{aich} showed that one can always find $\Omega(n^{1/2})$ pairwise disjoint edges.  Whether or not this can be improved to $\Omega(n)$ is an open problem.
\smallskip

The proofs of the above theorems heavily rely on a bipartite Ramsey-type result for intersection graphs of pseudo-segments (Theorem~\ref{main}), as will be explained in Section~\ref{her}. At the end of Section \ref{her},  using a variant of Szemer\'edi's regularity lemma originally proposed by Koml\'os and S\'os~\cite{KS96, mat, pach98}, we show how this Ramsey-type result implies two ``density-type'' theorems (Theorems~\ref{old} and \ref{appdis}). Roughly speaking, we prove that if there are many (resp., few) crossings between two families of pseudo-segments, then they have two large subfamilies such that every member of the first subfamily crosses every member of the second (resp., no member of the first subfamily crosses any member of the second).  In Subsection \ref{strongeh}, we show that any finite collection of pseudo-segments in the plane contains a linear-sized subset with the property that only a small fraction of pairs in the subset are crossing, or nearly all of them cross. In Section~\ref{dbground}, we prove Theorem \ref{main} in the special case where one of the families is double grounded.  Building on these results, in Section \ref{highvslow}, we establish our bipartite Ramsey-type theorem (Theorem \ref{main}) for any two families of pseudo-segments with the property that for each family, only a small fraction of pairs are crossing, or nearly all of them cross.  Finally, in Section \ref{pfmain}, we prove Theorem \ref{main} in its full generality. We conclude our paper with a number of remarks and with an application of our results to an old problem in graph drawing (Theorem \ref{disjoint}).

\section{Ramsey-type properties of hereditary families of graphs}\label{her}

A family of graph is \emph{hereditary} if it is closed under taking induced subgraphs. In this section, we show that the property that in a hereditary family of graphs, a homogeneous regularity lemma holds, is equivalent to some seemingly weaker Ramsey-type properties. See Theorem~\ref{equivalences}, below. It will imply that in order to prove Theorem \ref{regularity}, it is enough to establish that the family of intersection graphs of pseudo-segments satisfies some bipartite Ramsey-type condition (Theorem~\ref{main}). 

We need the following definitions. 

\begin{definition} Let $\mathcal{F}$ be a  family of graphs. 
\begin{enumerate} 
\item $\mathcal{F}$ has the \emph{Erd\H{o}s-Hajnal property} if there is $\varepsilon=\varepsilon(\mathcal{F})>0$ such that every graph in $\mathcal{F}$ with $n$ vertices contains a clique or independent set of size $n^{\varepsilon}$.
\item  $\mathcal{F}$ has the \emph{polynomial R\"odl property} if there is $C=C(\mathcal{F})$ such that for every $\varepsilon>0$, every $n$-vertex graph in $\mathcal{F}$ has an induced subgraph on at least $\epsilon^{C(\mathcal{F})}n$ vertices that is $\varepsilon$-homogeneous. 
\item $\mathcal{F}$ has the \emph{strong Erd\H{o}s-Hajnal property} if there is a constant $\varepsilon=\varepsilon(\mathcal{F})>0$ such that the vertex set of every $n$-vertex graph in $\mathcal{F}$ with $n \geq 2$ has two disjoint subsets, each of size at least $\varepsilon n$, such that the bipartite graph between them is complete or empty. 
\item $\mathcal{F}$ has the \emph{mighty Erd\H{o}s-Hajnal property} if there is a constant $\varepsilon=\varepsilon(\mathcal{F})>0$ such that for every graph $G$ in $\mathcal{F}$ and every pair of disjoint subsets $A,B \subset V(G)$ with $|A|=|B|$, there are $A' \subset A$ and $B' \subset B$ with $|A'| \geq \varepsilon |A|$ and $|B'| \geq \varepsilon |B|$ such that the bipartite graph between $A$ and $B$ in $G$ is complete or empty. 
\item $\mathcal{F}$ has the \emph{homogeneous regularity property} if for every $\varepsilon>0$ there is $K=K_{\mathcal{F}}(\varepsilon)$ such that the vertex set of every graph $G \in \mathcal{F}$ has an equipartition 
$V(G)=V_1 \cup \cdots \cup V_K$ into $K$ parts such that for all but at most $\varepsilon K^2$ pairs of parts $(V_i,V_j)$, the bipartite graph between them is complete or empty (i.e., $V_i \times V_j \subset E(G)$ or $V_i \times V_j \cap E(G) = \emptyset$). 
\item $\mathcal{F}$ has the \emph{homogeneous density property} if for every $\varepsilon>0$ there is $C=C(\varepsilon)$ such that for every $G \in \mathcal{F}$ and every pair $A,B \subset V(G)$ of disjoint subsets of the same size with at least $\varepsilon |A||B|$ edges between them, there are $A' \subset A$ and $B' \subset B$ with $|A'| \geq \varepsilon^{C}|A|$ and $|B'| \geq \varepsilon^C |B|$ with the property that $A'\times B'\subset E(G).$ 
\end{enumerate}
\end{definition}

For any family of graphs, $\mathcal{F}$, let $\overline{\mathcal{F}}$ be the family consisting of the complements of the elements of $\mathcal{F}$. In the sequel, we will heavily use the following.

\begin{theorem}\label{equivalences}
Let $\mathcal{F}$ be a hereditary family of graphs. Then the following are equivalent  
\begin{enumerate}
    \item $\mathcal{F}$ has the mighty Erd\H{o}s-Hajnal property. 
    \item $\mathcal{F}$ has the homogeneous regularity property. 
    \item $\mathcal{F}$ and $\overline{\mathcal{F}}$ both have the homogeneous density property. 
\end{enumerate}
\end{theorem}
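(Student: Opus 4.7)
The plan is to establish the equivalences $(1)\Leftrightarrow(2)$ and $(1)\Leftrightarrow(3)$ directly, with mighty Erd\H{o}s-Hajnal as the pivot. The easy directions are $(2)\Rightarrow(1)$ and $(3)\Rightarrow(1)$. For $(2)\Rightarrow(1)$, given disjoint $A,B\subseteq V(G)$ with $|A|=|B|=m$, I would apply homogeneous regularity to $G[A\cup B]\in\mathcal{F}$ with a small parameter $\varepsilon'$, producing an equipartition $V_1,\ldots,V_K$. With $a_i=|V_i\cap A|$, $b_j=|V_j\cap B|$, the identity $\sum_{i,j}a_ib_j=m^2$ and the bound $\sum_{\text{bad }(i,j)}a_ib_j\leq\varepsilon'K^2(2m/K)^2=4\varepsilon'm^2$ yield a homogeneous pair $(V_i,V_j)$ with $a_ib_j\geq(1-4\varepsilon')m^2/K^2$, forcing $a_i,b_j\geq(1-4\varepsilon')m/(2K)$ since $a_i,b_j\leq 2m/K$. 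Then $(V_i\cap A,V_j\cap B)$ witnesses mighty EH with $\varepsilon=\Omega(1/K)$. For $(3)\Rightarrow(1)$, every pair $(A,B)$ has density at least $1/2$ either in $G$ or in $\overline{G}$; applying hom density in the appropriate family produces a homogeneous subpair of size $(1/2)^{C(1/2)}m$, i.e.\ mighty EH with a universal constant.

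For $(1)\Rightarrow(3)$, I would run a density-increment iteration using the mighty EH constant $\varepsilon_0$. Starting from $(A_0,B_0)=(A,B)$ with density $d_0\geq\varepsilon$, inductively build $(A_t,B_t)$ of equal size $m_t$ as follows: apply mighty EH to $(A_t,B_t)$; if the output is a complete subpair, return it. Otherwise the empty witness $(A^*,B^*)$ splits $(A_t,B_t)$ into four rectangles of which three together hold all $d_tm_t^2$ edges within total area $(1-\varepsilon_0^2)m_t^2$. Select the rectangle of maximum density, which has density at least $d_t/(1-\varepsilon_0^2)$, and randomly subsample the longer side to restore equal sizes, obtaining $(A_{t+1},B_{t+1})$ with $d_{t+1}\geq d_t/(1-\varepsilon_0^2)$. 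After $t^*=O(\varepsilon_0^{-2}\log(1/\varepsilon))$ rounds the density exceeds $1-\varepsilon_0^2/2$, so fewer than $\varepsilon_0^2m_{t^*}^2/2$ non-edges remain; since any empty subpair of relative area $\varepsilon_0^2$ would require $\varepsilon_0^2m_{t^*}^2$ non-edges, mighty EH must now return a complete subpair, of size $\varepsilon_0m_{t^*}\geq\varepsilon^{O(\log(1/\varepsilon_0)/\varepsilon_0^2)}m$, which is polynomial in $\varepsilon$ as required. The identical argument applied in $\overline{G}$ yields hom density for $\overline{\mathcal{F}}$.

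The main obstacle is $(1)\Rightarrow(2)$, a homogeneous strengthening of Szemer\'edi's regularity lemma. I would follow the energy-increment template: define $q(\mathcal{P})=\sum_{i,j}d(V_i,V_j)^2|V_i||V_j|/|V(G)|^2\in[0,1]$, and starting from the trivial partition, while more than $\varepsilon K^2$ pairs $(V_i,V_j)$ are non-homogeneous, apply mighty EH to each such pair to extract a homogeneous witness subpair of relative size $\varepsilon_0$, and refine $\mathcal{P}$ by all witness sets simultaneously inside each cell. A defect Cauchy-Schwarz calculation shows that a non-homogeneous pair of density $d$ gains energy at least $\varepsilon_0^2\min(d,1-d)^2|V_i||V_j|/|V(G)|^2$, so \emph{moderate} pairs with $\min(d,1-d)\geq\varepsilon_0^2/2$ each contribute $\Omega(\varepsilon_0^6/K^2)$, for a total gain of $\Omega(\varepsilon\varepsilon_0^6)$ per round when such pairs dominate. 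The delicate step is handling \emph{extreme} pairs with $d<\varepsilon_0^2/2$ or $d>1-\varepsilon_0^2/2$, where the variance gain degenerates; these are already nearly homogeneous and are cleaned up in a final phase by iterating mighty EH inside each one to isolate the few edges (or non-edges) into small exceptional sub-cells, at the cost of only $O(\varepsilon)K^2$ additional bad pairs after merging. Since $q\leq 1$ the refinement phase halts in $O(\varepsilon^{-1}\varepsilon_0^{-6})$ rounds with $K$ of tower type in $\varepsilon$, and a standard final step (merging undersized cells and evenly subdividing oversized ones) delivers the desired equipartition.
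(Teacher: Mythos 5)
Your directions $(2)\Rightarrow(1)$ and $(3)\Rightarrow(1)$ are fine and essentially match the paper. The problems are in the two directions out of the mighty Erd\H{o}s--Hajnal property. In $(1)\Rightarrow(3)$, the density-increment iteration has a concrete step that fails: after removing the empty witness rectangle $A^*\times B^*$, you select the \emph{maximum-density} of the three remaining rectangles, but nothing controls its dimensions. For instance, if $A\setminus A^*$ is a single vertex joined completely to $B^*$, that $1\times|B^*|$ rectangle has density $1$ and wins the selection while carrying a negligible share of the edges; then $m_{t+1}$ collapses to $1$ and the claimed bound $m_{t^*}\ge \varepsilon^{O(\log(1/\varepsilon_0)/\varepsilon_0^2)}m$ does not follow. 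The natural repair (discard rectangles with a side shorter than $\sim d_t m_t$ before maximizing density) only guarantees a per-step size loss of order $d_t$, and since $d_t\approx\varepsilon$ for the first $\Theta(\varepsilon_0^{-2})$ rounds, the product of the losses over the $\Theta(\varepsilon_0^{-2}\log(1/\varepsilon))$ rounds is only $\varepsilon^{\Theta(\varepsilon_0^{-2}\log(1/\varepsilon))}$ --- quasi-polynomial, not the polynomial dependence that the homogeneous density property demands. This is precisely the point where the paper instead invokes the Koml\'os--S\'os lemma (Lemma~\ref{sz}): it produces in one shot subsets $A_0,B_0$ of size $\varepsilon^{1/c^2}n$ with density $\ge\varepsilon$ in which every pair of $c$-fraction subsets meets, so a single application of the mighty Erd\H{o}s--Hajnal property must return a complete pair. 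Your argument would need either that lemma or an increment scheme with per-step size loss independent of $d_t$; as written it is not a proof.

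In $(1)\Rightarrow(2)$, the energy-increment skeleton is sound only for the ``moderate'' pairs; the pairs of density within $\varepsilon_0^2/2$ of $0$ or $1$ (which can be \emph{all} of the non-homogeneous pairs) yield no usable energy gain, and your treatment of them is a single unsubstantiated sentence. Isolating the few edges of a sparse-but-nonempty pair into ``small exceptional sub-cells'' cannot be done by refining into boundedly many parts of proportional size unless one again runs a mighty-EH peeling iteration inside each such pair and then re-aligns all the resulting cells into one equipartition --- which is exactly the content of the paper's proof, not a routine cleanup, and your claim that this costs only $O(\varepsilon)K^2$ bad pairs after merging is not justified. Note that the paper's Lemma~2.4 shows the energy increment is unnecessary: it iterates the mighty Erd\H{o}s--Hajnal property on a family of disjoint block pairs, tracking the fraction of vertex pairs not yet covered by homogeneous blocks, which decays by a factor $(1-c^2)$ per round regardless of densities; after $O(c^{-2}\log(1/\varepsilon))$ rounds one takes the common refinement and equipartitions. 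That route handles your ``extreme'' pairs automatically and avoids the tower-type bookkeeping altogether.
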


%Some of these equivalences were implicit in previous works (which ones?). 
Before proving Theorem \ref{equivalences}, we make some remarks about the relative strengths of the above properties. 

A classical result of Erd\H{o}s and Szekeres~\cite{ErSz35} states that every graph on $n$ vertices has a clique or independent set on $\frac{1}{2}\log_2 n$ vertices. The first improvement on the constant $\frac12$ was found recently by Campos, Griffiths, Morris, and Sahasrabudhe \cite{CGMS}. From the other direction, Erd\H{o}s~\cite{Er47} showed that for every integer $n>2$, there is a graph on $n$ vertices that contains no clique or independent set on $2\log_2 n$ vertices. 

It is known that this result can be substantially improved in several restricted settings; see, e.g., \cite{Chudnovsky,FP08}. According to the famous \emph{Erd\H{o}s-Hajnal conjecture} \cite{EH89}, which has been established only in some special cases, every hereditary family of graphs that is not the family of all graphs, satisfies the Erd\H{o}s-Hajnal property. 

R\"odl \cite{Rodl} proved that for any hereditary family $\mathcal{F}$ of graphs that is missing a graph and any $\varepsilon>0$, there is $\delta=\delta_{\mathcal{F}}(\varepsilon)>0$ such that any graph in $\mathcal{F}$ on $n$ vertices contains an induced subgraph on at least $\delta n$ vertices which is $\varepsilon$-homogeneous. R\"odl's proof used Szemer\'edi's regularity lemma and consequently gives a weak quantitative bound. Better quantitative bounds were obtained more recently in \cite{fs08,bnss}. 

Fox and Sudakov \cite{fs08} conjectured that a polynomial dependence holds in R\"odl's theorem. That is, every hereditary family of graphs that is not the family of all graphs, satisfies the polynomial R\"odl property. 
It is a simple exercise that any family having the polynomial R\"odl property also has the Erd\H{o}s-Hajnal property. In particular, the conjecture of Fox and Sudakov is a strengthening of the Erd\H{o}s-Hajnal conjecture. Many cases of this strengthening of the Erd\H{o}s-Hajnal conjecture were recently verified in \cite{fnss,nss}. 

One way to prove that a family of graphs satisfies the Erd\H{o}s-Hajnal property is to show that it has the \emph{strong} Erd\H{o}s-Hajnal property. However, this technique is limited, as the strong Erd\H{o}s-Hajnal property is strictly stronger than the Erd\H{o}s-Hajnal property. For example, it was proved by Tomon~\cite{tomon} that the family of intersection graphs of curves in the plane (the family of \emph{``string graphs''}) has the Erd\H{o}s-Hajnal property, yet, as was shown by Pach and G. T\'oth \cite{PT}, it does not have the strong Erd\H{o}s-Hajnal property. Interestingly, the family of  intersection graphs of curves in the plane, each pair of which have a \emph{bounded number of intersections}, has the strong Erd\H os-Hajnal property \cite{fpt}. The same is true for intersection graphs of $x$-monotone curves or convex sets in the plane \cite{fpt10}. 

In Lemma \ref{strongEHimpliesPR} below, we prove that every hereditary family of graphs that has the strong Erd\H{o}s-Hajnal property must also have the polynomial R\"odl property. The strong Erd\H{o}s-Hajnal property is strictly stronger than the polynomial R\"odl property, as it is a simple exercise to show that the family of triangle-free graphs has the polynomial R\"odl property but does not have the strong Erd\H{o}s-Hajnal property.

However, for some applications, even the strong Erd\H{o}s-Hajnal property is not sufficient, and one needs to introduce a stronger property: the \emph{mighty} Erd\H{o}s-Hajnal property, which is a bipartite version of the strong Erd\H os-Hajnal property. The simplest example of a class of graphs that has the strong property, but not the mighty one, is the family of bipartite graphs. A more interesting example is the family of intersection graphs of \emph{convex sets} in the plane, which has the strong Erd\H os-Hajnal property by \cite{fpt10}, but not he mighty Erd\H os-Hajnal property as it contains the complement of every bipartite graph.  On the other hand, it was proved by Pach and Solymosi \cite{solymosi,solymosi2} that intersection graphs of segments, or, more generally, intersection graphs of semialgebraic sets of bounded description complexity, have even the mighty Erd\H{o}s-Hajnal property \cite{APPRS}.

\subsection{Proof of Theorem \ref{equivalences}}
We split the proof of the equivalences in Theorem \ref{equivalences} into four smaller lemmas. 

\begin{lemma}
Suppose that a hereditary family $\mathcal{F}$ of graphs has the homogeneous regularity property. Then $\mathcal{F}$ also has the mighty Erd\H{o}s-Hajnal property. \end{lemma}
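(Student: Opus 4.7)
The plan is to apply the homogeneous regularity property directly to the induced subgraph $G[A\cup B]$, which lies in $\mathcal{F}$ by heredity, and then extract a homogeneous pair of parts, one with a large footprint in $A$ and one with a large footprint in $B$. Fix a small absolute constant $\varepsilon_0>0$ (to be chosen later), set $K:=K_{\mathcal{F}}(\varepsilon_0)$, and let $V_1,\dots,V_K$ be the equipartition of $A\cup B$ guaranteed by the homogeneous regularity property, so all but at most $\varepsilon_0 K^2$ pairs $(V_i,V_j)$ are homogeneous. We may assume that $|A|=|B|=n$ is much larger than $K$, since otherwise $A'$ and $B'$ can be taken to be single vertices.

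Next, I would use a standard averaging argument to show that many parts meet both $A$ and $B$ in a constant fraction of $|V_i|\approx 2n/K$. Specifically, let $I:=\{i:|V_i\cap A|\ge n/(2K)\}$ and $J:=\{j:|V_j\cap B|\ge n/(2K)\}$. Since $\sum_i|V_i\cap A|=n$ and each $|V_i\cap A|$ is at most $|V_i|\le 2n/K+1\le 3n/K$ (for $n$ large), the parts outside $I$ contribute at most $n/2$ to the total, so $|I|\ge K/6$, and symmetrically $|J|\ge K/6$. Therefore the number of ordered pairs $(i,j)$ with $i\in I$, $j\in J$ and $i\ne j$ is at least $K^2/36 - K/6$, which exceeds $\varepsilon_0 K^2$ as soon as $\varepsilon_0<1/72$ and $K$ is larger than an absolute constant. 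We can guarantee the latter by choosing $\varepsilon_0$ small enough or, if the given $K$ is too small, refining the equipartition to artificially enlarge it.

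It follows that some pair $(V_i,V_j)$ with $i\in I$, $j\in J$, $i\ne j$ is homogeneous; set $A':=V_i\cap A$ and $B':=V_j\cap B$. Then $|A'|\ge n/(2K)$ and $|B'|\ge n/(2K)$, and since $A'\times B'\subset V_i\times V_j$, the bipartite graph between $A'$ and $B'$ is again complete or empty. This yields the mighty Erd\H{o}s--Hajnal property with constant $\varepsilon(\mathcal{F}):=1/(2K_{\mathcal{F}}(\varepsilon_0))$, depending only on $\mathcal{F}$.

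The main substantive step is not the argument itself, which is essentially a double-counting exercise, but the initial move of applying the homogeneous regularity property to the induced subgraph on $A\cup B$; this is where the hereditary assumption is used crucially, because $A\cup B$ is an arbitrary vertex subset of $G$ and we need membership in $\mathcal{F}$ to invoke the regularity conclusion. The only other place where care is needed is handling the degenerate regime $n\lesssim K$, but that regime is essentially trivial since $\varepsilon(\mathcal{F})$ is a fixed constant and the conclusion allows $|A'|=|B'|=1$ whenever $\varepsilon(\mathcal{F})\,n\le 1$.
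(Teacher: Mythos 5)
Your proposal is correct and follows essentially the same route as the paper: use heredity to apply the homogeneous regularity property to the induced subgraph on $A\cup B$, count (by averaging) the parts meeting $A$, respectively $B$, in at least $n/(2K)$ vertices, and observe that with $\varepsilon_0$ a small absolute constant some homogeneous pair of such parts survives, yielding $A'=V_i\cap A$ and $B'=V_j\cap B$. Your treatment is in fact slightly more careful than the paper's on the $i\ne j$ issue and the degenerate regimes (small $n$, small $K$), which the paper glosses over by asserting $K>100$.
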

\begin{proof} 
Let $G \in \mathcal{F}$ and $A,B\subset V(G)$ be disjoint with $|A|=|B|=n$. Let $G'$ be the induced subgraph of $G$ with vertex set $A \cup B$. Since $\mathcal{F}$ is hereditary and $G'$ is an induced subgraph of $G \in \mathcal{F}$, then $G' \in \mathcal{F}$. By the homogeneous regularity property of $G'$ applied with $\varepsilon=1/65$ there is an absolute constant $K > 100$ and an equipartition of $A \cup B$ into $K$ parts $V_1,\ldots,V_K$ such that all but $\varepsilon K^2$ pairs of parts $(V_i,V_j)$ are complete or empty. Note that the parts $V_i$ that have at most $n/(2K)$ elements in $A$ together have at most $K \cdot n/(2K)=n/2$ elements from $A$. Each part $V_i$ has at most $\lceil 2n/K \rceil \leq 4n/K$ elements. So there at least $(n/2)/(4n/K)=K/8$ parts $V_i$ that have at least $n/(2K)$ elements from $A$. Similarly, at least $K/8$ parts have at least $n/(2K)$ elements from $B$. So there are $K^2/64$ pairs of parts $(V_i,V_j)$ such that $V_i$ has at least $n/(2K)$ elements from $A$ and $V_j$ has at least $n/(2K)$ elements from $B$. Since we chose $\varepsilon=1/200$, there are at least $K^2/64-K^2/65 > 0$ pairs of parts $(V_i,V_j)$ that are complete or empty to each other, $V_i$ contains at least $n/(2K)$ elements from $A$ and $V_j$ contains at least $n/(2K)$ elements of $B$.  Fixing such a pair $(V_i,V_j)$ and letting $A'=V_i \cap A$ and $B'=V_j \cap B$, we have $A'$ and $B'$ are complete or empty to each other and are of the desired size. So $\mathcal{F}$ has the mighty Erd\H{o}s-Hajnal property. 
\end{proof}

\begin{lemma}
Suppose that a hereditary family $\mathcal{F}$ of graphs has the mighty Erd\H{o}s-Hajnal property. Then $\mathcal{F}$ also has the homogeneous regularity property. \end{lemma}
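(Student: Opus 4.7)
The plan is to establish the homogeneous regularity property by iteratively refining an equipartition using the mighty Erd\H{o}s-Hajnal property, tracking the ``bad mass'' as a monotone-decreasing potential.

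Let $\eta = \varepsilon(\mathcal{F}) > 0$ be the mighty Erd\H{o}s-Hajnal constant and fix $\varepsilon > 0$. For a partition $\mathcal{P} = \{V_1, \ldots, V_k\}$ of $V(G)$, call a pair $(V_i, V_j)$ \emph{bad} if the bipartite graph between $V_i$ and $V_j$ is neither complete nor empty, and define the bad mass $\mu(\mathcal{P}) = \sum_{(i,j) \text{ bad}} |V_i||V_j|$. Starting from the trivial equipartition, at iteration $t$ I would work with an equipartition $\mathcal{P}_t$ into $k_t$ parts of common size $s_t$ and, for every bad pair $(V_i, V_j)$, apply the mighty Erd\H{o}s-Hajnal property (valid since $|V_i| = |V_j|$) to obtain $A_{ij} \subset V_i$ and $B_{ji} \subset V_j$, each of size at least $\eta s_t$, such that the bipartite graph between $A_{ij}$ and $B_{ji}$ is complete or empty. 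I would then refine each $V_i$ by the common refinement induced by the family $\{A_{ij}, B_{ji}\}_{j : (V_i, V_j) \text{ bad}}$, producing at most $4^{k_t}$ atoms per part, and re-equipartition by subdividing every atom into blocks of a suitably small common size $s_{t+1}$.

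The key observation is that for each originally bad pair $(V_i, V_j)$, the $|A_{ij}||B_{ji}| \geq \eta^2 s_t^2$ vertex-pairs inside $A_{ij} \times B_{ji}$ end up in sub-pairs of blocks inheriting the homogeneity of $(A_{ij}, B_{ji})$, hence are no longer bad. Consequently each originally bad pair contributes at most $(1-\eta^2)|V_i||V_j|$ to $\mu(\mathcal{P}_{t+1})$, giving $\mu(\mathcal{P}_{t+1}) \leq (1-\eta^2)\mu(\mathcal{P}_t)$; discarding leftover vertices during the re-equipartition can only decrease $\mu$ further. After $T = \lceil \log(2/\varepsilon)/\eta^2 \rceil$ iterations, $\mu(\mathcal{P}_T) \leq (\varepsilon/2) n^2$, and since each bad pair in the final equipartition into $K = n/s_T$ parts of size $s_T$ contributes exactly $s_T^2 = n^2/K^2$ to $\mu$, the number of bad pairs is at most $\varepsilon K^2/2 \leq \varepsilon K^2$, as required.

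The main obstacle is the bookkeeping for the re-equipartition step: the common refinement produces atoms of wildly different sizes, and choosing $s_{t+1}$ too large forces us to discard substantial atoms (breaking the $(1-\eta^2)$-reduction), while $s_{t+1}$ too small makes the part count explode. I would resolve this by choosing $s_{t+1}$ small enough that the discarded mass is at most an $\varepsilon/(4T)$ fraction of $n^2$ per iteration, which amounts to taking $s_{t+1}$ as a controlled (exponentially small in $k_t$) multiple of $s_t$. This gives a tower-type dependence of the final $K = K_{\mathcal{F}}(\varepsilon)$ on $1/\varepsilon$, of height $O(\log(1/\varepsilon)/\eta^2)$, comparable to the usual quantitative bound in Szemer\'edi's regularity lemma.
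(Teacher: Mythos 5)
Your overall strategy (iterate the mighty Erd\H{o}s--Hajnal property and drive a ``bad mass'' potential down geometrically, accepting a tower-type bound on $K$) is the same in spirit as the paper's proof, but the central recursion is not correct as stated. When you pass from $\mathcal{P}_t$ to $\mathcal{P}_{t+1}$ you create, in addition to the sub-pairs of formerly inter-part pairs, all the pairs of blocks lying inside the \emph{same} old part $V_i$; these have total mass about $\sum_i |V_i|^2 = n^2/k_t$, they were not counted in $\mu(\mathcal{P}_t)$, and nothing forces them to be homogeneous. So the inequality $\mu(\mathcal{P}_{t+1}) \leq (1-\eta^2)\mu(\mathcal{P}_t)$ fails in general; the correct statement is $\mu(\mathcal{P}_{t+1}) \leq (1-\eta^2)\mu(\mathcal{P}_t) + n^2/k_t + (\text{leftover mass})$. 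Relatedly, starting from the trivial equipartition the process never gets off the ground: with one part there are no pairs at all, hence no bad pairs to refine. The repair is to start from an equipartition into $k_0 = \Theta(1/\varepsilon)$ parts and to observe that, since $k_t$ grows at least geometrically, the extra diagonal terms sum to $O(n^2/k_0) = O(\varepsilon n^2)$ --- but this is an additional idea your write-up does not contain, and it is exactly the role played in the paper's proof by the initial partition into $t = 4/\varepsilon$ parts and by the explicit ``coverage'' bookkeeping (their properties 2 and 3), which charges pairs falling inside a common cell to a separate, summable error.

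Two smaller points. First, you cannot literally ``discard'' leftover vertices: the homogeneous regularity property requires an equipartition of all of $V(G)$, so the leftovers must be repackaged into blocks of the partition and every pair meeting such a junk block must be charged as potentially bad; your bound of an $\varepsilon/(4T)$ fraction of $n^2$ per iteration does make this work, so this is only a matter of phrasing (the paper does the analogous repackaging when it converts the common refinement $\mathcal{Q}$ into exactly $K$ parts). Second, the final number of parts must be a function $K_{\mathcal{F}}(\varepsilon)$ only, so you should fix the splitting ratios $s_{t+1}/s_t$ in advance (using the worst-case atom count $2^{k_t}$) rather than letting them depend on the graph, and handle the usual divisibility and $n \leq K$ technicalities as the paper does. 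With the diagonal-mass correction and these adjustments your argument goes through, and it is then essentially a re-bookkeeping of the paper's proof: you carry an actual equipartition at every step, whereas the paper carries a family of disjoint homogeneous-seeking rectangles and only builds the equipartition once at the end.
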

\begin{proof}

Since $\mathcal{F}$ has the mighty Erd\H{o}s-Hajnal property, there is $c>0$ such that for every graph $G_0$ in $\mathcal{F}$ and for all disjoint vertex subsets $A,B \subset V(G_0)$ with $|A|=|B|$, there is $A' \subset A$ and $B' \subset B$ with $|A'| \geq c|A|$ and $|B'| \geq c|B|$ such that the pair $(A',B')$ is homogeneous. 

We will show that $\mathcal{F}$ also has the homogeneous regularity property. Let $G \in \mathcal{F}$ have $n$ vertices. We will show that $G$ has an equipartition into $K$ parts such that the bipartite graph between all but at most $\varepsilon K^2$ pairs of parts are complete or empty. If $n \leq K$, then partition $V(G)$ into $K$ sets which have size at most one. Then every pair of parts is complete or empty between them. So we may assume $n>K$. 

Arbitrarily equipartition $V(G)=V_1 \cup \ldots \cup V_t$ into $t=4/\varepsilon$ sets, so at most a fraction $\varepsilon/4$ of the pairs of vertices are in the same part in the partition. Delete one vertex from each part $V_i$ that has size $\lfloor n/t \rfloor+1$ so all parts have the same size. Still, all but at least an $\varepsilon/3$ fraction of the pairs of vertices go between these pairs of sets of the same. Let $\mathcal{B}_0=\{(V_i,V_j)\}_{1 \leq i < j \leq t}$. 

Let $\varepsilon'=c^2\varepsilon/100$ At each step $i$ of a process we will have a family $\mathcal{B}_i$  of pairs of disjoint vertex subsets all of the same size satisfying the following four properties: 
\begin{enumerate} 
\item $|\mathcal{B}_i| \leq t^2(1/\varepsilon')^{2i}$. 
\item Each pair of vertices of $G$ go between at most one pair of vertex subsets of $\mathcal{B}_i$. 
\item All but a fraction $(1-2\varepsilon')^{2i}$ of the pairs of vertices that go between a pair of vertex subsets in $\mathcal{B}_0$ go between a pair of vertex subsets in $\mathcal{B}_i$. 
\item At most a $(1-c^2)^i$ fraction of the pairs of vertices of the graph go between a pair of vertex subsets in $\mathcal{B}_i$ that are not homogeneous. 
\end{enumerate} 

Note that the above properties hold for $i=0$. Suppose we have already found $\mathcal{B}_i$. For each pair $(A,B) \in \mathcal{B}_i$, by the mighty Erd\H{o}s-Hajnal property, there are subsets $A' \subset A$ and $B' \subset B$ with $|A'| \geq c|A|$ and $|B'| \geq c|B|$ and the bipartite graph between $A$ and $B$ is complete or empty. Partition each of the four sets $A'$, $A \setminus A'$, $B'$ and $B \setminus B'$ into parts of size $\varepsilon' |A|$ with one possible remaining set. For each pair of these subsets of $A$ and $B$ of size $\varepsilon'|A|$ such that one is a subset of $A$ and the other is a subset of $B$, we place the pair of subsets in $\mathcal{B}_{i+1}$. 

We next observe that each of the four properties listed hold. For each pair $(A,B) \in \mathcal{B}_i$, we get at most $(1/\varepsilon')^2$ pairs in $\mathcal{B}_{i+1}$, so $|\mathcal{B}_{i+1}| \leq (1/\varepsilon')^2|\mathcal{B}_i|$, which inductively gives the first claimed property. The second claimed property holds since every pair of vertices that go between a pair of vertex subsets in $\mathcal{B}_{i+1}$ also go between a pair of vertex subsets in $\mathcal{B}_i$. At least a $(1-2\varepsilon')^2$-fraction of the pairs of vertices in $A \times B$ go between pairs of vertices in $\mathcal{B}_{i+1}$, which inductively gives the third claimed property. Finally, for each $(A,B) \in \mathcal{B}_i$, since at least a $(1-c)^2$ fraction of the pairs of vertices go between the homogeneous pair $(A',B')$, the last claimed property holds. 

After $i_0=c^{-2}\log(4/\varepsilon)$ steps, all but a fraction $(1-c^2)^{i_0}<e^{-c^2 i_0} =\varepsilon/4$ of the pairs of vertices in $G$ go between a pair of vertex subsets in $\mathcal{B}_{i_0}$ that 
are not homogeneous. The fraction of pairs of vertices in $G$ that do no go between a pair of vertex subsets in $\mathcal{B}_{i_0}$ is at most $\varepsilon/3 + 1- (1-2\varepsilon')^{2i_0} \leq \varepsilon/2$.

For each pair $(A,B) \in \mathcal{B}_{i_0}$, we obtain two bipartitions $V(G)=A \cup (V(G) \setminus A)$ and $V(G)=B \cup (V(G) \setminus B)$. Let $\mathcal{Q}$ be the common refinement of all of these $2|\mathcal{B}_{i_0}|$ bipartitions, so $\mathcal{Q}$ is a partition of $V(G)$ into $2^{2|\mathcal{B}_{i_0}}|$ parts.

We next obtain an equipartition $\mathcal{P}$ of $V(G)$ into $K$ parts. For every equipartition into $K$ parts, we have parts of size $\lfloor n/K \rfloor$ or $\lceil n/K \rceil$, with the number of parts of size $\lceil n/K \rceil$ being the remainder when $n$ is divided by $K$. Partition each part of $\mathcal{Q}$ into parts of size $\lfloor n/K \rfloor$ or $\lceil n/K \rceil$, with additional one part of size at most $n/K$. We partition the union of the $|Q|$ additional parts into sets of size $\lfloor n/K \rfloor$ or $\lceil n/K \rceil$ so that to obtain an equipartition $P$ of $V(G)$ into $K$ parts. The fraction of pairs of vertices in $G$ not going between a homogeneous pair in partition $P$ is at most $\varepsilon/2 +\varepsilon/4+2|Q|/K \leq \varepsilon$. Thus, $P$ is the desired equipartition. 
\end{proof}

\begin{lemma}\label{hdpimplymighty}
Suppose that $\mathcal{F}$ and $\overline{\mathcal{F}}$ are hereditary families that both have the homogeneous density property. Then $\mathcal{F}$ also has the mighty Erd\H{o}s-Hajnal property. 
\end{lemma}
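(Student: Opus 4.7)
The plan is to prove this by a direct density dichotomy, using the hypothesis on $\mathcal{F}$ in one case and on $\overline{\mathcal{F}}$ in the other. Let $G \in \mathcal{F}$ and let $A, B \subset V(G)$ be disjoint with $|A|=|B|=n$. Let $e(A,B)$ denote the number of edges of $G$ with one endpoint in $A$ and one in $B$. Either $e(A,B) \geq \tfrac{1}{2}|A||B|$, or the non-edges between $A$ and $B$ number at least $\tfrac{1}{2}|A||B|$, which is the same as saying the bipartite graph between $A$ and $B$ in the complement $\overline{G}$ has at least $\tfrac{1}{2}|A||B|$ edges.

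In the first case, I would apply the homogeneous density property of $\mathcal{F}$ directly to $G$ with $\varepsilon_0 = 1/2$ to obtain $A' \subset A$ and $B' \subset B$ with $|A'|, |B'| \geq (1/2)^{C_\mathcal{F}(1/2)} n$ and $A' \times B' \subset E(G)$, i.e., the bipartite graph between $A'$ and $B'$ is complete. In the second case, since $\mathcal{F}$ is hereditary, $\overline{G} \in \overline{\mathcal{F}}$; I would then apply the homogeneous density property of $\overline{\mathcal{F}}$ to $\overline{G}$ with $\varepsilon_0 = 1/2$ to produce $A' \subset A$ and $B' \subset B$ with $|A'|, |B'| \geq (1/2)^{C_{\overline{\mathcal{F}}}(1/2)} n$ and $A' \times B' \subset E(\overline{G})$, so that $A' \times B'$ is disjoint from $E(G)$ and the bipartite graph between $A'$ and $B'$ in $G$ is empty.

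Setting $\varepsilon(\mathcal{F}) = (1/2)^{\max\{C_\mathcal{F}(1/2),\, C_{\overline{\mathcal{F}}}(1/2)\}}$ yields the mighty Erd\H{o}s-Hajnal constant, since in both cases one obtains subsets $A', B'$ of size at least $\varepsilon(\mathcal{F}) \cdot n$ which are homogeneous to each other. The argument is essentially free of obstacles: the only point to verify is that the hypotheses of the two density properties apply in the respective cases, which follows from the trivial density dichotomy and from the observation that because $\mathcal{F}$ is hereditary, so is $\overline{\mathcal{F}}$, ensuring $\overline{G}$ belongs to the family on which the second density property is assumed.
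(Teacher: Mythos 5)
Your proposal is correct and follows essentially the same route as the paper: split on whether the bipartite density between $A$ and $B$ is at least $1/2$, apply the homogeneous density property of $\mathcal{F}$ (with $\varepsilon=1/2$) in the dense case, and apply it for $\overline{\mathcal{F}}$ to the complement in the sparse case to get an empty pair. The only cosmetic remark is that $\overline{G}\in\overline{\mathcal{F}}$ holds by the definition of $\overline{\mathcal{F}}$ rather than by hereditariness, but this does not affect the argument.
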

\begin{proof}
Let $G \in \mathcal{F}$ and $A,B$ be disjoint vertex subsets of $G$ of the same size. The bipartite graph between $A$ and $B$ have edge density at least $1/2$ or at most $1/2$. In the first case, since $\mathcal{F}$ has the homogeneous density property, there is a constant $c>0$ and subsets $A' \subset A$ with $|A'| \geq c|A|$ and  $B' \subset B$ with $|B'| \geq c|B|$ such that $A'$ is complete to $B'$.  In the second case, since $\overline{\mathcal{F}}$ has the homogeneous density property, there is a constant $c>0$ and subsets $A' \subset A$ with $|A'| \geq c|A|$ and  $B' \subset B$ with $|B'| \geq c|B|$ such that $A'$ is empty to $B'$. So we get $\mathcal{F}$ has the mighty Erd\H{o}s-Hajnal property.   
\end{proof}

\begin{lemma}\label{mightytodensity}
Suppose that a hereditary family $\mathcal{F}$ has the mighty Erd\H{o}s-Hajnal property. Then $\mathcal{F}$ and $\overline{\mathcal{F}}$ both have the homogeneous density property. 
\end{lemma}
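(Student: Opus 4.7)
The plan is to show that $\mathcal{F}$ itself has the homogeneous density property by iteratively applying the mighty Erd\H{o}s-Hajnal property to force the ``empty'' outcome to become impossible; the same argument applied to $\overline{\mathcal{F}}$ then handles the other half of the lemma, since mighty Erd\H{o}s-Hajnal is preserved under complementation (a pair of disjoint vertex subsets is homogeneous in $G$ if and only if it is homogeneous in $\overline{G}$, and $\overline{\mathcal{F}}$ is hereditary whenever $\mathcal{F}$ is). So fix $\varepsilon > 0$, $G \in \mathcal{F}$, and disjoint $A, B \subset V(G)$ with $|A| = |B| = n$ and edge density at least $\varepsilon$ between them, and let $c = c(\mathcal{F}) > 0$ be the mighty Erd\H{o}s-Hajnal constant, which we may assume is at most $1/2$.

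I will iteratively construct a nested sequence of square sub-pairs $(A_0, B_0) = (A, B), (A_1, B_1), (A_2, B_2), \ldots$ maintaining two invariants at each step $t$: $|A_t| = |B_t| \geq c^t n$, and the edge density $\delta_t$ between $A_t$ and $B_t$ satisfies $\delta_t \geq \varepsilon/(1-c^2)^t$. At step $t$, apply the mighty Erd\H{o}s-Hajnal property to $(A_t, B_t)$ and, after shrinking to subsets of the guaranteed sub-pair, obtain a homogeneous pair $(A_t', B_t')$ of size $\lceil c |A_t| \rceil$ each. If this pair is complete, we are done: it is a complete pair in $(A, B)$ of size at least $c^{t+1} n$. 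Otherwise $A_t' \times B_t'$ is empty, so every edge of $(A_t, B_t)$ lies in the disjoint union $R_1 \cup R_2$, where $R_1 = (A_t \setminus A_t') \times B_t$ and $R_2 = A_t' \times (B_t \setminus B_t')$, of combined area at most $(1-c^2)|A_t|^2$. By averaging, one of $R_1, R_2$ has edge density at least $\delta_t/(1-c^2)$; moreover, since $c \leq 1/2$, its shorter side has length at least $c|A_t|$. A second averaging argument over subsets of the longer side yields a subset, of size equal to the shorter side, whose restriction to the denser rectangle has edge density at least $\delta_t/(1-c^2)$, and this restriction becomes the next square sub-pair $(A_{t+1}, B_{t+1})$.

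Since $\delta_t \leq 1$ for all $t$, the empty case can occur for at most $T = O(\log(1/\varepsilon))$ iterations; at some step $t \leq T$ the complete case must arise, producing a complete sub-pair of size at least $c^{T+1} n \geq \varepsilon^C n$ for an appropriate $C = C(\varepsilon)$ depending only on $c$ and $\varepsilon$, as required. The main care point is the bookkeeping around ceilings when $|A_t|$ becomes small and the verification that the ``random subset'' step actually produces an equi-sized square pair, but these introduce only lower-order losses that do not affect the final bound.
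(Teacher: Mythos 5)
Your argument is correct, but it takes a genuinely different route from the paper. The paper's proof is a one-shot application of the mighty Erd\H{o}s--Hajnal property after a preprocessing step: it invokes the Koml\'os--S\'os weak bipartite regularity lemma (Lemma \ref{sz}) to pass from the $\varepsilon$-dense pair $(A,B)$ to a sub-pair $(A_0,B_0)$ of size $\varepsilon^{1/c^2}n$ in which \emph{every} pair of subsets of relative size $c$ spans at least one edge; applying the mighty property once to $(A_0,B_0)$ then forces the homogeneous outcome to be complete. You instead run a density-increment iteration: each time the mighty property returns an empty pair of relative size $c$, the edges are confined to a region of relative area at most $1-c^2$, so after your two averaging steps (both of which are sound --- the pigeonhole over $R_1,R_2$ and the random-subset argument to re-square the pair) the density rises by a factor $1/(1-c^2)$, and since density is bounded by $1$ the complete outcome must appear within $O_c(\log(1/\varepsilon))$ rounds, yielding a complete pair of size $\varepsilon^{C}n$ with $C$ depending only on $c$. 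Your reduction of the $\overline{\mathcal{F}}$ half to the $\mathcal{F}$ half via complementation is the same symmetry the paper uses. What each approach buys: yours is self-contained and elementary, needing no external lemma and giving an explicit exponent roughly $\log(1/c)/c^2$; the paper's is shorter given Lemma \ref{sz} as a black box (which it needs anyway elsewhere) and gives the marginally better exponent of about $1/c^2$. The rounding and small-$|A_t|$ issues you flag are genuinely routine (when $\varepsilon^C n\le 1$ a single edge already suffices), so they do not constitute a gap.
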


The proof of Lemma \ref{mightytodensity} is now a standard application of a weak bipartite version of Szemer\'edi's regularity lemma. The version we use is due to Koml\'os and S\'os (see \cite{KS96, pach98} or Theorem 9.4.1 on page 223 of \cite{mat}).

\begin{lemma}[\cite{KS96, pach98}]
\label{sz}
Let $G = (A,B,E)$ be a bipartite graph with parts $A$ and $B$ such that $|A| = |B| = n$.  Let $0< c \leq 1/2$.  If $|E| \geq \varepsilon n^2$, then there exists subsets $A_0 \subset A, B_0\subset B$ such that

\begin{enumerate}

\item  $|A_0| = |B_0| = \varepsilon^{1/c^2} n$, and

\item $|E(A_0,B_0)| \geq \varepsilon |A_0||B_0|$, and

\item $|E(A',B')| > 0$ for any $A'\subset A_0$ and $B' \subset B_0$  with $|A'|,|B'|\geq c|A_0|$.

\end{enumerate}

\end{lemma}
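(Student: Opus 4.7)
The plan is to establish the lemma by an iterated density-boosting argument in the style of Komlós and S\'os. Set $s := \varepsilon^{1/c^2} n$, treated as an integer throughout. Conditions (1) and (2) in isolation are immediate from averaging: over uniformly random pairs of sets of size $s$, the expected edge count is $|E|\cdot s^2/n^2 \geq \varepsilon s^2$, so some such pair has density at least $\varepsilon$. The delicate part is enforcing condition (3) simultaneously.

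I would run the following iteration. Starting from $(X_0, Y_0) := (A, B)$, at step $i$ the current pair $(X_i, Y_i)$ has common side length $m_i$ and edge density $\rho_i$. If (3) already holds within $(X_i, Y_i)$ at scale $cm_i$, I stop and descend to size exactly $s$ via random sub-sampling. Otherwise, fix $X'_i \subset X_i$ and $Y'_i \subset Y_i$ with $|X'_i|, |Y'_i| \geq cm_i$ and $e(X'_i, Y'_i) = 0$. Writing $a = |X'_i|/m_i$ and $b = |Y'_i|/m_i$, the induced $2\times 2$ decomposition of $(X_i, Y_i)$ has a zero block of area $ab\,m_i^2 \geq c^2 m_i^2$, so the three remaining rectangles have total area at most $(1-c^2) m_i^2$ but carry all the $\rho_i m_i^2$ edges. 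By pigeonhole, at least one of them has edge density at least $\rho_i/(1-c^2)$, and I pass to this densest rectangle; a uniformly random trim of its longer side produces a balanced equal-sided sub-pair $(X_{i+1}, Y_{i+1})$ whose expected density is at least $\rho_i/(1-c^2)$. Since the density is capped at $1$, the boost step can recur only $O(\log(1/\varepsilon)/c^2)$ times before forcing termination.

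The main obstacle I anticipate is the combined bookkeeping of size and density. Each boost shrinks $m_i$ by a factor in $[c, 1-c]$ and multiplies $\rho_i$ by at least $1/(1-c^2)$, so the exponent $1/c^2$ in the target size $s = \varepsilon^{1/c^2} n$ is the one that balances these two multiplicative effects across the entire iteration. A second, related issue is the final restriction from $(X_N, Y_N)$ down to size exactly $s$: property (3) at scale $cm_N$ does not automatically transfer to property (3) at the smaller scale $cs$ in an arbitrary sub-pair. To handle this, I would iterate the boost not merely until (3) holds, but until $\rho_N \geq 1 - c^2$; at that point $(X_N, Y_N)$ has fewer than $c^2 m_N^2$ non-edges, and a first-moment computation shows that for a typical sub-pair of size $s$ inside $(X_N, Y_N)$, the number of non-edges is less than $c^2 s^2$, which simultaneously delivers (3) at scale $cs$ (no $cs \times cs$ zero rectangle can fit) and density $\geq 1 - c^2 > \varepsilon$ (for $\varepsilon$ small). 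Verifying that the terminal side length $m_N$ still exceeds $s$ under this stopping rule is the technical heart of the argument and is where the exponent $1/c^2$ has to be matched precisely.
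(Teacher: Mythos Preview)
The paper does not supply a proof of Lemma~\ref{sz}; it is quoted from \cite{KS96,pach98} (see also \cite{mat}, Theorem~9.4.1), so there is no in-paper argument to compare against. Your density-increment outline is indeed the standard strategy used in those references, and the overall shape of the plan is correct.

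Two steps in the sketch, however, do not close as written. First, your revised stopping rule ``iterate until $\rho_N\ge 1-c^2$'' is not actually available to you: each boost step consumes a witness to the \emph{failure} of~(3), so if (3) happens to hold at some intermediate pair $(X_i,Y_i)$ with $m_i>s$, you cannot continue boosting and are thrown back onto exactly the sub-sampling difficulty you correctly flagged. Second, even granting a boost at every step, your worst-case size accounting does not recover the exponent $1/c^2$. With shrinkage factor at least $c$ per step and density gain $1/(1-c^2)$ per step, the process terminates after at most $N\le \log(1/\varepsilon)\big/\log\frac{1}{1-c^2}$ rounds, so the terminal side is only guaranteed to be at least
\[
c^{N}\,n \;=\; \varepsilon^{\,\log(1/c)\big/\log\frac{1}{1-c^2}}\,n,
\]
and for $c<1/e$ one has $c^2\log(1/c) > -\log(1-c^2)$, so this exponent \emph{exceeds} $1/c^2$ and the terminal side can drop below $s=\varepsilon^{1/c^2}n$. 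Matching the stated exponent therefore requires more than ``densest of the three rectangles, then square down''; the arguments in the cited sources make a more careful choice at each step (in effect tracking a potential that couples density and area) rather than the independent worst-case bounds you use here. You have correctly identified where the difficulty lies, but the sketch as it stands has not yet crossed it.
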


\begin{proof}[Proof of Lemma \ref{mightytodensity}]
Since $\mathcal{F}$ has the mighty Erd\H{o}s-Hajnal property, there is a constant $0 < c \leq 1/2$ such that for every $G \in \mathcal{F}$ and disjoint vertex subsets $A,B$ of $G$ of the same size, there are $A' \subset A$ and $B' \subset B$ with $|A'|,|B'| \geq c|A|$ such that the bipartite graph between $A'$ and $B'$ is either complete or empty. 

By symmetry, it suffices to show that $\mathcal{F}$ has the homogeneous density property.  Let $G \in \mathcal{F}$ and $A,B$ be disjoint vertex subsets of equal size such that the edge density between $A$ and $B$ is at least $\varepsilon$. By Lemma \ref{sz}, there are $A_0 \subset A$ and $B_0 \subset B$ such that $|A_0|=|B_0| = \varepsilon^{1/c^2}|A|$, the edge density between $A_0$ and $B_0$ is at least $\varepsilon$, and $|E(A',B')|>0$ for any $A' \subset A_0$ and $B' \subset B_0$ with $|A'|,|B'| \geq c|A_0|$. Applying the mighty Erd\H{o}s-Hajnal property to $A_0$ and $B_0$, we get subsets $A'$ and $B'$ with $|A'|,|B'| \geq c|A_0|$ that are complete or empty to each other. However, the above condition that there is at least one edge between $A'$ and $B'$ implies that $A'$ and $B'$ are complete to each other, completing the proof.  \end{proof}

\subsection{Deductions for intersection graphs of pseudo-segments}

By Theorem \ref{equivalences}, the main result in this paper (Theorem \ref{regularity}) is equivalent to saying that the family of intersection graphs of pseudo-segments has the mighty Erd\H{o}s-Hajnal property. Therefore, to prove Theorem \ref{regularity}, it suffices to prove the following result. 

\begin{theorem}\label{main}
 Let $\mathcal{R}$ be a set of $n$ red curves, and $\mathcal{B}$ be a set of $n$ blue curves in the plane such that $\mathcal{R}\cup \mathcal{B}$ is a collection of pseudo-segments.  
 
 Then there are subsets $\mathcal{R}'\subset \mathcal{R}$ and $\mathcal{B}'\subset \mathcal{B}$, where $|\mathcal{R}'|,|\mathcal{B}'| \geq \Omega(n)$, such that either every curve in $\mathcal{R}'$ crosses every curves in $\mathcal{B}'$, or every curve in $\mathcal{R}'$ is disjoint from every curves in $\mathcal{B}'$.
\end{theorem}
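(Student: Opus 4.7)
The plan is to deduce Theorem~\ref{main} from the two ingredients developed earlier in the paper: the strong Erd\H{o}s-Hajnal-type statement for a single family of pseudo-segments (from Subsection~\ref{strongeh}) and the bipartite Ramsey theorem for families with extreme internal crossing density (from Section~\ref{highvslow}). Once Theorem~\ref{main} is established in this way, Theorem~\ref{equivalences} immediately upgrades it to the full homogeneous regularity lemma (Theorem~\ref{regularity}).

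First, I fix a small constant $\varepsilon > 0$ compatible with the constants appearing in Sections~\ref{dbground} and~\ref{highvslow}, and apply the Subsection~\ref{strongeh} result separately to $\mathcal{R}$ and to $\mathcal{B}$. This produces subfamilies $\mathcal{R}_0 \subseteq \mathcal{R}$ and $\mathcal{B}_0 \subseteq \mathcal{B}$, each of linear size $\Omega(n)$, such that inside each of $\mathcal{R}_0, \mathcal{B}_0$ either the fraction of crossing pairs is at most $\varepsilon$ (\emph{crossing-sparse}) or at least $1 - \varepsilon$ (\emph{crossing-dense}). This places us in one of four combinations of internal behavior, depending on whether each colored family is sparse or dense in crossings.

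Next, I apply the Section~\ref{highvslow} theorem to the pair $(\mathcal{R}_0, \mathcal{B}_0)$. That theorem is designed precisely for this situation: it takes two pseudo-segment families each of which has extreme internal crossing density, and outputs linear-sized subfamilies $\mathcal{R}' \subseteq \mathcal{R}_0$ and $\mathcal{B}' \subseteq \mathcal{B}_0$ for which the bipartite crossing pattern is fully homogeneous. Since $|\mathcal{R}'|, |\mathcal{B}'|$ are a constant fraction of $|\mathcal{R}_0|, |\mathcal{B}_0|$, which in turn are a constant fraction of $n$, we obtain $|\mathcal{R}'|, |\mathcal{B}'| = \Omega(n)$, which is the conclusion of Theorem~\ref{main}.

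The main obstacle is not in this short final reduction, but in the ingredients. The genuinely hard part is the Section~\ref{highvslow} theorem, which must handle all four internal-density combinations uniformly. The strategy there builds on the double-grounded special case from Section~\ref{dbground}: in a crossing-sparse family one tries to locate a single curve that serves as a ``ground'' for a positive fraction of the others and reduces to a double-grounded configuration, whereas crossing-dense subcases are handled by passing to the intersection-graph complement. The delicate point is carrying out these reductions on the two colors simultaneously while preserving both linear-sized subfamilies and the bipartite crossing pattern between them, so that a single application of the double-grounded case suffices to conclude.
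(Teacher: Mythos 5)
Your reduction is essentially the paper's own proof of Theorem \ref{main}: apply Corollary \ref{lemhom} (the polynomial R\"odl property obtained in Subsection \ref{strongeh}) to each color class to get linear-sized $\varepsilon$-homogeneous subfamilies, then invoke the appropriate extreme-density bipartite result of Section \ref{highvslow} (Theorem \ref{0a0}, \ref{thmhvl}, or \ref{thmhvh}) according to which of the internal density combinations occurs. The only small detail to add is that those theorems are stated for two families of equal size, so one should pass to equal-sized subsets of $\mathcal{R}_0$ and $\mathcal{B}_0$ (as the paper does) before applying them.
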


As mentioned earlier, a weaker result, that the family of intersection graphs of pseudo-segments has the strong Erd\H{o}s-Hajnal property, was proved earlier by Fox, Pach, and T\'oth \cite{fpt}.  We repeat it below for convenience. 

\begin{theorem} [\cite{fpt}]\label{ehfpt}
Let $\mathcal{C}$ be a collection on $n$ pseudo-segments in the plane.  Then there are subsets $\mathcal{C}_1,\mathcal{C}_2\subset \mathcal{C}$, where $|\mathcal{C}_1|,|\mathcal{C}_2|\geq \Omega(n)$, such that
    either every curve in $\mathcal{C}_1$ crosses every curve in $\mathcal{C}_2$, or every curve in $\mathcal{C}_1$ is disjoint from every curve in $\mathcal{C}_2$
\end{theorem}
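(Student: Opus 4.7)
The plan is to follow the three-stage reduction outlined in the introduction: first establish the theorem in the special case where one family is double-grounded, then handle the case where each of $\mathcal{R}$ and $\mathcal{B}$ has nearly homogeneous internal crossing structure, and finally reduce the general case to this homogeneous setting by applying the strong Erd\H{o}s-Hajnal reduction of Subsection \ref{strongeh} to each color class separately.

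For the double-grounded base case, I would assume there is a Jordan arc $\gamma$ such that every red curve meets $\gamma$ exactly at its two endpoints, and every blue curve is disjoint from $\gamma$. Linearly ordering the endpoints on $\gamma$ turns $\mathcal{R}$ into a system of pseudo-chords, and each blue curve induces a pattern of crossings on these chords. The plan here is to assign each blue curve a combinatorial signature recording (a bounded amount of) how its trace interleaves with the red endpoints around $\gamma$, apply pigeonholing to obtain a linear-size $\mathcal{B}'$ of blue curves with a common signature, and then perform a Dilworth-type extraction on $\mathcal{R}$ --- exploiting that two red pseudo-chords cross at most once, so that the "crossed-by-$\mathcal{B}'$" order is a well-behaved partial order --- to isolate a linear-size $\mathcal{R}' \subset \mathcal{R}$ homogeneous to $\mathcal{B}'$.

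Armed with the base case, I next apply the strong Erd\H{o}s-Hajnal property of Subsection \ref{strongeh} separately to $\mathcal{R}$ and to $\mathcal{B}$, passing to linear-size subsets $\mathcal{R}_0 \subset \mathcal{R}$ and $\mathcal{B}_0 \subset \mathcal{B}$ in each of which either almost all pairs cross or almost all pairs are disjoint; the union remains a collection of pseudo-segments. In each of the four resulting cases I would exhibit a double-grounded substructure of linear size: in the "mostly disjoint" red case, a large subcollection looks topologically like a family of nearly parallel arcs that can be truncated onto a common transversal playing the role of $\gamma$; in the "mostly crossing" red case, one instead takes a small neighborhood of one heavily crossed curve as $\gamma$ and replaces each remaining red curve by one of its sub-arcs meeting $\gamma$ in two points. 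Invoking the base case on this substructure, together with a symmetric adjustment to accommodate the blue side, yields the desired bipartite homogeneous pair.

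The main obstacle will be the double-grounded base case. Once the problem is reduced to chord diagrams with colored crossing patterns, naive Erd\H{o}s-Szekeres or ordered-Ramsey arguments only give polynomial-size homogeneous pairs, which is much too weak for the mighty Erd\H{o}s-Hajnal conclusion we need. To reach a linear bound, one must extract all of the rigidity afforded by the pseudo-segment hypothesis: the at-most-one-crossing constraint forces the blue signature to be essentially determined by a very small amount of data, and forces the red chord order and the red/blue crossing pattern to satisfy strong monotonicity properties reminiscent of interval or comparability structures. A secondary difficulty is quantitative --- each of the three reduction phases loses a constant factor in the sizes of $\mathcal{R}'$ and $\mathcal{B}'$, and the double-grounded step must be proved with sufficient slack that the final output is still linear in $n$.
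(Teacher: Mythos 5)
The statement you are asked to prove is not proved in this paper at all: it is quoted verbatim from Fox--Pach--T\'oth \cite{fpt} and used as a black box, so there is no internal proof to match. Judged on its own terms, your proposal has a genuine circularity at its second stage. You ``apply the strong Erd\H{o}s--Hajnal property of Subsection \ref{strongeh} separately to $\mathcal{R}$ and to $\mathcal{B}$'' to pass to linear-size subsets that are almost-all-crossing or almost-all-disjoint. But Subsection \ref{strongeh} only proves the implication \emph{strong EH $\Rightarrow$ polynomial R\"odl}; the fact that pseudo-segment intersection graphs \emph{have} the strong Erd\H{o}s--Hajnal property is exactly Theorem \ref{ehfpt}, the statement you are trying to prove, and Corollary \ref{lemhom} (the $\varepsilon$-homogeneous reduction you are invoking) is deduced from it. So your reduction to the homogeneous setting assumes the conclusion. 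This architecture is borrowed from the paper's proof of the harder bipartite Theorem \ref{main}, where citing Theorem \ref{ehfpt} is legitimate because it is prior work; it cannot be recycled to prove Theorem \ref{ehfpt} itself.

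Two further points. First, a non-circular proof of this particular statement is much shorter than your plan: split into cases according to the number of crossing pairs in $\mathcal{C}$. If it is at most $\delta n^2$ for a small constant $\delta$, the separator theorem for curves (Lemma \ref{foxpach}) gives a partition with a separator of size $O(\sqrt{\delta}\,n)$, hence two parts of size $\Omega(n)$ with no crossings between them; if it is at least $\delta n^2$, the density theorem for crossings (Theorem \ref{old}, from \cite{fp2}) directly yields two subsets of size $\Omega(\delta n)$ that completely cross. No double-grounded analysis or homogeneous reduction is needed. Second, even setting circularity aside, your double-grounded base case is not sound as sketched: a blue curve's crossing pattern with a family of red pseudo-chords is not determined by ``a bounded amount of data,'' and the number of distinct patterns is unbounded, so pigeonholing on signatures cannot produce a linear-size homogeneous blue class; the paper's own double-grounded argument (Lemma \ref{xmono} and Theorem \ref{thmdg}) instead uses the cutting lemma and a dichotomy over a fixed reference curve precisely to avoid this. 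Similarly, in the ``mostly crossing'' red case, a curve crossing a reference curve once does not yield a sub-arc meeting it twice, and handling that case genuinely requires the kind of iterative upper/lower decomposition carried out in Lemma \ref{keylem}.
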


By Theorem \ref{equivalences}, Theorem \ref{main} is equivalent to the following two theorems holding.  

\begin{theorem}\label{notold}
    There is an absolute constant $c > 0$ such that the following holds.  Let $\mathcal{R}$ be a collection of $n$ red curves, and $\mathcal{B}$ be a collection of $n$ blue curves in the plane such that $\mathcal{R}\cup\mathcal{B}$ is a collection of pseudo-segments. 

 If there are at least $\varepsilon n^2$ crossing pairs in $\mathcal{R}\times \mathcal{B}$, then there are subsets $\mathcal{R}'\subset \mathcal{R},\mathcal{B}' \subset \mathcal{B}$, where $|\mathcal{R}'|,|\mathcal{B}'| \geq \varepsilon^{c}n$, such that every curve in $\mathcal{R}'$ crosses every curve in $\mathcal{B}'$. 
\end{theorem}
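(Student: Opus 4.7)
The plan is to derive Theorem~\ref{notold} as a direct consequence of Theorem~\ref{main} via the bipartite Komlós--Sós regularity lemma (Lemma~\ref{sz}), essentially repeating the argument of Lemma~\ref{mightytodensity} specialized to the hereditary family of intersection graphs of pseudo-segments. Since this family has the mighty Erdős--Hajnal property by Theorem~\ref{main}, there exists an absolute constant $c_0 \in (0, 1/2]$ such that for any pseudo-segment collection and any pair of disjoint equal-size vertex subsets $A, B$, one can extract $A' \subset A$, $B' \subset B$ of size at least $c_0|A|$ whose crossing bipartite graph is either complete or empty.

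First I would apply Lemma~\ref{sz} with parameter $c := c_0$ to the bipartite crossing graph between $\mathcal{R}$ and $\mathcal{B}$, which has edge density at least $\varepsilon$ by hypothesis. This produces $A_0 \subset \mathcal{R}$ and $B_0 \subset \mathcal{B}$ with $|A_0| = |B_0| = \varepsilon^{1/c_0^2} n$ such that (a) the crossing density between $A_0$ and $B_0$ remains at least $\varepsilon$, and, crucially, (b) every sub-pair $(A', B')$ with $A' \subset A_0$, $B' \subset B_0$, and $|A'|, |B'| \geq c_0|A_0|$ contains at least one crossing.

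Next I would invoke the mighty Erdős--Hajnal property of Theorem~\ref{main} on the pseudo-segment subfamily $A_0 \cup B_0$ with the designated bipartition $(A_0, B_0)$, yielding $A' \subset A_0$ and $B' \subset B_0$ of size at least $c_0|A_0|$ on which the bipartite crossing pattern is homogeneous. Property~(b) rules out the empty case, so every curve in $A'$ must cross every curve in $B'$. Combining the size bounds yields $|A'|, |B'| \geq c_0 \cdot \varepsilon^{1/c_0^2} n \geq \varepsilon^c n$ for a suitable absolute constant $c$ (the factor $c_0$ can be absorbed into the exponent). The deduction itself is routine; the genuine obstacle — and the main work of the paper — lies in proving Theorem~\ref{main}, which is the subject of Sections~\ref{dbground}--\ref{pfmain}.
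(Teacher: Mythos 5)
Your proposal is correct and takes essentially the same route as the paper: Theorem~\ref{notold} is deduced there from Theorem~\ref{main} via Theorem~\ref{equivalences}, whose relevant direction (Lemma~\ref{mightytodensity}) is precisely the Koml\'os--S\'os argument you describe --- apply Lemma~\ref{sz} to the dense bipartite crossing graph, then the mighty Erd\H{o}s--Hajnal property, with condition (3) of Lemma~\ref{sz} ruling out the empty outcome. The only implicit convention, shared with the paper's own bookkeeping, is that $\varepsilon$ is treated as bounded away from $1$ so the constant factor $c_0$ can be absorbed into the exponent $c$.
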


\begin{theorem}\label{appdis}
    There is an absolute constant $c > 0$ such that the following holds.  Let $\mathcal{R}$ be a collection of $n$ red curves, and $\mathcal{B}$ be a collection of $n$ blue curves in the plane such that $\mathcal{R}\cup\mathcal{B}$ is a collection of pseudo-segments. 

 If there are at least $\varepsilon n^2$ disjoint pairs in $\mathcal{R}\times \mathcal{B}$, then there are subsets $\mathcal{R}'\subset \mathcal{R},\mathcal{B}' \subset \mathcal{B}$, where $|\mathcal{R}'|,|\mathcal{B}'| \geq \varepsilon^{c}n$, such that every curve in $\mathcal{R}'$ is disjoint from every curve in $\mathcal{B}'$. 
\end{theorem}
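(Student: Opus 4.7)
The plan is to derive Theorem~\ref{appdis} directly from Theorem~\ref{main} using the argument pattern established in Lemma~\ref{mightytodensity}. Theorem~\ref{main} is precisely the assertion that the hereditary family $\mathcal{F}$ of intersection graphs of pseudo-segments has the mighty Erd\H{o}s--Hajnal property with some absolute constant $c_0>0$. Since $\mathcal{F}$ is hereditary, so is $\overline{\mathcal{F}}$, and the proof of Lemma~\ref{mightytodensity} applied to $\overline{\mathcal{F}}$ yields the homogeneous density property for $\overline{\mathcal{F}}$. Translated into geometric language---edges of $\overline{\mathcal{F}}$ correspond to \emph{disjoint} pairs of curves---this is exactly Theorem~\ref{appdis}. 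So the task reduces to repeating the two-step Koml\'os--S\'os + mighty-Erd\H{o}s--Hajnal argument with crossings and non-crossings swapped.

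Concretely, form the bipartite auxiliary graph $H$ on parts $\mathcal{R}, \mathcal{B}$ in which $r\in \mathcal{R}$ is joined to $b\in \mathcal{B}$ exactly when $r$ and $b$ are \emph{disjoint}; the hypothesis provides $|E(H)| \ge \varepsilon n^2$. Apply Lemma~\ref{sz} to $H$ with parameter $c := c_0$ to obtain subsets $\mathcal{R}_0 \subset \mathcal{R}$ and $\mathcal{B}_0 \subset \mathcal{B}$ of common size $\varepsilon^{1/c_0^2}\,n$ with the additional feature that \emph{every} further pair of subsets $\mathcal{R}''\subset \mathcal{R}_0$, $\mathcal{B}''\subset \mathcal{B}_0$ of sizes at least $c_0|\mathcal{R}_0|$ still contains at least one disjoint pair in $\mathcal{R}''\times \mathcal{B}''$.

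Next, invoke Theorem~\ref{main} on the pseudo-segment collection $\mathcal{R}_0 \cup \mathcal{B}_0$ (applied with red set $\mathcal{R}_0$ and blue set $\mathcal{B}_0$) to produce subfamilies $\mathcal{R}' \subset \mathcal{R}_0$ and $\mathcal{B}' \subset \mathcal{B}_0$ with $|\mathcal{R}'|, |\mathcal{B}'| \ge c_0 |\mathcal{R}_0|$ such that the bipartite pair $(\mathcal{R}', \mathcal{B}')$ is homogeneous in the intersection graph---either all pairs cross, or all pairs are disjoint. The first alternative is ruled out by the third conclusion of Lemma~\ref{sz}, which forces $\mathcal{R}' \times \mathcal{B}'$ to contain a disjoint pair. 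Therefore every curve in $\mathcal{R}'$ is disjoint from every curve in $\mathcal{B}'$, and $|\mathcal{R}'|, |\mathcal{B}'| \ge c_0\, \varepsilon^{1/c_0^2}\, n \ge \varepsilon^{c}\, n$ for a suitable absolute constant $c$.

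The principal difficulty does not lie in this deduction, which is a routine specialisation of Lemma~\ref{mightytodensity} to $\overline{\mathcal{F}}$; the substantive work is in proving Theorem~\ref{main} itself, for which the paper's later sections are reserved. A natural sanity check is to verify that the same two-step argument, run on the graph $H$ of \emph{crossing} pairs instead of disjoint pairs, recovers Theorem~\ref{notold} verbatim---confirming that Theorems~\ref{notold} and~\ref{appdis} really are the two ``halves'' of the mighty Erd\H{o}s--Hajnal property promised by Theorem~\ref{equivalences} and Theorem~\ref{main}.
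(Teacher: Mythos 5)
Your proposal is correct and is essentially the paper's own argument: the paper deduces Theorem~\ref{appdis} (together with Theorem~\ref{notold}) from Theorem~\ref{main} via Theorem~\ref{equivalences}, whose relevant direction is Lemma~\ref{mightytodensity} --- precisely the two-step Koml\'os--S\'os (Lemma~\ref{sz}) plus mighty Erd\H{o}s--Hajnal argument you describe, run with disjointness as the edge relation. The only caveat, that the final bound $c_0\varepsilon^{1/c_0^2}n\ge \varepsilon^{c}n$ requires $\varepsilon$ bounded away from $1$ (or a harmless restatement of the bound), is equally present in the paper's own Lemma~\ref{mightytodensity} and does not distinguish your argument from theirs.
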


  Theorem \ref{appdis} is the first density-type result for \emph{disjointness graphs} of pseudo-segments, and is the key ingredient in the proof of Theorem \ref{disjoint}.  For the proof of Theorem \ref{main}, we need the following non-bipartite version of Theorem \ref{notold} established by the first two authors in \cite{fp2}.
  
  \begin{theorem}[\cite{fp2}]\label{old}
       There is an absolute constant $c'>0$ such that the following holds.  Let $\mathcal{C}$ be a collection of $n$ pseudo-segments in the plane with at least $\varepsilon n^2$ crossing pairs.  Then there are subsets $\mathcal{C}_1, \mathcal{C}_2  \subset \mathcal{C}$, each of size $c'\varepsilon n$, such that every curve in $\mathcal{C}_1$ crosses every curve in $\mathcal{C}_2$. 
  \end{theorem}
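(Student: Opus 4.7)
The plan is to combine the strong Erd\H os--Hajnal property for pseudo-segments (Theorem~\ref{ehfpt}) with the Koml\'os--S\'os regularity lemma (Lemma~\ref{sz}), using the latter to produce a bipartite substructure so well-connected that the homogeneous pair returned by Theorem~\ref{ehfpt} is forced to be crossing rather than disjoint.

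First I would reduce to a bipartite setting by randomly partitioning $\mathcal{C}$ into halves $\mathcal{A}, \mathcal{B}$ of size $n/2$; a standard first-moment calculation shows that some such partition has at least $\varepsilon n^2/4$ crossings going between $\mathcal{A}$ and $\mathcal{B}$. Let $c_0 \in (0, 1/2]$ be the constant supplied by Theorem~\ref{ehfpt}. Applying Lemma~\ref{sz} to the bipartite crossing graph with parameter $c = c_0$ yields $\mathcal{A}_0 \subset \mathcal{A}, \mathcal{B}_0 \subset \mathcal{B}$ of common size $m = \varepsilon^{O(1)} n$ such that every $A' \subset \mathcal{A}_0$, $B' \subset \mathcal{B}_0$ of size at least $c_0 m$ share at least one crossing.

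Next I apply Theorem~\ref{ehfpt} to the $2m$-element collection $\mathcal{A}_0 \cup \mathcal{B}_0$ of pseudo-segments, obtaining disjoint homogeneous subsets $S_1, S_2$ of size at least $2c_0 m$. If the pair is crossing, I extract $\mathcal{C}_1, \mathcal{C}_2$ by pigeonholing: set $\mathcal{C}_i := S_i \cap \mathcal{M}_i$, where $\mathcal{M}_i \in \{\mathcal{A}_0, \mathcal{B}_0\}$ is a side of the bipartition containing at least half of $S_i$. If the pair is disjoint, a short case analysis on the sizes $|S_i \cap \mathcal{A}_0|$ and $|S_i \cap \mathcal{B}_0|$ shows that either we reach a contradiction with the crossing property guaranteed by Lemma~\ref{sz} (whenever $S_1, S_2$ land on opposite sides of the bipartition with majority sizes at least $c_0 m$), or else both $S_1$ and $S_2$ must be concentrated on the same side of the bipartition.

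This ``same-side disjoint'' configuration is the main obstacle: it produces a large disjoint pair inside $\mathcal{A}_0$ (or $\mathcal{B}_0$) without directly contradicting anything. To overcome it, I would iterate the whole procedure inside the dominant side, which still contains $\varepsilon^{O(1)} n$ pseudo-segments, using an averaging step to ensure enough crossings survive to restart the process; since each ``disjoint'' iteration removes only a $c_0$-fraction of the vertices while the total number of crossings is polynomially large, eventually the crossing alternative of Theorem~\ref{ehfpt} must occur. Refining this iteration to recover the precise linear dependence $c' \varepsilon n$ claimed in the theorem (rather than the $\varepsilon^{O(1)} n$ bound produced by a single application of Lemma~\ref{sz}) is the most delicate quantitative point, and I expect it to require a careful amortized analysis tailored to the geometry of pseudo-segments.
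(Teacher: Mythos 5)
This statement is not proved in the paper at all: it is quoted from \cite{fp2}, so your derivation has to stand on its own, and as written it has a genuine gap. The problem is the ``same-side disjoint'' case, which you correctly identify as the main obstacle but do not actually overcome. After Lemma~\ref{sz} you only control crossings \emph{across} the bipartition $(\mathcal{A}_0,\mathcal{B}_0)$; the lemma gives no lower bound whatsoever on the number of crossing pairs \emph{inside} $\mathcal{A}_0$ (or inside $\mathcal{B}_0$). Consider the extremal scenario in which the curves of $\mathcal{A}$ are pairwise disjoint, the curves of $\mathcal{B}$ are pairwise disjoint, and all $\varepsilon n^2$ crossings go between the two sides. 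Then Theorem~\ref{ehfpt} applied to $\mathcal{A}_0\cup\mathcal{B}_0$ may legitimately return two disjoint families both contained in $\mathcal{A}_0$, and once you ``iterate the whole procedure inside the dominant side'' you are working in a family with \emph{zero} crossings, so the crossing alternative of Theorem~\ref{ehfpt} can never occur and the iteration never terminates in the desired outcome. Your amortization claim (``the total number of crossings is polynomially large'' inside the iterated family) is exactly what fails: the crossings guaranteed by the hypothesis live between the two sides, and restricting to one side discards all of them. Handling a dense bipartite crossing pattern between two internally crossing-free families is essentially the content of the bipartite Theorem~\ref{notold}, which this paper \emph{deduces from} Theorem~\ref{old}; invoking anything of that kind here would be circular. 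The actual proof in \cite{fp2} does not follow this route.

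There is also a quantitative mismatch even in the favorable case. A single application of Lemma~\ref{sz} with parameter $c=c_0$ already shrinks the sets to $m=\varepsilon^{1/c_0^2}n/2$, so the homogeneous pair you extract has size of order $\varepsilon^{1/c_0^2}n$, i.e.\ polynomial in $\varepsilon$ with a large exponent, whereas the theorem asserts sets of size $c'\varepsilon n$, linear in $\varepsilon$. You flag this as ``the most delicate quantitative point'' but give no mechanism for recovering it, and no amount of iterating the Koml\'os--S\'os lemma will: each pass loses another polynomial factor in $\varepsilon$. So both the structural step (what to do when the homogeneous pair is disjoint and lands on one side) and the claimed bound require ideas not present in the proposal.
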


\subsection{The strong Erd\H{o}s-Hajnal property implies the polynomial R\"odl property} \label{strongeh}

The goal of this subsection is to prove the following lemma, and deduce that the family of intersection graphs of pseudo-segments has the polynomial R\"odl property. 

\begin{lemma}\label{strongEHimpliesPR}
If a hereditary family $\mathcal{F}$ of graphs has the strong Erd\H{o}s-Hajnal property, then it also has the polynomial R\"odl property.
\end{lemma}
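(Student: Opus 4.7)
The plan is to iteratively apply the strong Erd\H os-Hajnal property to build a complete binary tree of subsets of $V(G)$, then exploit the tree structure via a multiplicative combinatorial lemma to extract a polynomially-large $\varepsilon$-homogeneous induced subgraph. Let $c=\varepsilon(\mathcal{F})>0$ be the constant from the strong Erd\H os-Hajnal property, and set $k=\lceil 2\log_2(2/\varepsilon)\rceil$. Given $G\in\mathcal{F}$ on $n$ vertices, I would construct a rooted binary tree $T$ of depth $k$ whose nodes carry subsets of $V(G)$: the root carries $V(G)$, and at each internal node carrying a subset $W$, I apply the strong Erd\H os-Hajnal property to $G[W]\in\mathcal{F}$ (using that $\mathcal{F}$ is hereditary) to obtain disjoint $A,B\subseteq W$ of sizes at least $c|W|$ such that the bipartite graph between $A$ and $B$ in $G$ is either complete (label the node $1$) or empty (label the node $0$), assigning $A$ and $B$ to the two children. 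Each of the $2^k$ leaves then carries a subset of size at least $c^k n$, and the bipartite graph in $G$ between the subsets at any two leaves is complete or empty according to the bit labeling their lowest common ancestor.

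The heart of the argument is a combinatorial lemma about labeled binary trees, which I would prove by a short induction. For any binary tree $T$ whose internal nodes carry labels in $\{0,1\}$, let $g(T)$ denote its number of leaves and, for $b\in\{0,1\}$, let $f_b(T)$ denote the maximum size of a set of leaves such that every pair in the set has lowest common ancestor labeled $b$. Then $f_0(T)\,f_1(T) \geq g(T)$. Indeed, at a root labeled (say) $0$ with subtrees $L,R$, one has $f_0(T)=f_0(L)+f_0(R)$ and $f_1(T)=\max(f_1(L),f_1(R))$; assuming $f_1(L)\geq f_1(R)$, the induction hypothesis gives
\[
f_0(T)\,f_1(T) \;\geq\; (f_0(L)+f_0(R))\,f_1(L) \;\geq\; f_0(L)f_1(L) + f_0(R)f_1(R) \;\geq\; g(L)+g(R) \;=\; g(T).
\]
Applied to our tree, this yields $\max(f_0,f_1)\geq \sqrt{g(T)}=2^{k/2}\geq 2/\varepsilon$, so there is a set of $m\geq 2/\varepsilon$ leaves whose pairwise lowest common ancestors all carry a common bit $b$.

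Let $U$ be the union of these $m$ leaves, so $|U|\geq m c^k n$, which is $\varepsilon^{C} n$ for $C=2\log_2(1/c)+O(1)$ by direct computation. If $b=1$, the $\binom{m}{2}(c^k n)^2$ crossing edges between pairs of chosen leaves alone force the edge density of $G[U]$ to be at least $1-1/m\geq 1-\varepsilon$. If $b=0$, the edges of $G[U]$ are confined to individual leaves, so the density is at most $1/(m-1)\leq \varepsilon$. Either way $G[U]$ is $\varepsilon$-homogeneous of the required size, establishing the polynomial R\"odl property for $\mathcal{F}$. Combined with Theorem~\ref{ehfpt}, this immediately gives that the family of intersection graphs of pseudo-segments has the polynomial R\"odl property.

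The main obstacle—and key insight—is the multiplicative lemma $f_0 f_1\geq g$. A direct Ramsey-theoretic argument on the $2$-coloring of pairs of leaves would yield only $\Omega(\log 2^k)=\Omega(k)$ pairwise-monochromatic leaves, producing a bound of shape $\varepsilon^{-1}c^{\Theta(1/\varepsilon)}n$, which is exponential in $1/\varepsilon$. The tree-structured coloring admits the much stronger $\sqrt{g}$ monochromatic selection, which is precisely the improvement needed to convert the fixed constant $c$ from strong Erd\H os-Hajnal into a polynomial exponent in R\"odl.
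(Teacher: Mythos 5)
Your proof is correct and follows essentially the same route as the paper: iterate the strong Erd\H{o}s--Hajnal property to build a depth-$O(\log(1/\varepsilon))$ binary tree of parts that are pairwise complete or empty, extract $2^{k/2}\geq 1/\varepsilon$ parts that are pairwise homogeneous of the same type, and take their union; your tree lemma $f_0(T)f_1(T)\geq g(T)$ is just a self-contained proof of the fact the paper obtains by observing that one vertex per leaf spans a cograph and citing perfection (so a clique or independent set of size $\sqrt{2^t}$ exists). One small point: for the final density estimate you should trim the two subsets at each step (hence all leaves) to equal size, say exactly $\lceil c|W|\rceil$, since with leaves of merely size ``at least $c^kn$'' a single oversized leaf could ruin the $1-1/m$ (resp.\ $1/(m-1)$) bound --- the paper makes the same implicit normalization.
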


In proving Lemma \ref{strongEHimpliesPR}, given a graph $G$ in a hereditary family of graphs satisfying the strong Erd\H{o}s-Hajnal property, we find a large subgraph of $G$ or its complement that is a balanced complete multipartite graph with many parts. Such a subgraph must have density close to one, giving the desired $\varepsilon$-homogeneous induced subgraph. 

\begin{proof}[Proof of Lemma \ref{strongEHimpliesPR}]
Since $\mathcal{F}$ has the strong Erd\H{o}s-Hajnal property, there is a constant $c>0$ such that every $G \in \mathcal{F}$ on $n \geq 2$ vertices has disjoint vertex subsets $V_1,V_2$ with $|V_1|=|V_2| = cn$ such that the graph between $V_1$ and $V_2$ is complete or empty. We repeatedly apply this to the induced subgraph of each of the vertex subsets of $G$ we obtain. After $t$ levels of iteration, we obtain $2^t$ disjoint vertex subsets, each of size $c^t n$, that are complete or empty between each pair of them, and the induced subgraph $H$ of $G$ formed by taking one vertex from each of these $2^t$ subsets is a cograph. Since cographs are perfect graphs, and any perfect graph with $m$ vertices contains a clique or independent set of size at least $m^{1/2}$, $H$ contains a clique or independent set with at least $2^{t/2}$ vertices. Let $U_1,\ldots,U_s$ be the vertex subsets that the vertices from this clique or independent set belong to. Then the edge density of the induced subgraph on the union $\bigcup_{i=1}^s U_i$ is either more than  $1-1/s$ or less than $1/s$. Let $t=2\log_2 (1/\varepsilon)$. Since $s \geq 2^{t/2} = 1/\varepsilon$ and $c^t=\varepsilon^C$ with $C=2\log_2 (1/c)$, this completes the proof.
\end{proof}

Given a collection $\mathcal{C}$ of curves in the plane,  let $G(\mathcal{C})$ denote the intersection graph of $\mathcal{C}$. According to Theorem~\ref{ehfpt}, the family of intersection graphs of pseudo-segments has the strong Erd\H{o}s-Hajnal property. Applying Lemma \ref{strongEHimpliesPR}, we obtain  

\begin{corollary}\label{lemhom}
The family of intersection graphs of pseudo-segments has the polynomial R\"odl property. That is, there is an absolute constant $c_1 > 0$ such that the following holds. Let $\varepsilon > 0$ and $\mathcal{C}$ be a collection of $n$ pseudo-segments in the plane.  Then there is a subset $\mathcal{C}'\subset \mathcal{C}$ of size $\varepsilon^{c_1} n$ whose  intersection graph $G(\mathcal{C}')$ is $\varepsilon$-homogeneous.  
\end{corollary}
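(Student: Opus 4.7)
The plan is to derive Corollary \ref{lemhom} as an immediate consequence of the two results that immediately precede it: Theorem \ref{ehfpt} of Fox, Pach, and T\'oth, which asserts that the family $\mathcal{F}_{\mathrm{ps}}$ of intersection graphs of pseudo-segments has the strong Erd\H{o}s-Hajnal property, together with Lemma \ref{strongEHimpliesPR}, which says that for any hereditary family of graphs the strong Erd\H{o}s-Hajnal property implies the polynomial R\"odl property. The only thing we need to check, apart from feeding these inputs into the right slots, is that $\mathcal{F}_{\mathrm{ps}}$ is indeed hereditary.

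Heredity is essentially by definition: if $G = G(\mathcal{C})$ is the intersection graph of a collection $\mathcal{C}$ of pseudo-segments and $H$ is an induced subgraph of $G$ on a vertex subset corresponding to $\mathcal{C}' \subset \mathcal{C}$, then $\mathcal{C}'$ is still a collection of curves any two of which have at most one point in common and no three of which share a point, so $\mathcal{C}'$ is itself a collection of pseudo-segments and $H = G(\mathcal{C}') \in \mathcal{F}_{\mathrm{ps}}$. Thus the hypothesis of Lemma \ref{strongEHimpliesPR} is satisfied.

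Given this, I would simply apply Lemma \ref{strongEHimpliesPR} to $\mathcal{F}_{\mathrm{ps}}$: there exists an absolute constant $c > 0$ (the strong Erd\H{o}s-Hajnal constant for pseudo-segments, supplied by Theorem \ref{ehfpt}) such that, on input $\varepsilon > 0$ and any $G(\mathcal{C}) \in \mathcal{F}_{\mathrm{ps}}$ on $n$ vertices, the proof of Lemma \ref{strongEHimpliesPR} iterates the strong Erd\H{o}s-Hajnal dichotomy for $t = 2 \log_2(1/\varepsilon)$ levels to produce $2^t$ disjoint subsets each of size $c^t n$, pairwise homogeneous, and then uses the fact that cographs are perfect to extract an $\varepsilon$-homogeneous induced subgraph on at least $c^t n = \varepsilon^{C} n$ vertices, where $C = 2 \log_2(1/c)$. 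Setting $c_1 := 2 \log_2(1/c)$ yields the corollary.

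There is no real obstacle here: both ingredients are already in place and the deduction is a one-line combination, with the only conceptual point being the (trivial) verification of heredity. If one wanted to be explicit about constants, the mildly nontrivial step would just be tracking that the constant $c_1$ depends only on the strong Erd\H{o}s-Hajnal constant for pseudo-segments coming from Theorem \ref{ehfpt}, and not on $\varepsilon$ or $n$.
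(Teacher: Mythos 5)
Your proposal matches the paper's own deduction exactly: Corollary~\ref{lemhom} is obtained there by noting that Theorem~\ref{ehfpt} gives the strong Erd\H{o}s-Hajnal property for intersection graphs of pseudo-segments and then applying Lemma~\ref{strongEHimpliesPR}, with heredity of the family being immediate as you observe. Your additional bookkeeping of the constant $c_1 = 2\log_2(1/c)$ is consistent with the proof of Lemma~\ref{strongEHimpliesPR}, so the argument is correct and essentially identical to the paper's.
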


We will frequently use the following simple lemma in this paper.  

\begin{lemma}\label{subhom}
    Let $G = (V,E)$ be a graph on $n$ vertices.  If the edge density of $G$ is at most $\varepsilon$, then any induced subgraph on $\delta n$ vertices has edge density at most $2\varepsilon/\delta^2$.  Likewise, if  the edge density of $G$ is at least $1 - \varepsilon$, then any induced subgraph on $\delta n$ vertices has edge density at least $1 - 2\varepsilon/\delta^2$.
\end{lemma}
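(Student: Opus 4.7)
The plan is to prove the first statement by a direct edge-counting argument and then derive the second statement by passing to the complement. So first I would write $|E| \leq \varepsilon \binom{n}{2}$ as the hypothesis, pick any induced subgraph $H$ on a vertex subset $S$ with $|S| = \delta n$, and simply observe that $|E(H)| \leq |E|$ since $H$ is an induced subgraph. The edge density of $H$ is then bounded above by
\[
\frac{|E|}{\binom{\delta n}{2}} \;\leq\; \varepsilon \cdot \frac{\binom{n}{2}}{\binom{\delta n}{2}} \;=\; \varepsilon \cdot \frac{n(n-1)}{\delta n (\delta n - 1)}.
\]

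Next I would simplify the ratio. Since $n-1 \leq n$ and (assuming $\delta n \geq 2$, the only nontrivial case) $\delta n - 1 \geq \delta n / 2$, the right-hand side is at most
\[
\varepsilon \cdot \frac{n}{\delta \cdot (\delta n/2)} \;=\; \frac{2\varepsilon}{\delta^2},
\]
which gives the first statement. The case $\delta n < 2$ is trivial since then the induced subgraph has at most one vertex and no edges.

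For the second statement, I would simply apply the first statement to the complement $\overline{G}$, which has edge density at most $\varepsilon$. Any induced subgraph of $G$ on $\delta n$ vertices corresponds to an induced subgraph of $\overline{G}$ on $\delta n$ vertices, which by the first part has edge density at most $2\varepsilon/\delta^2$, so the original induced subgraph of $G$ has edge density at least $1 - 2\varepsilon/\delta^2$. There is no real obstacle here; the only thing to be careful about is the minor edge case $\delta n \leq 1$, which is handled trivially, and making sure the constant $2$ is sufficient (it is, via the bound $\delta n - 1 \geq \delta n/2$ whenever $\delta n \geq 2$).
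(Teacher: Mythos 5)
Your proposal is correct and follows essentially the same route as the paper: the induced subgraph has at most as many edges as $G$ itself, and comparing $\binom{n}{2}$ with $\binom{\delta n}{2}$ yields the factor $2/\delta^2$, with the dense case handled by the symmetric/complement argument. The only difference is cosmetic: you make the edge case $\delta n<2$ and the bound $\delta n-1\ge \delta n/2$ explicit, which the paper glosses over.
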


\begin{proof}
Suppose the edge density of $G$ is at most $\varepsilon n^2$ edges.  Hence, any induced subgraph on $\delta n$ vertices will have at most $(2\varepsilon/\delta^2)(\delta n)^2/2$ edges, which implies that the edge density of the induced subgraph is at most $2\varepsilon/\delta^2$.  A symmetric argument follows in the case when the edge density of $G$ is at least $1-\varepsilon$.  \end{proof}

\section{Proof of Theorem \ref{main} -- for double grounded red curves}\label{dbground}

Given a collection of curves $\mathcal{C}$ in the plane, we say that $\mathcal{C}$ is \emph{double grounded} if there are two distinct curves $\gamma_1$ and $\gamma_2$ such that for each curve $\alpha \in \mathcal{C}$, $\alpha$ has one endpoint on $\gamma_1$ and the other on $\gamma_2$, and the interior $\alpha$ is disjoint from $\gamma_1$ and $\gamma_2$.  Throughout this paper, for simplicity, we will always assume that both endpoints of each of our curves have distinct $x$-coordinates.  We refer to the endpoint of a curve with the smaller (larger) $x$-coordinate as its \emph{left (right) endpoint}.    The aim of this section is to prove Theorem \ref{main} in the special case where one of the color classes (the red one, say) consists of double grounded curves.
 
A curve in the plane is called \emph{$x$-monotone} if every vertical line intersects it in at most one point.  We start by considering double grounded $x$-monotone curves, and at the end of this section, we will remove the $x$-monotone condition.  We will need the following result, known as the cutting-lemma for $x$-monotone curves. See, for example, Proposition 2.11 in \cite{hp}.

 \begin{lemma}[The Cutting Lemma]\label{xcut}

Let $\mathcal{C}$ be a collection of $n$ double grounded $x$-monotone curves, whose grounds that are disjoint vertical segments $\gamma_1$ and $\gamma_2$, and let $r > 1$ be a parameter.  Then $\mathbb{R}^2\setminus (\gamma_1\cup \gamma_2)$ can be subdivided into $t$ connected regions $\Delta_1,\ldots, \Delta_t$, such that the interior of each $\Delta_i$ is intersected by at most $n/r$ curves from $\mathcal{C}$, and we have $t  = O(r^2)$.        
 \end{lemma}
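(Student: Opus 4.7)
I would follow the standard Clarkson--Shor random sampling paradigm, adapted to $x$-monotone pseudo-segments. The idea is to pick a random sample of curves, chop the plane according to a \emph{vertical decomposition} of its arrangement, and then refine any cell that happens to be crossed by too many curves of $\mathcal{C}$. The target bound $O(r^2)$ on the cell count matches the bound for the arrangement of the sample itself, which is the best one can hope for; the nontrivial part is achieving crossing number exactly $n/r$ rather than $O((n/r)\log r)$.

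First I would choose a random sample $\mathcal{R}\subseteq\mathcal{C}$ of size $s=\Theta(r)$ uniformly at random and consider the arrangement of $\mathcal{R}\cup\{\gamma_1,\gamma_2\}$. Since $\mathcal{C}$ is a collection of pseudo-segments, any two curves in $\mathcal{R}$ meet at most once, so the arrangement has at most $\binom{s}{2}+O(s)=O(s^2)$ vertices and faces. I would then perform the vertical decomposition: from each endpoint on $\gamma_1\cup\gamma_2$ and from each crossing among curves of $\mathcal{R}$, extend vertical rays upward and downward until they hit another curve of $\mathcal{R}\cup\{\gamma_1,\gamma_2\}$ or go to infinity. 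Because each curve of $\mathcal{R}$ is $x$-monotone, the resulting cells are ``pseudo-trapezoids'' bounded by at most two vertical segments and at most two curve arcs, and each such cell is determined by a constant-size ``defining set'' of curves from $\mathcal{R}$ (its bounding arcs together with the pair of curves stopping the two vertical rays). The total number of cells is $O(s^2)=O(r^2)$.

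Next I would invoke the Clarkson--Shor framework to control the crossing number of each cell. The standard moment analysis shows that, with positive probability, the vertical decomposition of $\mathcal{R}$ has every cell crossed by at most $c(n/s)\log s$ curves of $\mathcal{C}$ for some absolute constant $c$. To remove the logarithmic factor I would apply the usual two-level refinement: for each ``heavy'' cell $\Delta$ crossed by $k>n/r$ curves of $\mathcal{C}$, re-decompose $\Delta$ using an independent random sample of its crossing curves of size $\Theta(k/(n/r))$ and take the resulting secondary vertical decomposition inside $\Delta$. The Clarkson--Shor bound on the expected sum $\sum_\Delta k_\Delta^{\,2}$ (where $k_\Delta$ is the number of curves crossing $\Delta$) keeps the total number of refined cells at $O(r^2)$. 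After this refinement every cell meets at most $n/r$ curves of $\mathcal{C}$, yielding the desired $(1/r)$-cutting.

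The main obstacle is checking that the combinatorial inputs required by Clarkson--Shor genuinely hold in the pseudo-segment setting: namely, that the cells of a vertical decomposition of $x$-monotone pseudo-segments are each bounded by $O(1)$ features and have defining sets of bounded size, and that the arrangement itself has only $O(s^2)$ complexity. The first depends essentially on $x$-monotonicity, which ensures that a vertical ray enters and exits a cell in a controlled way and that a pseudo-trapezoid has a well-defined top/bottom arc; the second depends on the pseudo-segment hypothesis that any two curves cross at most once. With these two facts in hand, the Clarkson--Shor random sampling analysis goes through verbatim, exactly as in the standard presentation in Har-Peled's book.
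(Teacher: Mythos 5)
The paper gives no proof of this lemma at all---it is quoted directly from Proposition 2.11 of Har-Peled \cite{hp}---and your Clarkson--Shor construction (random sample of size $\Theta(r)$, vertical decomposition of the arrangement of $x$-monotone pseudo-segments, $O(1)$-size defining sets, then two-level refinement of heavy cells controlled by the moment bound) is exactly the standard argument behind that cited result, so your route is essentially the intended one. One minor quantitative slip: to guarantee that every subcell of a heavy cell $\Delta$ with conflict size $k_\Delta$ meets at most $n/r$ curves, the secondary sample inside $\Delta$ should have size $\Theta\left((k_\Delta r/n)\log(k_\Delta r/n)\right)$ (or one iterates), not just $\Theta(k_\Delta r/n)$; the extra logarithm is harmless because the Chazelle--Friedman bound $\mathbb{E}\bigl[\sum_\Delta (k_\Delta r/n)^c\bigr]=O(r^2)$ absorbs it, so the total cell count remains $O(r^2)$.
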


In the proof of the following lemma and throughout the paper, we will implicitly use the Jordan curve theorem.

\begin{lemma}\label{xmono}
 Let $\mathcal{R}$ be a set of $n$ red double grounded $x$-monotone curves, whose grounds are disjoint vertical segments $\gamma_1$ and $\gamma_2$.  Let $\mathcal{B}$ be a set of $n$ blue curves (not necessarily $x$-monotone) such that every blue curve in $\mathcal{B}$ is disjoint from grounds $\gamma_1$ and $\gamma_2$, and suppose that $\mathcal{R}\cup \mathcal{B}$ is a collection of pseudo-segments.  
 
 Then  then there are subsets $\mathcal{R}'\subset \mathcal{R}$ and $\mathcal{B}'\subset \mathcal{B}$ such that $|\mathcal{R}'|,|\mathcal{B}'| \geq \Omega(n)$, and either every curve in $\mathcal{R'}$ crosses every curve in $\mathcal{B}'$, or every curve in $\mathcal{R}'$ is disjoint from every curve in $\mathcal{B}'$.

\end{lemma}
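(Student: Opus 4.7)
The plan is to apply the Cutting Lemma (Lemma~\ref{xcut}) to $\mathcal{R}$ and split into two cases based on how each blue curve sits in the resulting cell decomposition. The easy case is when most blues lie inside a single cell, which yields the disjoint conclusion immediately; the harder case is when most blues are ``large'' in that they span multiple cells, where a parity argument using the pseudo-segment property will pin down which reds are uniformly crossed or uniformly avoided by these blues.

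First, fix a large absolute constant $K$ and apply Lemma~\ref{xcut} to $\mathcal{R}$ with parameter $K$, obtaining $t = O(K^2)$ cells $\Delta_1,\ldots,\Delta_t$ partitioning $\mathbb{R}^2\setminus(\gamma_1\cup\gamma_2)$, each of whose interiors is met by at most $n/K$ reds. Because the reds are $x$-monotone and double grounded, such a cutting can be arranged so that every cell boundary is a union of arcs of reds and pieces of the grounds $\gamma_1,\gamma_2$. Call a blue \emph{small} if it is contained in the closure of a single cell, and \emph{large} otherwise; since blues are disjoint from the grounds, any time a large blue crosses a cell boundary it does so at an interior point of some red, so each large blue crosses at least one red.

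If at least $n/2$ of the blues are small, pigeonhole on the $t = O(1)$ cells gives a cell $\Delta$ containing $\Omega(n)$ small blues $\mathcal{B}'$, and taking $\mathcal{R}'$ to be the $\geq (1-1/K)n = \Omega(n)$ reds not meeting the interior of $\Delta$ produces a linear-sized disjoint pair. Otherwise at least $n/2$ blues are large. Classify each large blue by the unordered pair $(\Delta_a,\Delta_b)$ (possibly $a=b$) of cells whose closures contain its two endpoints; by pigeonhole on the $O(t^2)$ classes, some class contains $\Omega(n)$ large blues, which I take as $\mathcal{B}'$. For any red $\alpha\in\mathcal{R}$ not entering $\Delta_a$ or $\Delta_b$, I claim every $\beta\in\mathcal{B}'$ has the same crossing behavior with $\alpha$: either all cross $\alpha$ or none do. This uses a parity argument --- $\beta$ and $\alpha$ meet in at most one point, the endpoints of $\beta$ lie in the fixed cells $\Delta_a,\Delta_b$, and $\alpha$ together with the appropriate arcs of the grounds separates the strip between $\gamma_1,\gamma_2$ into two sides, so whether $\beta$ crosses $\alpha$ depends only on whether $\Delta_a,\Delta_b$ lie on the same or opposite sides of $\alpha$. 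The $\geq (1-2/K)n = \Omega(n)$ reds not meeting $\Delta_a,\Delta_b$ thus split into ``all-cross'' and ``all-disjoint'' reds, and a final pigeonhole gives $\Omega(n)$ reds of one type, which is taken as $\mathcal{R}'$.

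The main obstacle is making the parity step fully rigorous: a single red $\alpha$ does not separate the plane (nor does $\alpha\cup\gamma_1\cup\gamma_2$, since the grounds are finite segments), so a blue whose endpoints lie on opposite sides of $\alpha$ in the strip could in principle avoid crossing $\alpha$ by routing outside the strip and around a ground. The resolution is that any portion of a blue outside the strip is disjoint from every red (reds live entirely in the strip), so all red--blue intersections occur on in-strip portions of $\beta$; carefully tracking the endpoints of each in-strip arc of $\beta$ together with the pseudo-segment constraint $|\beta\cap\alpha|\leq 1$ then forces the total crossing count with $\alpha$ to be determined by $(\Delta_a,\Delta_b)$ alone, and the sub-case $a=b$ is handled by the same bookkeeping applied to the deepest cell the blue reaches.
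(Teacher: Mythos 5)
Your reduction steps (cutting with a constant parameter, pigeonholing the blue endpoints into two fixed cells $\Delta_a,\Delta_b$, and restricting to the $\Omega(n)$ reds avoiding those cells) match the paper's setup, and the ``small blues'' case is fine. The gap is in the parity step, and your own proposed resolution does not close it. For a single red $\alpha$, the set $\gamma_1\cup\gamma_2\cup\alpha$ contains no closed curve (it is a tree-like union of three arcs), so its complement is connected and ``the two sides of $\alpha$'' is not well defined; consequently the crossing behavior of a blue with $\alpha$ is genuinely \emph{not} determined by the pair of cells containing its endpoints. Concretely, take $\gamma_1,\gamma_2$ to be short vertical segments far apart, $\alpha$ a roughly horizontal red joining them, and two points $p,q$ just below and just above the middle of $\alpha$: one blue from $p$ to $q$ crosses $\alpha$ once, while another blue from (essentially) $p$ to $q$ first dips below the lower endpoint of $\gamma_2$, travels around the outside of the strip, re-enters above the upper endpoint of $\gamma_2$, and reaches $q$ without touching $\alpha$ or either ground. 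Both blues respect the pseudo-segment condition and have their endpoints in the same cells, yet one crosses $\alpha$ and the other does not. ``Tracking the endpoints of the in-strip arcs'' cannot rescue this, because those arcs may begin and end at heights above or below the (finite) grounds, where nothing constrains them.

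The paper's proof fixes exactly this issue by introducing a \emph{reference} red curve $\alpha_0\in\mathcal{R}_1$ (also avoiding $\Delta_i,\Delta_j$) and applying the Jordan curve theorem to $\gamma_1\cup\gamma_2\cup\alpha_0\cup\alpha$, which, since both reds join the two grounds, does contain a closed curve and separates the plane into two or three components. Because a blue avoids the grounds and meets each of $\alpha_0,\alpha$ at most once, the component containing each of $\Delta_i,\Delta_j$ determines the parity of its crossings with $\alpha_0\cup\alpha$: in the resulting cases one gets ``$\beta$ crosses $\alpha$ iff $\beta$ crosses $\alpha_0$,'' ``iff $\beta$ is disjoint from $\alpha_0$,'' or ``$\beta$ always crosses $\alpha$.'' A pigeonhole over the reds (which case they fall in) and then over the blues (whether they cross $\alpha_0$) yields the homogeneous linear-sized pair. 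To repair your argument you would need to add this comparison against a fixed $\alpha_0$ (or some other device that produces an actual separating closed curve); as written, the step ``every $\beta\in\mathcal{B}'$ has the same crossing behavior with $\alpha$'' is false. A minor additional inaccuracy: the cells produced by the cutting lemma are bounded not only by red arcs and pieces of the grounds but also by auxiliary (e.g.\ vertical) boundary pieces, so a large blue need not cross any red; this remark is inessential, but it should be dropped.
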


\begin{proof}   

Let $P$ be the set of left-endpoints of the curves in $\mathcal{B}$.  We apply Lemma \ref{xcut} to $\mathcal{R}$ with parameter $r=4$ to obtain a subdivision

$$\mathbb{R}^2\setminus (\gamma_1\cup\gamma_2) = \Delta_1\cup \cdots \cup \Delta_t,$$

\noindent such that for each $\Delta_i$, the interior of $\Delta_i$ intersects at most $n/4$ members in $\mathcal{R}$, and $t \leq c_04^2$ where $c_0$ is an absolute constant from Lemma \ref{xcut}.  By the pigeonhole principle, there is a region $\Delta_i$ such that $\Delta_i$ contains at least $n/c_04^2$ points from $P$.   Let $\mathcal{B}_0 \subset \mathcal{B}$ be the set of blue curves whose left endpoints are in $\Delta_i$.  Hence $|\mathcal{B}_0| = \Omega(n)$.

Let $Q$ be the right endpoints of the curves in $\mathcal{B}_0$.  Using the same subdivision described above, there is a region $\Delta_j$ such that $\Delta_j$ contains at least $|Q|/(c_04^2) \geq n/(c_04^2)^2$ points from $Q$.  Let $\mathcal{B}_1\subset \mathcal{B}_0$ be the set of blue curves with their left endpoint in $\Delta_i$ and right endpoint in $\Delta_j$.  Let $\mathcal{R}_1\subset \mathcal{R}$ consists of all red curves that do not intersect the interior of $\Delta_i$ and $\Delta_j$.   Lemma \ref{xcut} implies that

$$|\mathcal{R}_1| \geq n - \frac{2n}{4} = \frac{n}{2},$$

    \noindent and $|\mathcal{B}_1| = \Omega(n).$ Recall that each blue curve in $\mathcal{B}_1$ does not intersect the grounds $\gamma_1$ nor $\gamma_2$.  Fix an arbitrary curve $\alpha_0 \in \mathcal{R}_1$.  The proof now falls into the following cases.

\medskip

\noindent \emph{Case 1.}  Suppose at least $|\mathcal{R}_1|/2$ curves in $\mathcal{R}_1$ are disjoint from $\alpha_0$.  Let $\mathcal{R}_2 \subset \mathcal{R}_1$ be the set of red curves disjoint from $\alpha_0$.  For each $\alpha \in \mathcal{R}_2\setminus \{\alpha_0\}$, 

$$\mathbb{R}^2\setminus (\gamma_1\cup\gamma_2\cup \alpha_0\cup \alpha),$$

\noindent consists of two connected components, one bounded and the other unbounded.

 \begin{figure}
  \centering
\subfigure[Case 1.a.]{\label{figc1a}\includegraphics[width=0.25\textwidth]{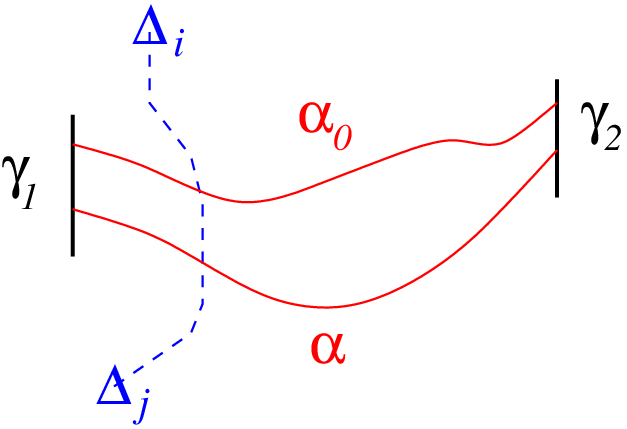}}  \hspace{2cm}
  \subfigure[Case 1.b.]{\label{figc1b}\includegraphics[width=0.25\textwidth]{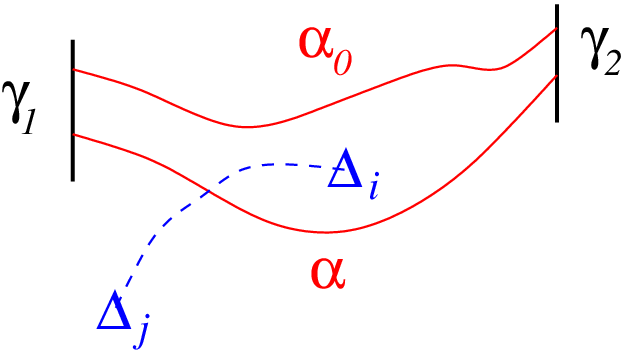}}
 \caption{Case 1, $\alpha_0$ and $\alpha$ are disjoint.}\label{figcase1}
\end{figure}

\medskip

\noindent \emph{Case 1.a.}  Suppose for at least $|\mathcal{R}_2|/2$ red curves  $\alpha \in \mathcal{R}_2$,  both $\Delta_i$ and $\Delta_j$ lie in the same connected component of $\mathbb{R}^2\setminus (\gamma_1\cup\gamma_2\cup \alpha_0\cup \alpha)$.  See Figure \ref{figc1a}. Let $\mathcal{R}_3\subset \mathcal{R}_2$ be the collection of such red curves.  Then for each $\alpha \in \mathcal{R}_3$, each blue curve $\beta \in \mathcal{B}_1$ crosses $\alpha$ if and only if $\beta$ crosses $\alpha_0$.  Hence, there is a subset $\mathcal{B}_2 \subset \mathcal{B}_1$ of size at least $\Omega(n)$, such that either every blue curve in $\mathcal{B}_2$ crosses every red curve in $\mathcal{R}_3$, or every blue curve in $\mathcal{B}_2$ is disjoint from every red curve in $\mathcal{R}_3$.  Moreover, $|\mathcal{R}_3| = \Omega(n)$ and we are done.

\medskip

\noindent \emph{Case 1.b.}  Suppose for at least $|\mathcal{R}_2|/2$ red curves  $\alpha \in \mathcal{R}_2$,  regions $\Delta_i$ and $\Delta_j$ lie in different connected component of  $\mathbb{R}^2\setminus (\gamma_1\cup\gamma_2\cup \alpha_0\cup \alpha)$.  See Figure \ref{figc1b}.  Similar to above, let $\mathcal{R}_3\subset \mathcal{R}_2$ be the collection of such red curves.  By the pseudo-segment condition, for each $\alpha \in \mathcal{R}_3$, each blue curve $\beta \in \mathcal{B}_1$ crosses $\alpha$ if and only if $\beta$ is disjoint from $\alpha_0$.  Hence, there is a subset $\mathcal{B}_2 \subset \mathcal{B}_1$ of size $\Omega_r(n)$, such that either every blue curve in $\mathcal{B}_2$ crosses every red curve in $\mathcal{R}_3$, or every blue curve in $\mathcal{B}_2$ is disjoint from every red curve in $\mathcal{R}_3$.  Moreover, $|\mathcal{R}_3| = \Omega(n)$ and we are done.

\medskip

\noindent \emph{Case 2.}  Suppose at least $|\mathcal{R}_1|/2$ curves in $\mathcal{R}_1$ cross $\alpha_0$.  Let $\mathcal{R}_2 \subset \mathcal{R}_1$ be the set of red curves that crosses $\alpha_0$.  For each $\alpha \in \mathcal{R}_2\setminus \{\alpha_0\}$, 

$$\mathbb{R}^2\setminus (\gamma_1\cup\gamma_2\cup \alpha_0\cup \alpha),$$

\noindent consists of three connected components, two of which are bounded and the other unbounded.  

\medskip

\noindent \emph{Case 2.a.}  Suppose for at least $|\mathcal{R}_2|/3$ red curves  $\alpha \in \mathcal{R}_2$,  Both $\Delta_i$ and $\Delta_j$ lie in the same connected component of  $\mathbb{R}^2\setminus (\gamma_1\cup\gamma_2\cup \alpha_0\cup \alpha)$.  See Figure \ref{figc2a}. Let $\mathcal{R}_3\subset \mathcal{R}_2$ be the collection of such red curves.  By the pseudo-segment condition, for each $\alpha \in \mathcal{R}_3$, each blue curve $\beta \in \mathcal{B}_1$ crosses $\alpha$ if and only if $\beta$ crosses $\alpha_0$.  Hence, there is a subset $\mathcal{B}_2 \subset \mathcal{B}_1$ of size at least $\Omega(n)$, such that either every blue curve in $\mathcal{B}_2$ crosses every red curve in $\mathcal{R}_3$, or every blue curve in $\mathcal{B}_2$ is disjoint from every red curve in $\mathcal{R}_3$.  Moreover, $|\mathcal{R}_3| = \Omega(n)$.

\medskip

\noindent \emph{Case 2.b.}  Suppose for at least $|\mathcal{R}_2|/3$ red curves  $\alpha \in \mathcal{R}_2$,  regions $\Delta_i$ and $\Delta_j$ lie in different bounded connected components of  $\mathbb{R}^2\setminus (\gamma_1\cup\gamma_2\cup \alpha_0\cup \alpha)$.  See Figure \ref{figc2b}. Let $\mathcal{R}_3\subset \mathcal{R}_2$ be the collection of such red curves.  Then for each $\alpha \in \mathcal{R}_3$, every blue curve $\beta \in \mathcal{B}_1$ crosses $\alpha$. Since $|\mathcal{R}_3| = \Omega(n)$ and $|\mathcal{B}_1| = \Omega(n)$.

\medskip

\noindent \emph{Case 2.c.}  Suppose for at least $|\mathcal{R}_2|/3$ red curves  $\alpha \in \mathcal{R}_2$,  regions $\Delta_i$ and $\Delta_j$ lie in different connected components of $\mathbb{R}^2\setminus (\gamma_1\cup\gamma_2\cup \alpha_0\cup \alpha)$, one of which is bounded and the other unbounded.  See Figure~\ref{figc2c}.  Let $\mathcal{R}_3\subset \mathcal{R}_2$ be the collection of such red curves.  By the pseudo-segment condition, for each $\alpha \in \mathcal{R}_3$, each blue curve $\beta \in \mathcal{B}_1$ crosses $\alpha$ if and only if $\beta$ is disjoint from $\alpha_0$.  Hence, there is a subset $\mathcal{B}_2 \subset \mathcal{B}_1$ of size $\Omega(n)$, such that either every blue curve in $\mathcal{B}_2$ crosses every red curve in $\mathcal{R}_3$, or every blue curve in $\mathcal{B}_2$ is disjoint from every red curve in $\mathcal{R}_3$.  Moreover, $|\mathcal{R}_3|  = \Omega(n)$, and we are done.  \end{proof}

 \begin{figure}
  \centering
\subfigure[Case 2.a.]{\label{figc2a}\includegraphics[width=0.25\textwidth]{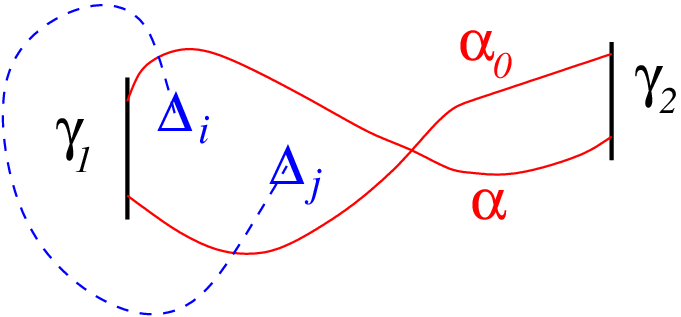}}  \hspace{1cm}
\subfigure[Case 2.b.]{\label{figc2b}\includegraphics[width=0.25\textwidth]{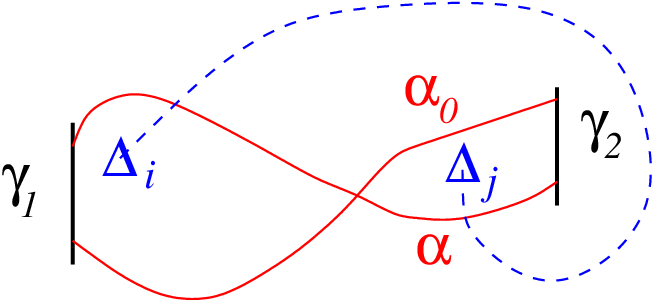}}  \hspace{1cm}
  \subfigure[Case 2.c.]{\label{figc2c}\includegraphics[width=0.25\textwidth]{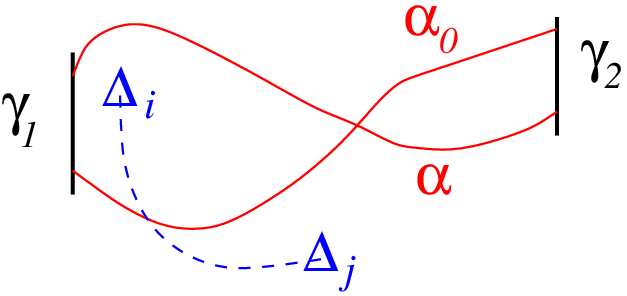}}
 \caption{Case 2, $\alpha_0$ and $\alpha$ cross.}\label{figcase2}
\end{figure}

Recall that a \emph{pseudoline} is an unbounded arc in $\mathbb{R}^2$, whose complement is disconnected.  An \emph{arrangement of pseudolines} is a set of pseudolines such that every pair meets exactly once, and no three members have a point in common.  A classic result of Goodman \cite{Go} states that every arrangement of pseudolines is isomorphic to an arrangement of \emph{wiring diagrams} (bi-infinite $x$-monotone curves).  Moreover, Goodman and Pollack showed the following.

 \begin{theorem}[\cite{GoP}]\label{GoP}
Every arrangement of pseudolines can be continuously deformed (through isomorphic arrangements) to a wiring diagram.     
 \end{theorem}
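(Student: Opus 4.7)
The plan is to leverage the already-cited Goodman wiring-diagram theorem, which produces a wiring diagram $\mathcal{W}$ combinatorially isomorphic to the given pseudoline arrangement $\mathcal{A}=\{\ell_1,\ldots,\ell_n\}$. The remaining task is to construct a continuous family of pseudoline arrangements $\{H_t\}_{t\in[0,1]}$ with $H_0=\mathcal{A}$ and $H_1=\mathcal{W}$ such that every intermediate $H_t$ is a pseudoline arrangement of the same combinatorial type as $\mathcal{A}$. I would first fix a bottom-to-top labeling of the pseudolines in $\mathcal{A}$ compatible with the labeling of $\mathcal{W}$. After a projective transformation sending a suitable unbounded face of $\mathcal{A}$ to infinity, I may assume that every $\ell_i$ meets a common vertical transversal $\tau$ in an order matching the order of the wires of $\mathcal{W}$.

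Next I would continuously deform $\mathcal{A}$ into an $x$-monotone arrangement through isomorphic arrangements. The idea is to straighten each $\ell_i$ locally. Around each vertex $v$ of the arrangement, fix a small open disk $D_v$ whose closure meets only the two pseudolines crossing at $v$; inside $D_v$, isotope the two incident pseudolines to two transverse straight segments while keeping them fixed on $\partial D_v$. Between vertices, each arc of $\ell_i$ lies in the interior of an open topological disk—a face of the arrangement of the other pseudolines—so by the Schoenflies theorem it is ambient-isotopic rel endpoints to an $x$-monotone arc. Combining these local isotopies via a partition of unity subordinate to an open cover of the plane yields a continuous family of pseudoline arrangements, all combinatorially isomorphic to $\mathcal{A}$, terminating in an $x$-monotone arrangement $\mathcal{A}'$.

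Finally, with both $\mathcal{A}'$ and $\mathcal{W}$ realized as $n$-tuples of graphs $y=f_i^{(s)}(x)$ for $s\in\{0,1\}$ sharing the same combinatorial type (equivalently, the same allowable sequence along $\tau$), I would deform $\mathcal{A}'$ into $\mathcal{W}$ by continuously sliding each crossing point along the $x$-axis from its $\mathcal{A}'$-position to its $\mathcal{W}$-position while interpolating each pseudoline piecewise-linearly between consecutive crossings. Since the combinatorial type is preserved at every time $t$, the linear order of the values $f_i^{(t)}(x)$ at any fixed $x$ is dictated by the allowable sequence, so any two interpolated pseudolines still cross exactly once. Concatenating the two families of isotopies produces the required deformation from $\mathcal{A}$ to $\mathcal{W}$ through isomorphic pseudoline arrangements.

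The main obstacle is the straightening stage: one must ensure that the local deformation inside each $D_v$ and the isotopies of the arcs connecting consecutive vertices can be stitched together globally without ever creating a new intersection with a third pseudoline or destroying the single intersection at $v$. This requires choosing the radii of the $D_v$ uniformly small relative to the minimum distance between nonincident pseudolines, and choosing the supports of the partition of unity so that they remain within the appropriate faces of the auxiliary arrangements. Once these geometric choices are made carefully enough, the two isotopies combine to give the desired continuous deformation, completing the proof.
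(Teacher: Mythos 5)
This statement is quoted in the paper as an external result of Goodman and Pollack \cite{GoP}; the paper gives no proof, so your sketch can only be judged on its own merits, and as it stands it has genuine gaps at exactly the points where the real content of the theorem lies. The most serious one is the straightening stage. Gluing local isotopies ``via a partition of unity'' is not a valid construction: partitions of unity let you blend functions or vector fields, but a pointwise convex combination of homeomorphisms need not be injective, so the blended map need not be an ambient isotopy at all; making this rigorous requires something like the isotopy extension theorem together with a global argument that no new crossings are created, which is precisely the difficulty. Moreover, the claim that each arc of $\ell_i$ between consecutive vertices can be isotoped rel endpoints to an $x$-monotone arc inside a face of the arrangement of the other pseudolines is false in general: a pseudoline may backtrack in $x$, so the two endpoints of such an arc need not even be in the correct horizontal order, and the face need not contain any $x$-monotone arc joining them. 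Turning an arbitrary pseudoline arrangement into an isomorphic $x$-monotone one cannot be done by local modifications rel the vertex positions; it requires a global sweeping argument, which is essentially the machinery Goodman and Pollack develop.

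The final interpolation step is also circular. You slide the crossing points from their positions in $\mathcal{A}'$ to their positions in $\mathcal{W}$ and justify that the intermediate families are still arrangements of the same type ``since the combinatorial type is preserved at every time $t$'' --- but that is the very thing to be proved. Two isomorphic wiring diagrams can realize different allowable sequences (different $x$-orders of the crossings), and naively sliding crossings independently can swap two crossings that lie on a common wire, momentarily forcing a tangency, a triple point, or a non-monotone wire. What is needed here is a proof that any two wiring diagrams of the same isomorphism type are connected through isomorphic arrangements (e.g., by a sequence of elementary moves on allowable sequences realized by small deformations), and that claim is not addressed. So while the overall strategy (straighten, then interpolate between wiring diagrams) is a reasonable outline, each of its two halves currently assumes a statement that is comparable in depth to the theorem itself; the honest course in the context of this paper is to cite \cite{GoP}, as the authors do.
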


We also need the following simple lemma.

\begin{lemma}\label{sweep}
    Given a finite linearly ordered set whose elements are colored red or blue, we can select half of the red elements and half of the blue elements such that all of the selected elements of one color come before all of the selected elements of the other color.
\end{lemma}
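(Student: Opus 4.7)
The plan is to prove Lemma~\ref{sweep} by a one-parameter sweeping argument. Suppose the ordered set contains $R$ red and $B$ blue elements, and interpret ``half'' as $\lfloor R/2 \rfloor$ and $\lfloor B/2 \rfloor$ respectively; any stronger rounding can be absorbed at the end. For a position $p$ in the linear order (thought of as a cut point between two consecutive elements, or at an end), let $r(p)$ and $b(p)$ denote the number of reds and blues strictly before $p$. I want to produce a cut $p$ so that either (i) at least $\lfloor R/2 \rfloor$ reds lie before $p$ and at least $\lfloor B/2 \rfloor$ blues lie after $p$, or (ii) the symmetric statement with the colors swapped.

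The key step is to pick a single distinguished cut: take $p^{\ast}$ to be the point immediately after the $\lfloor R/2\rfloor$-th red element. At this cut, exactly $\lfloor R/2\rfloor$ reds lie before $p^{\ast}$ and the remaining $\lceil R/2\rceil \geq \lfloor R/2\rfloor$ reds lie after it. Now split into two cases based on where the blues fall. If $b(p^{\ast}) \leq \lfloor B/2\rfloor$, then at least $B - \lfloor B/2\rfloor = \lceil B/2\rceil \geq \lfloor B/2\rfloor$ blues sit after $p^{\ast}$, and I select the $\lfloor R/2\rfloor$ reds before $p^{\ast}$ together with any $\lfloor B/2\rfloor$ of the blues after; all chosen reds precede all chosen blues. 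Otherwise $b(p^{\ast}) \geq \lfloor B/2\rfloor + 1$, so I instead select any $\lfloor B/2\rfloor$ of the blues before $p^{\ast}$ together with any $\lfloor R/2\rfloor$ of the $\lceil R/2\rceil$ reds after $p^{\ast}$; now all chosen blues precede all chosen reds.

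There is no real obstacle here: the argument is a one-line sweep, and the only subtlety is the parity bookkeeping to make sure that ``half'' is attained in both colors simultaneously. The cut $p^{\ast}$ is forced in the red coordinate, and the dichotomy on $b(p^{\ast})$ handles the blue side automatically, so the two cases exhaust all possibilities and yield the desired selection.
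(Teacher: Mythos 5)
Your argument is correct: cutting immediately after the $\lfloor R/2\rfloor$-th red element and casing on whether at least $\lfloor B/2\rfloor$ blues lie before or after that cut does give the required selection, with the parity bookkeeping handled properly in both cases. The paper states Lemma~\ref{sweep} as a simple lemma and omits its proof entirely, and your median-cut sweep is exactly the kind of argument the authors intend, so there is nothing to reconcile beyond noting that your write-up fills in the omitted details (including the trivial degenerate cases $R=0$ or $B=0$, where the cut sits at the start of the order).
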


We are now ready to establish the main result of this section:

\begin{theorem}\label{thmdg}
Let $\mathcal{R}$ be a set of $n$ red double grounded curves with grounds $\gamma_1$ and $\gamma_2$, where $\gamma_1$ and $\gamma_2$ cross each other.  Let $\mathcal{B}$ be a set of $n$ blue curves such that $\mathcal{R}\cup \mathcal{B}\cup \{\gamma_1, \gamma_2\}$ is a collection of pseudo-segments.   

Then there are subsets $\mathcal{R}'\subset \mathcal{R}$ and $\mathcal{B}'\subset \mathcal{B}$ such that $|\mathcal{R}'|,|\mathcal{B}'| \geq \Omega(n)$, and either every curve in $\mathcal{R'}$ crosses every curve in $\mathcal{B}'$, or every curve in $\mathcal{R}'$ is disjoint from every curve in $\mathcal{B}'$.
\end{theorem}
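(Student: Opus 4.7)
The plan is to reduce Theorem~\ref{thmdg} to Lemma~\ref{xmono} by a sequence of topological simplifications. First, since $\{\gamma_1,\gamma_2\}$ can be extended to a two-pseudoline arrangement, Theorem~\ref{GoP} lets me continuously deform it through isomorphic arrangements into two crossing straight lines; an ambient isotopy realizing this deformation carries the red and blue curves along while preserving the pseudo-segment property throughout. I therefore assume that $\gamma_1\subset\{y=0\}$ and $\gamma_2\subset\{x=0\}$ cross at the origin, and write $\gamma_i^+$ and $\gamma_i^-$ for the two halves of $\gamma_i$.

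Since each red curve meets each ground at a single endpoint, the pseudo-segment condition forces its interior to avoid both axes, so every red curve lies in a single closed quadrant. Pigeonhole produces a subfamily $\mathcal{R}_0\subset\mathcal{R}$ of size at least $n/4$ lying in one quadrant, which I may assume to be the first quadrant $Q_1$. For the blue curves, I classify each $\beta\in\mathcal{B}$ by its combinatorial interaction with $Q_1$ -- whether $\beta$ crosses $\gamma_1^+$ or $\gamma_2^+$ (at most once each by the pseudo-segment condition), and whether its two endpoints lie in $Q_1$ -- and use pigeonhole to obtain a linear-sized subfamily $\mathcal{B}_0$ of one type. If in this type $\beta$ avoids $Q_1$ altogether, then $(\mathcal{R}_0,\mathcal{B}_0)$ is the desired all-disjoint pair; otherwise each $\beta\in\mathcal{B}_0$ has a well-defined arc $\beta\cap Q_1$ whose intersections with $\mathcal{R}_0$ account for all red--blue intersections. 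Next I apply a polar-logarithmic homeomorphism on $Q_1\setminus\{0\}$ to identify $Q_1$ with an infinite vertical strip, turning $\gamma_1^+$ and $\gamma_2^+$ into two disjoint vertical lines that will serve as the grounds demanded by Lemma~\ref{xmono}. A slight truncation of each blue arc at its endpoints on the new grounds makes the blue family disjoint from the grounds, without altering intersections with red curves, whose interiors already avoid the grounds.

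The remaining obstacle, and the main one, is that the red curves in the strip need not be $x$-monotone, which is crucial for Lemma~\ref{xmono}. To overcome this, I would use the fact that two double-grounded pseudo-chords on parallel grounds cross if and only if their four endpoints interleave cyclically on the boundary of the strip -- so the combinatorial type of the arrangement $\mathcal{R}_0$ is determined entirely by the endpoint positions and coincides with that of the family of straight segments on the same endpoints. Consequently, there exists a homeomorphism of the strip fixing the grounds pointwise that sends each red curve to the straight segment between its endpoints; this can be produced, for example, by extending each red curve to a pseudoline via generic rays beyond both grounds and applying Theorem~\ref{GoP} a second time to deform the resulting pseudoline arrangement into a wiring diagram. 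The blue arcs are transported by the same homeomorphism, preserving all red--blue intersections. Lemma~\ref{sweep} likely enters during the pigeonholing stage to select compatible orderings of the red and blue endpoints on each new ground, so that the second Goodman--Pollack step can be carried out uniformly. Once these moves are complete, Lemma~\ref{xmono} applies and yields the desired linear-sized subfamilies $\mathcal{R}'\subset\mathcal{R}_0$ and $\mathcal{B}'\subset\mathcal{B}_0$ with uniform crossing or uniform disjointness; translated back through the cascade of homeomorphisms and isotopies, these are the required subsets for the original configuration.
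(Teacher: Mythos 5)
Your high-level strategy (reduce to Lemma~\ref{xmono} by making the grounds disjoint, discarding blue interference, and then straightening the red family via Theorem~\ref{GoP}) is the same as the paper's, but two of your steps have genuine gaps. First, the confinement claim is false: $\gamma_1$ and $\gamma_2$ are bounded pseudo-segments, so after any homeomorphism of the plane they become two crossing \emph{arcs}, not two full coordinate axes. A red curve's interior avoids $\gamma_1\cup\gamma_2$, but it is free to wander around the endpoints of these arcs, so it need not lie in a single closed ``quadrant''; likewise a blue curve can cross each ground once and can enter and leave your region $Q_1$ many times through the part of its boundary beyond the grounds, so $\beta\cap Q_1$ may be highly disconnected and your classification/truncation of the blue curves does not go through. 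This is exactly the difficulty the paper's proof spends most of its effort on: it pigeonholes on which components of $\gamma_i\setminus\gamma_{3-i}$ carry the red endpoints, fixes a cyclic orientation and a ``left-left''-type turning pattern, and then uses Lemma~\ref{sweep} along each ground to extract linear-sized $\mathcal{R}_5,\mathcal{B}_2$ together with subcurves $\gamma_1'',\gamma_2''$ of the grounds that are \emph{disjoint from every chosen blue curve} --- a step your outline does not replace (your invocation of Lemma~\ref{sweep} for ``compatible orderings'' is not what it is needed for).

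Second, the straightening step is underspecified at the crucial point. Extending the red curves ``via generic rays beyond both grounds'' does not produce a pseudoline arrangement: to apply Theorem~\ref{GoP} you need every pair of extended curves to cross \emph{exactly once}, so disjoint red pairs must acquire exactly one new crossing and crossing pairs must acquire none. This is why the paper first reduces to a family with a common orientation in which red--red crossing is equivalent to an order reversal between the two grounds, and then performs the explicit extension through an empty lens along an auxiliary curve $\gamma_3''$ (with the order along $\gamma_3''$ reversed relative to $\gamma_1''$), followed by pulling the endpoints toward $p$ and a sphere projection to put $p$ on the unbounded face. Your observation that, for double-grounded pseudo-segments in a strip, crossing is equivalent to endpoint interleaving is correct, but it only gives that the red--red crossing \emph{graph} matches that of straight chords; it does not by itself yield a homeomorphism of the strip fixing the grounds and straightening the reds, and the pseudoline extension you invoke to produce one is precisely the step that needs the paper's lens construction. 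As written, the proposal assumes away the configuration's main difficulties rather than resolving them.
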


\begin{proof}%[Proof of Theorem \ref{thmdg}]
By passing to linear-sized subsets of $\mathcal{R}$ and $\mathcal{B}$ and subcurves of $\gamma_1$ and $\gamma_2$, we will reduce the problem to the setting of Lemma \ref{xmono}.  Let us assume that $\gamma_1$ and $\gamma_2$ cross at point $p$.  Hence, $(\gamma_1\setminus \gamma_2)\cup (\gamma_2\setminus \gamma_1)$ consists of four connected components.  By the pigeonhole principle, there is a subset $\mathcal{R}_1\subset \mathcal{R}$ of size $n/4$ such that every curve in $\mathcal{R}_1$ has an endpoint on one of the connected components of $\gamma_1\setminus \gamma_2$, and all of the other endpoints lie on one of the connected components of $\gamma_2\setminus \gamma_1$.  Let $\gamma'_i \subset\gamma_i$, for $i  = 1,2$, be these connected components so that they have a common endpoint at $p$ and their interiors are disjoint.

 For each $\alpha \in \mathcal{R}_1$, the sequence of curves $(\gamma'_1,\gamma'_2,\alpha)$ appear either in \emph{clockwise} or \emph{counterclockwise} order along the unique simple closed curve that lies in $\gamma'_1\cup\gamma'_2\cup\alpha$.  Without loss of generality, we can assume that there is a subset $\mathcal{R}_2 \subset \mathcal{R}_1$, where $|\mathcal{R}_2| = \Omega(n)$, such that for every curve  $\alpha \in \mathcal{R}_2$, the sequence $(\gamma'_1,\gamma'_2,\alpha)$ appears in clockwise order, since a symmetric argument would follow otherwise.

We define the \emph{orientation} of each curve $\alpha \in \mathcal{R}_2$ as the sequence of turns, either \emph{left-left}, \emph{left-right}, \emph{right-left}, or \emph{right-right}, made by starting at $p$ and moving along $\gamma'_1$ in the arrangement $\gamma_1'\cup\gamma_2'\cup \alpha$, until we return back to $p$.   More precisely, starting at $p$ we move along $\gamma'_1$ until we reach the endpoint of $\alpha$.  We then turn either \emph{left} or \emph{right} to move along $\alpha$ towards $\gamma'_2$.  Once we've reached $\gamma'_2$, we either turn \emph{left} or \emph{right} in order to move along $\gamma'_2$ and reach $p$ again.  By the pigeonhole principle, there is a subset $\mathcal{R}_3 \subset\mathcal{R}_2$ of size  at least $\Omega(n)$ such that all curves in $\mathcal{R}_3$ have the same orientation.   Without loss of generality, we can assume that the orientation is \emph{left-left}, since a symmetric argument would follow otherwise.

Starting at $p$ and moving along $\gamma_1'$ towards its other endpoint, let us consider the sequence of curves from $\mathcal{R}_3\cup\mathcal{B}$ intersecting $\gamma_1'$.  Then, by Lemma \ref{sweep}, there are subsets $\mathcal{R}_4\subset \mathcal{R}_3$ and $\mathcal{B}_1\subset \mathcal{B}$, where $|\mathcal{R}_4| \geq |\mathcal{R}_3|/2$ and $|\mathcal{B}_1| \geq |\mathcal{B}|/2$, such that either all of the curves in $\mathcal{R}_4$ appear before all of the curves in $\mathcal{B}_1$ that intersect $\gamma_1'$ in this sequence, or all of the curves in $\mathcal{R}_4$ appear after all of the curves in $\mathcal{B}_1$ in this sequence.  Note that $\mathcal{B}_1$ consists of the blue curves in $\mathcal{B}$ that are disjoint to $\gamma_1'$ and at least half of the curves in $\mathcal{B}$ that intersect $\gamma_1'$ found by the application of Lemma \ref{sweep}. Hence, there is a subcurve $\gamma_1''\subset\gamma'_1$ such that $\gamma_1''$ is one of the grounds for $\mathcal{R}_4$, and is disjoint from every curve in $\mathcal{B}_1$.  We apply the same argument to $\mathcal{R}_4\cup \mathcal{B}_1$ and $\gamma_2'$, and obtain subsets $\mathcal{R}_5\subset \mathcal{R}_4$, $\mathcal{B}_2\subset\mathcal{B}_1$, and a subcurve $\gamma_2'' \subset\gamma_2'$, such that $|\mathcal{R}_5|,|\mathcal{B}_2| = \Omega(n)$, and $\mathcal{R}_5$ is double grounded with disjoint grounds $\gamma_1''$ and $\gamma_2''$, and every curve in $\mathcal{B}_2$ is disjoint from $\gamma_1''$ and $\gamma_2''$.

For $i \in \{1,2\}$, let $p_i$ be the endpoint of $\gamma_i''$ that lies closest to $p$ along $\gamma'_i$.  Starting at $p_i$ and moving along $\gamma_i''$, let $\pi_i$ be the sequence of curves in $\mathcal{R}_5$ that appear on $\gamma_i''$.  Since every curve in $\mathcal{R}_5$ has the same left-left orientation, and appears clockwise order with respect to $\gamma_1'$ and $\gamma_2'$, two curves $\alpha,\alpha' \in \mathcal{R}_5$ cross if and only if the order in which they appear in $\pi_1$ and $\pi_2$ changes.  Let $\gamma''_3$ be a curve very close to $\gamma_2''$ such that $\gamma''_3$ has the same endpoints as $\gamma_2''$, and is disjoint from all curves in $\mathcal{R}_5\cup\mathcal{B}_2$.  Hence, 
$\gamma''_2\cup\gamma_3''$ makes an empty lens in the arrangement $\mathcal{R}_5\cup\mathcal{B}_2$.   We slightly extend each curve $\alpha \in \mathcal{R}_5$ through this lens to $\gamma_3''$ so that the resulting curve, $\alpha'$ properly crosses $\gamma''_2$ and has its new endpoint on $\gamma_3''$.  Moreover, the extension will be made in such a way that the sequence $\pi_3$ of curves in $\mathcal{R}_5$ appearing along $\gamma_3''$ starting from $p_2$ will appear in the opposite order of $\pi_1$.  Let 

$$\mathcal{R}_5' = \{\alpha': \alpha \in \mathcal{R}_5\}.$$

\noindent Thus,  every pair of curves in $\mathcal{R}_5'$ will cross exactly once.

For each curve $\alpha' \in \mathcal{R}'_5$, we further extend $\alpha'$ by moving both endpoints towards $p$ along $\gamma_1$ and $\gamma_2$, so that we do not create any additional crossings within $\mathcal{R}'_5$.  Let $\hat{\alpha}$ be the resulting extension, where both endpoints of $\hat{\alpha}$ lie arbitrarily close to $p$.  Set $\hat{\mathcal{R}}_5 = \{\hat{\alpha}: \alpha' \in \mathcal{R}'_5\}.$  See Figure \ref{figext}/ Furthermore, we can assume that $p$ lies in the unbounded face of the arrangement $\hat{\mathcal{R}}_5$, since otherwise we could project the arrangement $\hat{\mathcal{R}}_5$ onto a sphere, and then project it back to the plane so that $p$ lies in the unbounded face, without creating or removing any crossing.  Therefore, $\hat{\mathcal{R}}_5$ can be extended to a family of pseudolines.  By Theorem \ref{GoP}, we can apply a continuous deformation of the plane so that $\hat{\mathcal{R}}_5$ becomes a collection of unbounded $x$-monotone curves.   Hence, after the deformation, the original set $\mathcal{R}_5$ becomes a collection of double grounded $x$-monotone curves, with grounds $\gamma_1'',\gamma_2''$, such that every curve in $\mathcal{B}_2$ is disjoint from the grounds $\gamma_1''$ and $\gamma_2''$, the crossing pattern in the arrangement $\mathcal{R}_5\cup\mathcal{B}_2$ is the same as before.  Moreover, $\gamma_1''$ and $\gamma_2''$ will be disjoint vertical segments. We apply Lemma \ref{xmono} to $\mathcal{R}_5$ and $\mathcal{B}_2$ and obtain subsets $\mathcal{R}_6\subset \mathcal{R}_5$ and $\mathcal{B}_3 \subset \mathcal{B}_2$, each of size $\Omega(n)$, such that either every curve in $\mathcal{R}_6$ crosses every curve in $\mathcal{B}_3$, or every curve in $\mathcal{R}_6$ is disjoint from every curve in $\mathcal{B}_3$. This completes the proof.\end{proof}

\begin{figure}
\centering
\includegraphics[width=4.5cm]{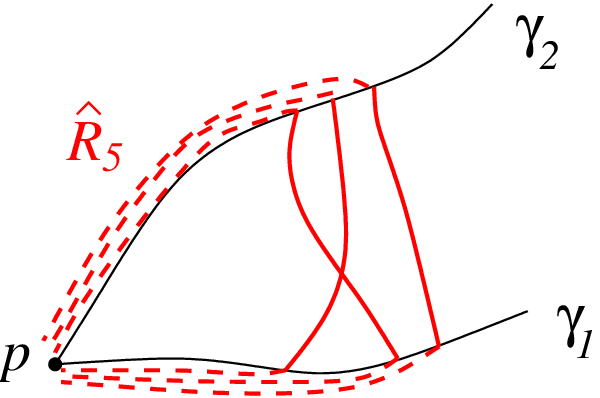}
\caption{The resulting extension $\hat{\mathcal{R}}_5$.}\label{figext}
\end{figure}

By combining Theorem \ref{thmdg} with Lemma \ref{sz}, as shown in Section \ref{her}, we obtain the following density theorems.

\begin{theorem}\label{dgold}
    There is an absolute constant $c' > 0$ such that the following holds.  Let $\mathcal{R}$ be a collection of $n$ red double grounded curves with grounds $\gamma_1$ and $\gamma_2$, such that $\gamma_1$ and $\gamma_2$ cross.  Let $\mathcal{B}$ be a collection of $n$ blue curves such that $\mathcal{R}\cup\mathcal{B}\cup \{\gamma_1, \gamma_2\}$ is a collection of pseudo-segments.
    
    If there are at least $\varepsilon n^2$ crossing pairs in $\mathcal{R}\times \mathcal{B}$, then there are subsets $\mathcal{R}'\subset \mathcal{R},\mathcal{B}' \subset \mathcal{B}$, where $|\mathcal{R}'|,|\mathcal{B}'| \geq \varepsilon^{c'}n$, such that every curve in $\mathcal{R}'$ crosses every curves in $\mathcal{B}'$. 
\end{theorem}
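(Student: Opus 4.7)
The plan is to mirror the deduction of Lemma \ref{mightytodensity} from the mighty Erd\H{o}s-Hajnal property, with Theorem \ref{thmdg} playing the role of the bipartite Ramsey-type input. Theorem \ref{thmdg} already supplies the dichotomy ``complete crossing versus complete disjointness'' on linear-sized subfamilies, so the only remaining task is to exploit the crossing-density hypothesis to rule out the disjointness branch.

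Let $c_1 \in (0, 1/2]$ denote a lower bound on the constant implicit in the $\Omega(n)$ bound of Theorem \ref{thmdg}. First I would apply Lemma \ref{sz} to the bipartite graph whose parts are $\mathcal{R}$ and $\mathcal{B}$ and whose edges are the crossing pairs. Since this graph has at least $\varepsilon n^2$ edges, the lemma applied with parameter $c = c_1$ produces $\mathcal{R}_0 \subset \mathcal{R}$ and $\mathcal{B}_0 \subset \mathcal{B}$, each of size $\varepsilon^{1/c_1^2} n$, with the robustness property that every $\mathcal{R}'' \subset \mathcal{R}_0$ and $\mathcal{B}'' \subset \mathcal{B}_0$ with $|\mathcal{R}''|, |\mathcal{B}''| \geq c_1 |\mathcal{R}_0|$ contain at least one crossing pair between them.

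Next I would invoke Theorem \ref{thmdg} on $(\mathcal{R}_0, \mathcal{B}_0)$; note that $\mathcal{R}_0$ remains double grounded by the same crossing curves $\gamma_1, \gamma_2$, and the pseudo-segment property is preserved under taking subsets, so all hypotheses of Theorem \ref{thmdg} are met. This yields $\mathcal{R}' \subset \mathcal{R}_0$ and $\mathcal{B}' \subset \mathcal{B}_0$ with $|\mathcal{R}'|, |\mathcal{B}'| \geq c_1 |\mathcal{R}_0| = c_1 \varepsilon^{1/c_1^2} n$ that are homogeneous in one of the two senses. The all-disjoint alternative would directly contradict the robustness property from Lemma \ref{sz}, hence every curve in $\mathcal{R}'$ must cross every curve in $\mathcal{B}'$. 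Choosing any $c' > 1/c_1^2$ absorbs the factor $c_1$ for sufficiently small $\varepsilon$, while the range where $\varepsilon$ is bounded below by an absolute constant can be handled by further enlarging $c'$, giving the claimed bound $|\mathcal{R}'|, |\mathcal{B}'| \geq \varepsilon^{c'} n$. The geometric content is entirely packaged in Theorem \ref{thmdg}; the only step requiring any care is verifying that the conclusion of Lemma \ref{sz} is strong enough to exclude the wrong branch of the dichotomy, which it is precisely by condition (3) of that lemma.
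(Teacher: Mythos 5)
Your proposal is correct and is essentially the paper's own argument: the paper derives Theorem \ref{dgold} by combining Theorem \ref{thmdg} with the Koml\'os--S\'os lemma (Lemma \ref{sz}) exactly as in the proof of Lemma \ref{mightytodensity}, using condition (3) of Lemma \ref{sz} to rule out the all-disjoint branch. The only caveat, the bookkeeping of the constant when $\varepsilon$ is not small, is a slack the paper's statement itself glosses over, so no genuine gap remains.
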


\begin{theorem}\label{dgappdis}
   There is an absolute constant $c' > 0$ such that the following holds.  Let $\mathcal{R}$ be a collection of $n$ red double grounded curves with grounds $\gamma_1$ and $\gamma_2$, such that $\gamma_1$ and $\gamma_2$ cross.  Let $\mathcal{B}$ be a collection of $n$ blue curves such that $\mathcal{R}\cup\mathcal{B}$ is a collection of pseudo-segments.

 If there are at least $\varepsilon n^2$ disjoint pairs in $\mathcal{R}\times \mathcal{B}$, then there are subsets $\mathcal{R}'\subset \mathcal{R},\mathcal{B}' \subset \mathcal{B}$, where $|\mathcal{R}'|,|\mathcal{B}'| \geq \varepsilon^{c'}n$, such that every curve in $\mathcal{R}'$ is disjoint from every curves in $\mathcal{B}'$. 
\end{theorem}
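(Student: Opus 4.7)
The plan is to derive Theorem \ref{dgappdis} from Theorem \ref{thmdg} by running the same density-amplification argument that produced Lemma \ref{mightytodensity} from the mighty Erd\H{o}s--Hajnal property, but applied to the \emph{disjointness} bipartite graph (the bipartite complement of the intersection graph) rather than the intersection graph itself. Theorem \ref{thmdg} serves as the ``mighty'' input, and Lemma \ref{sz} (Koml\'os--S\'os) promotes the global density of disjoint pairs into the guarantee that every sufficiently large sub-pair contains at least one disjoint pair.

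Let $c \in (0,1/2]$ be the absolute constant supplied by Theorem \ref{thmdg}. First I form the bipartite graph $H=(\mathcal{R},\mathcal{B},E)$ whose edges are precisely the disjoint pairs in $\mathcal{R}\times\mathcal{B}$, so $|E|\geq \varepsilon n^2$. Applying Lemma \ref{sz} to $H$ with parameter $c$ yields subsets $\mathcal{R}_0\subset\mathcal{R}$ and $\mathcal{B}_0\subset\mathcal{B}$ with $|\mathcal{R}_0|=|\mathcal{B}_0|=\varepsilon^{1/c^2}n$ such that for all $\mathcal{R}^*\subset\mathcal{R}_0$ and $\mathcal{B}^*\subset\mathcal{B}_0$ of size at least $c|\mathcal{R}_0|$, there is at least one disjoint pair between $\mathcal{R}^*$ and $\mathcal{B}^*$.

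Next I apply Theorem \ref{thmdg} to the pair $(\mathcal{R}_0,\mathcal{B}_0)$: the subfamily $\mathcal{R}_0$ inherits the double-grounded structure with the same crossing grounds $\gamma_1,\gamma_2$, and $\mathcal{R}_0\cup\mathcal{B}_0$ remains a pseudo-segment family, so the hypotheses of Theorem \ref{thmdg} are met. I obtain homogeneous subsets $\mathcal{R}'\subset\mathcal{R}_0$ and $\mathcal{B}'\subset\mathcal{B}_0$ with $|\mathcal{R}'|,|\mathcal{B}'|\geq c|\mathcal{R}_0|$. The all-crossing alternative is ruled out by the guarantee from Lemma \ref{sz} that at least one disjoint pair exists between $\mathcal{R}'$ and $\mathcal{B}'$, so we must be in the all-disjoint case. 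Combining sizes gives $|\mathcal{R}'|,|\mathcal{B}'|\geq c\cdot\varepsilon^{1/c^2}n$, which absorbs into $\varepsilon^{c'}n$ for an absolute constant $c'$ (handling the range where $\varepsilon$ is bounded away from $0$ separately by a direct application of Theorem \ref{thmdg}, where a linear-sized homogeneous pair is forced to be all-disjoint by a simple averaging argument on the many disjoint pairs).

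The main point requiring care is a pseudo-segment bookkeeping issue: Theorem \ref{thmdg} as stated assumes $\mathcal{R}\cup\mathcal{B}\cup\{\gamma_1,\gamma_2\}$ is a pseudo-segment family, whereas Theorem \ref{dgappdis} only assumes this of $\mathcal{R}\cup\mathcal{B}$. Inspection of the proof of Theorem \ref{thmdg} shows that the grounds are ultimately replaced by short subcurves $\gamma_1'',\gamma_2''$ which are chosen to be disjoint from all surviving blue curves before the reduction to Lemma \ref{xmono}; so arbitrary behavior of the original $\gamma_i$ with respect to $\mathcal{B}$ causes no harm, and this is the step I would verify most carefully before committing to the above derivation.
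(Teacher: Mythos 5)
Your derivation is exactly the paper's: Theorem \ref{dgappdis} is stated there as following by combining Lemma \ref{sz} (applied to the bipartite disjointness graph) with Theorem \ref{thmdg}, precisely along the lines of the proof of Lemma \ref{mightytodensity}, and that core part of your argument (Koml\'os--S\'os to get a pair in which every $c$-fraction sub-pair contains a disjoint pair, then Theorem \ref{thmdg}, then the guaranteed disjoint pair rules out the all-crossing outcome) is correct.

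The one place where you go beyond the paper is the hypothesis bookkeeping, and there your proposed resolution does not hold up. The proof of Theorem \ref{thmdg} genuinely uses that the grounds belong to the pseudo-segment family: the very first step needs $\gamma_1$ and $\gamma_2$ to cross exactly once (to get four components and a constant-fraction pigeonhole), and the sweep along $\gamma_1'$ via Lemma \ref{sweep} needs each blue curve to meet $\gamma_1'$ (and later $\gamma_2'$) at most once in order to produce a subcurve $\gamma_1''$ that carries a constant fraction of the red endpoints while avoiding a constant fraction of the blue curves. If blue curves may cross a ground arbitrarily often, with crossings interleaved among the red endpoints, no such subcurve need exist, so the grounds are not ``replaced harmlessly'' --- the at-most-one-crossing property is what makes the replacement possible. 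The discrepancy between the statements of Theorems \ref{dgold} and \ref{dgappdis} is therefore best read as Theorem \ref{dgappdis} implicitly carrying the same hypothesis that $\mathcal{R}\cup\mathcal{B}\cup\{\gamma_1,\gamma_2\}$ is a family of pseudo-segments, rather than as slack in the proof of Theorem \ref{thmdg}. Separately, your parenthetical for $\varepsilon$ bounded away from $0$ is not right as stated: a linear-sized homogeneous pair produced by a direct application of Theorem \ref{thmdg} could perfectly well be all-crossing even when most pairs in $\mathcal{R}\times\mathcal{B}$ are disjoint, so no ``averaging'' forces the all-disjoint outcome; this edge case should be handled by the choice of $c'$ (the same convention the paper uses in Lemma \ref{mightytodensity}), not by a separate argument of that kind.
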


\noindent We will apply Theorems \ref{thmdg}  and \ref{dgold}  in the next section.

  \section{Proof of Theorem \ref{main} -- for $\varepsilon$-homogeneous families}\label{highvslow}

The aim of this section is to prove Theorem~\ref{main}, the main result of this paper, in the special case where the edge density of the intersection graph of the red curves is nearly 0 or nearly 1, and the same is true for the intersection graph of the blue curves.  This will easily imply Theorem~\ref{main} in its full generality, as shown in the next section.

 \subsection{Low versus low density}

  By Corollary \ref{lemhom}, it is sufficient to establish Theorem~\ref{main} in the special case where the intersection graphs $G(\mathcal{R})$ and $G(\mathcal{B})$ are both $\varepsilon$-homogeneous.  Below, we first consider the cases when the edge densities of both $G(\mathcal{R}) nd G(\mathcal{B})$ are smaller than $\varepsilon.$  Recall that a \emph{separator} for a graph $G = (V, E)$ is a subset $V_0 \subset V$ such that there
is a partition $V = V_1\cup V_1 \cup V_2$ with $|V_1|, |V_2| \leq \frac{2}{3}|V|$, and no vertex in $V_1$ is
adjacent to any vertex in $V_2$.   The celebrated Lipton-Tarjan separator theorem states
that every planar graph with $n$ vertices has a separator of size $O(\sqrt{n})$.  We will need the following result due to the first two authors.  For a strengthening, see \cite{lee}.

\begin{lemma}[\cite{fp}]\label{foxpach}

Let $\mathcal{C}$ be a collection of curves in the plane with a total of $m$ crossings. 
 Then the intersection graph $G(\mathcal{C})$ has a separator of size $O(\sqrt{m})$.
    
\end{lemma}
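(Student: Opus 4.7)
The plan is to construct an auxiliary planar graph that encodes the crossing pattern of $\mathcal{C}$, apply a weighted form of the Lipton--Tarjan planar separator theorem to it, and then lift the resulting vertex separator back to a separator of $G(\mathcal{C})$.

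First, I would form a planar (multi)graph $H$ whose vertex set is the set of crossing points of $\mathcal{C}$, so $|V(H)|=m$. For each curve $c\in\mathcal{C}$ carrying $d_c\ge 2$ crossings, I would order those crossings as they appear along $c$ and insert an edge between each consecutive pair, drawn along the arc of $c$ between them. Since all edges are drawn along arcs of the original curves, the resulting drawing is planar, and $|E(H)|\le\sum_c d_c = 2m$. To make balance in $H$ translate into balance in $G(\mathcal{C})$, I would weight each crossing $p=c\cap c'$ by $1/d_c+1/d_{c'}$. With this choice, each curve with at least one crossing contributes total weight exactly $1$, so the total weight of $V(H)$ is at most $|\mathcal{C}|$.

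Next, I would invoke the weighted Lipton--Tarjan separator theorem on $H$ to obtain $S_0\subset V(H)$ with $|S_0|=O(\sqrt{m})$ and a partition $V(H)\setminus S_0 = A_0\cup B_0$ such that no edge of $H$ runs between $A_0$ and $B_0$ and each of $A_0,B_0$ has weight at most $\frac{2}{3}$ of the total. Let $S\subset\mathcal{C}$ be the set of curves having at least one crossing in $S_0$, let $A$ consist of the curves whose crossings all lie in $A_0$, let $B$ consist of those whose crossings all lie in $B_0$, and distribute the curves with no crossings (which are isolated in $G(\mathcal{C})$) arbitrarily to keep $|A|$ and $|B|$ comparable. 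The proof then rests on three observations: (i) if some curve $c$ had crossings in both $A_0$ and $B_0$ but none in $S_0$, then two consecutive crossings along $c$ would supply an edge of $H$ between $A_0$ and $B_0$, contradicting the separator property, so every non-isolated curve outside $A\cup B$ must lie in $S$; (ii) each crossing in $S_0$ is shared by exactly two curves, giving $|S|\le 2|S_0|=O(\sqrt{m})$; and (iii) any curve in $A$ (respectively $B$) deposits its entire weight $1$ into $A_0$ (respectively $B_0$), so $|A|,|B|\le\frac{2}{3}|\mathcal{C}|$.

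The main obstacle, and the reason for using the weighted rather than the ordinary Lipton--Tarjan theorem, is that a single curve with many crossings contributes many vertices to $H$ while contributing only one to $G(\mathcal{C})$; a naive unweighted argument would balance crossings instead of curves and the lifted partition could be wildly unbalanced. The weight $1/d_c+1/d_{c'}$ is engineered precisely so that a curve entirely on one side of the separator in $H$ contributes exactly $1$ to that side's total weight. Once this bookkeeping is in place, the three observations above combine to show that $S$ is the desired separator of $G(\mathcal{C})$ of size $O(\sqrt{m})$.
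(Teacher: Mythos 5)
Your argument is correct and is essentially the original proof of this lemma in the cited reference \cite{fp} (the present paper only quotes the result without proof): planarize the arrangement at the crossing points, weight each crossing $p=c\cap c'$ by $1/d_c+1/d_{c'}$, apply the weighted Lipton--Tarjan separator theorem, and lift back, placing in the separator the at most $2|S_0|$ curves through separator crossings. The only point worth flagging is the implicit general-position assumption (no three curves through a point, so that $H$ is plane and each crossing lies on exactly two curves), which is harmless since a small perturbation preserves the intersection graph and the number of crossings, and which holds automatically in the pseudo-segment setting where the lemma is applied.
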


 We now prove the following.

\begin{lemma}\label{lemsep}
There is an absolute constant $\varepsilon_0 > 0$ such that the following holds.
  Let $\mathcal{R}$ be a set of $n$ red curves in the plane and let $\mathcal{B}$ be a set of $n$ blue curves such that $\mathcal{R}\cup \mathcal{B}$ is a collection of pseudo-segments.  
  
  If the edge densities of the intersection graphs $G(\mathcal{R})$ and $G(\mathcal{B})$ are both less than $\varepsilon_0$, and there are at most $\varepsilon_0 n^2$ crossing pairs in $\mathcal{R}\times \mathcal{B}$, then there are subsets $\mathcal{R}'\subset\mathcal{R}$ and $\mathcal{B}' \subset \mathcal{B}$, each of size $\Omega(n)$, such that every red curve in $\mathcal{R}'$ is disjoint from every blue curve in $\mathcal{B}'$.

\end{lemma}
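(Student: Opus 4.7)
The plan is to exploit sparsity of the full intersection graph via the Fox--Pach separator theorem (Lemma~\ref{foxpach}). Since edges of $G(\mathcal{R})$ and $G(\mathcal{B})$ each number at most $\varepsilon_0\binom{n}{2}$ and there are at most $\varepsilon_0 n^2$ crossings between $\mathcal{R}$ and $\mathcal{B}$, the total number of crossings in the arrangement $\mathcal{R}\cup\mathcal{B}$ is at most $2\varepsilon_0 n^2$. Consequently, the intersection graph $G(\mathcal{R}\cup\mathcal{B})$ on $2n$ vertices has a separator $S$ of size $|S|\le C\sqrt{\varepsilon_0}\,n$ for some absolute constant $C>0$, yielding a partition $\mathcal{R}\cup\mathcal{B}=V_1\cup S\cup V_2$ with $|V_1|,|V_2|\le \tfrac{4n}{3}$ and no edge of $G(\mathcal{R}\cup\mathcal{B})$ between $V_1$ and $V_2$.

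Set $r_i=|V_i\cap\mathcal{R}|$ and $b_i=|V_i\cap\mathcal{B}|$, so $r_1+r_2\ge n-|S|$ and $b_1+b_2\ge n-|S|$. The key observation is that if $r_i\ge n/10$ and $b_j\ge n/10$ for some $\{i,j\}=\{1,2\}$, then taking $\mathcal{R}'=V_i\cap\mathcal{R}$ and $\mathcal{B}'=V_j\cap\mathcal{B}$ gives two subsets of size $\Omega(n)$ with no edge of $G(\mathcal{R}\cup\mathcal{B})$ between them; since the members of $\mathcal{R}\cup\mathcal{B}$ are pseudo-segments, this means every curve of $\mathcal{R}'$ is disjoint from every curve of $\mathcal{B}'$, as required.

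It then remains to show that, for $\varepsilon_0$ sufficiently small, we cannot fall outside of this ``good'' configuration. Enumerating the bad cases: if $r_1,r_2<n/10$ (resp.\ $b_1,b_2<n/10$), then $n-|S|\le n/5$, which contradicts $|S|\le C\sqrt{\varepsilon_0}\,n$ once $\varepsilon_0<(4/(5C))^2$. If instead $r_1,b_1<n/10$, then $|V_1|<n/5$, whence $|V_2|\ge 2n-|S|-n/5>\tfrac{9n}{5}-C\sqrt{\varepsilon_0}\,n$, contradicting $|V_2|\le \tfrac{4n}{3}$ once $\varepsilon_0<(7/(15C))^2$; the symmetric case $r_2,b_2<n/10$ is identical. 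Choosing $\varepsilon_0$ smaller than both thresholds eliminates every bad case and completes the proof.

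I expect no real obstacle beyond this numerical bookkeeping on the constant $\varepsilon_0$. The geometry of pseudo-segments is used only through Lemma~\ref{foxpach}: what drives the argument is that a graph admitting an $O(\sqrt{m})$ separator in the Lipton--Tarjan sense, when the vertex set is split $n$--$n$ into two colors, must place linearly many vertices of each color on opposite sides once its edge count is much smaller than $n^2$.
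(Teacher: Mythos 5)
Your proposal is correct and follows essentially the same route as the paper: apply the Fox--Pach separator theorem (Lemma~\ref{foxpach}) to $\mathcal{R}\cup\mathcal{B}$, whose total number of crossings is $O(\varepsilon_0 n^2)$, obtain a separator of size $O(\sqrt{\varepsilon_0}\,n)$, and then argue by counting that one side of the partition contains $\Omega(n)$ red curves while the other contains $\Omega(n)$ blue curves. The only difference is bookkeeping: the paper fixes the side with at least half red curves and counts blues on the other side directly, whereas you run an explicit four-case analysis; both yield the same conclusion with the same mechanism.
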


\begin{proof}
Let $\varepsilon_0$ be a small constant that will be determined later. Set $\mathcal{C} = \mathcal{R}\cup\mathcal{B}$.  We apply Lemma~\ref{foxpach} to $\mathcal{C}$ and obtain a partition $\mathcal{C} = \mathcal{C}_0\cup \mathcal{C}_1\cup \mathcal{C}_2$, such that $|\mathcal{C}_1|,|\mathcal{C}_2|\leq (2/3)(2n)$, and every curve in $\mathcal{C}_1$ is disjoint from every curve in $\mathcal{C}_2$.  Let $c$ be the constant from Lemma \ref{foxpach}.  By setting $\varepsilon_0 < \frac{1}{(2c)^216}$, we have

$$|\mathcal{C}_0| \leq c\sqrt{\varepsilon_0(2n)^2} \leq 2c\sqrt{\varepsilon_0}n \leq n/4.$$

Without loss of generality, we can assume that at least half of the curves in $\mathcal{C}_1$ are red, since a symmetric argument would follow otherwise.  
Since $|\mathcal{C}_1| \geq (2n)/3 - |\mathcal{C}_0|$, there are at least $$\frac{2n/3 - n/4}{2} = 5n/24$$ red curves in $\mathcal{C}_1$.  Moreover, there are at most $2n/3$ blue curves in $\mathcal{C}_1$, which implies that there are at least

$$n - \frac{2n}{3} - \frac{n}{4} \geq \frac{n}{12},$$

\noindent blue curves in $\mathcal{C}_2$.  By setting $\mathcal{R}'$ to be the red curves in $\mathcal{C}_1$ and $\mathcal{B}'$ to be the blue curves in $\mathcal{C}_2$, we obtain the desired subsets. \end{proof}

\begin{theorem}\label{0a0}
There is an absolute constant $\varepsilon_1>0$ such that the following holds.  Let $\mathcal{R}$ be a set of $n$ red curves and $\mathcal{B}$ be a set of $n$ blue curves in the plane such that $\mathcal{R}\cup \mathcal{B}$ is a collection of pseudo-segments.  

If the edge densities of the intersection graphs $G(\mathcal{R})$ and $G(\mathcal{B})$ are both less than $\varepsilon_1$, then there are subsets $\mathcal{R}'\subset\mathcal{R}$ and $\mathcal{B}' \subset \mathcal{B}$, each of size $\Omega(n)$, such that every red curve in $\mathcal{R}'$ crosses every blue curve in $\mathcal{B}'$, or  every red curve in $\mathcal{R}'$ is disjoint from every blue curve in $\mathcal{B}'$.  
\end{theorem}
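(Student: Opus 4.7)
The argument splits naturally into two cases based on the number of crossings between $\mathcal{R}$ and $\mathcal{B}$. Let $\varepsilon_0$ be the constant from Lemma \ref{lemsep} and choose $\varepsilon_1 > 0$ sufficiently small, in particular $\varepsilon_1 \leq \varepsilon_0$. If there are at most $\varepsilon_0 n^2$ bipartite crossings, the hypotheses of Lemma \ref{lemsep} are satisfied and it directly produces linear-sized $\mathcal{R}', \mathcal{B}'$ with every pair disjoint. The remainder of this plan addresses the complementary case where there are more than $\varepsilon_0 n^2$ bipartite crossings, in which we must produce the complete-crossing bipartite conclusion; here the plan is to reduce to Theorem \ref{dgold}.

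First, by iterating the Fox--Pach separator theorem (Lemma \ref{foxpach}) on $G(\mathcal{R})$ and $G(\mathcal{B})$, each of which has at most $\varepsilon_1 \binom{n}{2}$ edges, I extract pairwise-disjoint subfamilies $\mathcal{R}^* \subseteq \mathcal{R}$ and $\mathcal{B}^* \subseteq \mathcal{B}$ of size $(1 - O(\sqrt{\varepsilon_1}))n$ each; choosing $\varepsilon_1$ small enough preserves the bipartite crossing density to within a constant factor. Next, a Cauchy--Schwarz-type calculation on the bipartite degree sequence yields a pair $\rho_0, \rho_1 \in \mathcal{R}^*$ whose common blue neighborhood
\[
\mathcal{B}_{01} = \{\beta \in \mathcal{B}^* : \beta \text{ crosses both } \rho_0 \text{ and } \rho_1\}
\]
has size $\Omega(n)$. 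Since $\rho_0$ and $\rho_1$ are disjoint (both in the pairwise-disjoint $\mathcal{R}^*$), each $\beta \in \mathcal{B}_{01}$ is split by its two interior crossings with $\rho_0, \rho_1$ into three sub-arcs; let $\beta^{\mathrm{mid}}$ denote the middle one, whose two endpoints lie on $\rho_0$ and $\rho_1$ respectively. The family $\{\beta^{\mathrm{mid}} : \beta \in \mathcal{B}_{01}\}$ is then double-grounded on $\rho_0 \cup \rho_1$.

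To convert $\rho_0, \rho_1$ into the crossing grounds required by Theorem \ref{dgold}, I would extend them, beyond the bounding box of all curves in $\mathcal{R}^* \cup \mathcal{B}^*$, into curves $\gamma_1 \supset \rho_0$ and $\gamma_2 \supset \rho_1$ that cross exactly once; since the extensions are placed in an empty region of the plane, the pseudo-segment property of $\{\beta^{\mathrm{mid}}\} \cup (\mathcal{R}^* \setminus \{\rho_0, \rho_1\}) \cup \{\gamma_1, \gamma_2\}$ is preserved. Now apply Theorem \ref{dgold} with the double-grounded ``red'' family $\{\beta^{\mathrm{mid}}\}$ and ``blue'' family $\mathcal{R}^* \setminus \{\rho_0, \rho_1\}$. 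A pigeonhole argument, together with the averaging step that selected $\rho_0, \rho_1$, ensures that a constant fraction of the original red–blue crossings contributed by $\mathcal{B}_{01}$ land on the middle sub-arcs, so the bipartite crossing density required by Theorem \ref{dgold} is $\Omega(1)$. The resulting pairwise-crossing subsets of $\{\beta^{\mathrm{mid}}\}$ and of $\mathcal{R}^* \setminus \{\rho_0, \rho_1\}$ lift back directly to pairwise-crossing subsets $\mathcal{B}' \subseteq \mathcal{B}_{01}$ and $\mathcal{R}' \subseteq \mathcal{R}^*$ of linear size.

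The main technical obstacle will be the topological bookkeeping around the endpoint extension: I must verify that the extensions of $\rho_0, \rho_1$ into crossing grounds can indeed be performed in a region disjoint from every curve in $\mathcal{B}_{01} \cup \mathcal{R}^*$ (so as not to introduce spurious intersections with any $\beta^{\mathrm{mid}}$ or any other red), while also making $\gamma_1, \gamma_2$ cross exactly once. A secondary concern is the sub-arc accounting---confirming rigorously that enough red--blue crossings survive the restriction from $\beta$ to $\beta^{\mathrm{mid}}$ after appropriately adjusting $\rho_0, \rho_1$ if needed.
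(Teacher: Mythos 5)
Your treatment of the few-crossings case is exactly the paper's: choose $\varepsilon_1\leq\varepsilon_0$ and invoke Lemma \ref{lemsep}. The many-crossings case, however, has genuine gaps, the first of which is fatal as stated. Iterating the separator theorem (Lemma \ref{foxpach}) on a pseudo-segment family whose intersection graph has at most $\varepsilon_1\binom{n}{2}$ edges does \emph{not} yield a pairwise-disjoint subfamily of size $(1-O(\sqrt{\varepsilon_1}))n$. Take $\mathcal{R}$ to be a disjoint union of $1/(2\varepsilon_1)$ far-apart ``cliques,'' each consisting of $2\varepsilon_1 n$ pairwise-crossing segments: the number of crossing pairs is about $\varepsilon_1 n^2$, yet the largest pairwise-disjoint subfamily has only $1/(2\varepsilon_1)$ members, a constant. (The iteration also fails quantitatively: once the recursion reaches small parts, the $O(\sqrt{m_i})$ separator is comparable to the part size, so the total loss is not $O(\sqrt{\varepsilon_1}\,n)$.) One can rescue the weaker fact you actually need --- two \emph{disjoint} red curves $\rho_0,\rho_1$ whose common blue neighborhood has size $\Omega(n)$ --- by Cauchy--Schwarz on blue degrees plus the fact that fewer than $\varepsilon_1 n^2$ red pairs cross, but the rest of your reduction still does not close. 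First, there is no reason a constant fraction of the red--blue crossings involving $\mathcal{B}_{01}$ land on the middle sub-arcs $\beta^{\mathrm{mid}}$: every $\beta\in\mathcal{B}_{01}$ could cross $\rho_0$ and $\rho_1$ near one of its ends, with a middle arc crossing nothing, in which case the density hypothesis of Theorem \ref{dgold} fails and your plan has no fallback. Second, the ground-extension step you flag as ``bookkeeping'' is a real obstruction: the endpoints of $\rho_0,\rho_1$ may be separated from the outer face by other curves, so any extension making the grounds cross necessarily creates new intersections, and an extension crossing a curve that already meets $\rho_0$ violates the pseudo-segment hypothesis of Theorem \ref{dgold}.

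The paper's route for the many-crossings case is much shorter and avoids all of this: apply Theorem \ref{old} (the density theorem for crossing pseudo-segments) to the union $\mathcal{R}\cup\mathcal{B}$ with density $\varepsilon_0$, obtaining $\mathcal{C}_1,\mathcal{C}_2$ of size $\varepsilon_0^{c'}n$ with every curve of $\mathcal{C}_1$ crossing every curve of $\mathcal{C}_2$; then a color-counting argument finishes, since if at least half of $\mathcal{C}_1$ is red and $\varepsilon_1=\varepsilon_0^{2c'}/4$, the low density of $G(\mathcal{R})$ forces at least half of $\mathcal{C}_2$ to be blue. In other words, the complete-bipartite crossing pattern is extracted monochromatically from the mixed family and then recolored by counting, with no need for the double-grounded machinery of Theorem \ref{dgold} at this stage. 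I would recommend replacing your Case 2 with this argument.
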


\begin{proof}
Let $\varepsilon_1$ be a small constant that will be determined later. Suppose there are at most $\varepsilon_0 n^2$ crossing pairs in $\mathcal{R}\times \mathcal{B}$, where $\varepsilon_0$ is defined in Lemma \ref{lemsep}. Since the edge density of the intersection graphs $G(\mathcal{R})$ and $G(\mathcal{B})$ are both less than $\varepsilon_1$, by setting $\varepsilon_1 < \varepsilon_0$, Lemma \ref{lemsep} implies that there are subsets $\mathcal{R}'\subset\mathcal{R}$ and $\mathcal{B}' \subset \mathcal{B}$, each of size $\Omega(n)$, such that every red curve in $\mathcal{R}'$ is disjoint from every blue curve in $\mathcal{B}'$ and we are done.  
    
    If there are at at least $\varepsilon_0 n^2$ crossing pairs in $\mathcal{R}\times \mathcal{B}$, then, by Theorem \ref{old}, there are subsets $\mathcal{C}_1,\mathcal{C}_2 \subset \mathcal{R}\cup\mathcal{B}$, each of size $\varepsilon_0^{c'}n$, where $c'>1$ is an absolute constant from Theorem \ref{old}, such that each curve in $\mathcal{C}_1$ crosses each curve in $\mathcal{C}_2$.  Without loss of generality, we can assume that at least $\varepsilon_0^{c'} n/2$ curves in $\mathcal{C}_1$ are red.  Since $G(\mathcal{R})$ has edge density at most $\varepsilon_1$, by setting $\varepsilon_1 = \varepsilon_0^{2c'}/4 <  \varepsilon_0$, at most $\varepsilon_0^{c'} n/2$ curves in $\mathcal{C}_2$ are red.  Hence, at least $\varepsilon_0^{c'} n/2$ curves in $\mathcal{C}_2$ are blue and we are done.\end{proof}

\subsection{High versus low edge density}

In this subsection, we consider the case when the intersection graph $G(\mathcal{R})$ has edge density at least $1-\varepsilon$, and $G(\mathcal{B})$ has edge density less than $\varepsilon$. Since the edge density in the intersection graph $G(\mathcal{R})$ is at least $1-\varepsilon$, we can further reduce to the case when there is a red curve $\gamma_1$ that crosses every member in $\mathcal{R}$ exactly once.

 \begin{lemma}\label{keylem}

For each integer $t \geq 1$, there is a constant $\varepsilon'_t > 0$ such that the following holds.  Let $\mathcal{R}$ be a set of $n$ red curves in the plane, all crossed by a curve $\gamma_1$ exactly once, and $\mathcal{B}$ be a set of $n$ blue curves in the plane such that $\mathcal{R}\cup \mathcal{B}\cup \{\gamma_1\}$ is a collection of pseudo-segments.  Suppose that the intersection graph $G(\mathcal{B})$ has edge density less than $\varepsilon'_t$, and $G(\mathcal{R})$ has edge density at least $1-\varepsilon'_t$. 

Then there are subsets $\hat{\mathcal{R}}\subset\mathcal{R}$, $\hat{\mathcal{B}}\subset \mathcal{B}$, each of size $\Omega_{\varepsilon'_t}(n)$, such that either every red curve in $\hat{\mathcal{R}}$ crosses every blue curve in $\hat{\mathcal{B}}$, or every red curve in $\hat{\mathcal{R}}$ is disjoint from every blue curve in $\hat{\mathcal{B}}$, or each curve $\alpha \in \hat{\mathcal{R}}$ has a partition into two connected parts $\alpha = \hat{\alpha}_{u}\cup \hat{\alpha}_{\ell}$, such that for 

 $$\hat{\mathcal{U}} =\{\hat{\alpha}_{u}: \alpha \in \hat{\mathcal{R}}, \alpha = \hat{\alpha}_{u}\cup \hat{\alpha}_{\ell}\}\hspace{.5cm}\textnormal{and}\hspace{.5cm}\hat{\mathcal{L}}=\{\hat{\alpha}_{\ell}: \alpha  \in \hat{\mathcal{R}}, \alpha = \hat{\alpha}_{u}\cup \hat{\alpha}_{\ell}\},$$

 \noindent every curve in $\hat{\mathcal{L}}$ is disjoint to every curve in $\hat{\mathcal{B}}$, and the edge density of $G(\hat{\mathcal{U}})$ is less that $2^{-t}$.

 \end{lemma}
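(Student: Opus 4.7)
The plan is to prove this by induction on $t$. The base case $t = 1$ requires producing case 3 with upper-part intersection density below $1/2$ (unless cases 1 or 2 hold). Since $\gamma_1$ crosses each red curve exactly once, the natural partition of each $\alpha \in \mathcal{R}$ at $\alpha \cap \gamma_1$ gives two connected arcs, one on each side of $\gamma_1$, and every red-red crossing lies entirely on one side of $\gamma_1$. I would pigeonhole the blue curves relative to $\gamma_1$: a constant fraction of them lie entirely above, entirely below, or cross $\gamma_1$. In the entirely-above sub-case, take $\hat{\mathcal{B}}$ to be those blues and $\hat{\mathcal{R}} = \mathcal{R}$ with the $\gamma_1$-partition, so the lower (below-$\gamma_1$) parts are automatically disjoint from $\hat{\mathcal{B}}$; if $G(\mathcal{U})$ already has density less than $1/2$ we are in case 3, and otherwise I would select a red curve $\alpha_0$ whose upper part crosses at least half the others and use $\gamma_1$ together with $\alpha_0$ as a double-grounding to apply Theorem \ref{thmdg} to the reds versus $\hat{\mathcal{B}}$, extracting case 1 or case 2. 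The entirely-below case is symmetric. When many blues cross $\gamma_1$, I would localize to an arc of $\gamma_1$ near a dense cluster of crossings and again invoke Theorem \ref{thmdg} with a suitable second ground.

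For the inductive step, assume the lemma holds for $t - 1$ and apply it with a small enough $\varepsilon'_{t-1}$. If the outcome is case 1 or 2, we are done. Otherwise, we obtain $\hat{\mathcal{R}}^{(t-1)}$ and $\hat{\mathcal{B}}^{(t-1)}$ of linear size with a partition of each red into upper and lower parts satisfying case 3, where the density of $G(\hat{\mathcal{U}}^{(t-1)})$ is below $2^{-(t-1)}$. To halve this density, I would apply a base-case-style argument to the family $\hat{\mathcal{U}}^{(t-1)}$ of current upper parts: find an auxiliary curve $\delta$ (for instance another upper part that crosses at least half of the remaining upper parts) and use it to move the cut point of each affected red slightly further up its curve, so the extended lower arc reaches into the previously upper region while the new upper arc has roughly half as many internal crossings. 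To preserve disjointness of these new lower parts from the blues, I would restrict $\hat{\mathcal{B}}^{(t-1)}$ to a constant fraction lying above $\delta$, yielding $\hat{\mathcal{B}}^{(t)}$. A pigeonhole then produces the desired $\hat{\mathcal{R}}^{(t)}$ with $G(\hat{\mathcal{U}}^{(t)})$ of density less than $2^{-t}$; if the refinement is obstructed because the upper parts above $\delta$ still have too many crossings, a direct appeal to Theorem \ref{dgold} or Theorem \ref{thmdg} produces cases 1 or 2 instead.

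The main technical obstacle is in the inductive step. Each red is partitioned into only two connected pieces, so the refinement can only \emph{move} a cut point, not introduce new ones; this means the auxiliary curve $\delta$ must be chosen to satisfy (a) it lies in the region where upper parts live, and sliding the cut to it leaves a constant fraction of blues above it, (b) it separates at least half of the remaining upper-upper crossings so that the density indeed halves, and (c) the resulting lower arcs remain connected while still avoiding a linear subset of blues. Balancing these three constraints simultaneously, and preserving linear sizes through $t$ iterations while correctly invoking Theorem \ref{thmdg} or Theorem \ref{dgold} in the dense sub-cases to extract cases 1 or 2 when the refinement fails, is the crux of the argument.
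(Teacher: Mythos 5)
Your proposal has a genuine gap at both of its load-bearing steps. First, the ``above/below $\gamma_1$'' pigeonholing is not available: $\gamma_1$ is a pseudo-segment, i.e.\ a bounded arc, and its complement is a single connected region, so blue curves cannot be classified as lying ``entirely above'' or ``entirely below'' it, and a blue curve disjoint from $\gamma_1$ can perfectly well cross the lower part of a red curve. Consequently the claim that the lower parts are ``automatically disjoint'' from the chosen blues is false, as is the assertion that every red--red crossing lies on one side of $\gamma_1$ (the upper/lower split is a per-curve combinatorial choice at the crossing with $\gamma_1$, not a geometric separation). In the paper this disjointness is never obtained geometrically: one applies Corollary \ref{lemhom} to the family $\mathcal{L}$ of lower parts and then Theorem \ref{0a0} to $\mathcal{L}$ versus $\mathcal{B}$ (and, later in the argument, Theorem \ref{thmdg} to double-grounded middle pieces versus the blues), so that either one finishes in the complete/crossing case or one passes to linear-sized subfamilies in which the lower parts are disjoint from the blues. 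Relatedly, your dense sub-case is too quick: when you ground between $\gamma_1$ and a popular curve and invoke Theorem \ref{thmdg}, the ``disjoint'' outcome only concerns the double-grounded portions of the red curves, not the whole curves, so it does not yield case 2 of the lemma; the paper instead absorbs those middle portions into the new lower parts and continues.

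Second, the inductive density-halving step is exactly the part you leave unresolved, and the mechanism you sketch (one auxiliary curve $\delta$ crossing half the upper parts, then recutting so each new upper arc has ``roughly half as many internal crossings,'' with blues restricted to those ``above $\delta$'') again leans on the nonexistent above/below structure and on an unproved per-curve halving claim. The paper proves a stronger inductive statement instead: $\mathcal{R}^{(t)}$ carries an equipartition into $2^t$ groups such that the (new) upper parts of curves in different groups are pairwise disjoint, which is what forces the density of $G(\hat{\mathcal{U}})$ below $2^{-t}$. The two-way split at each stage is produced by a weight argument (Jensen) selecting $\gamma_2 \in \mathcal{U}_3$ of weight at least $|\mathcal{U}_3|^2/4$, cutting each curve crossing $\gamma_2$ into a middle part, double grounded by the crossing grounds $\gamma_1,\gamma_2$, and a beyond part; then Theorem \ref{dgold} (or Theorem \ref{ehfpt}) gives two linear families whose middle parts cross completely, and the pseudo-segment condition converts that complete crossing into complete \emph{disjointness} of the corresponding beyond parts across the two groups, while Theorem \ref{thmdg} applied to the middle parts versus the blues either finishes or makes the new lower parts (middle together with old lower) disjoint from the blues. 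These ingredients --- the weight/Jensen selection of $\gamma_2$, the crossing-implies-disjoint use of the pseudo-segment condition across groups, the Ramsey-theoretic (rather than geometric) treatment of the blues, and the $2^t$-group strengthened induction hypothesis --- are missing from your outline, and without them the claimed halving does not go through.
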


\begin{proof}
 Each curve $\alpha \in \mathcal{R}$ is partitioned into two connected parts by $\gamma_1$, say an upper and lower part.  More precisely, we have the partition $\alpha = \alpha_{u}\cup \alpha_{\ell}$, where the parts $\alpha_{u}$ and $\alpha_{\ell}$ are defined, as follows. We start at the left endpoint of $\gamma_1$ and move along $\gamma_1$ until we reach $\alpha\cap \gamma_1$.  At this point, we turn left along $\alpha$ to obtain $\alpha_{u}$ and right  to obtain $\alpha_{\ell}$.  See Figure \ref{figupperlower}.  Let $\mathcal{U}$ ($\mathcal{L}$) be the upper (lower) part of each curve in $\mathcal{R}$, that is, 
 $$\mathcal{U} =\{\alpha_{u}: \alpha \in \mathcal{R}, \alpha  =  \alpha_{\ell}\cup \alpha_u\}\hspace{.5cm}\textnormal{and}\hspace{.5cm}\mathcal{L}=\{\alpha_{\ell}: \alpha  \in \mathcal{R}, \alpha =   \alpha_{\ell}\cup \alpha_u\}.$$

\begin{figure}
\centering
\includegraphics[width=4cm]{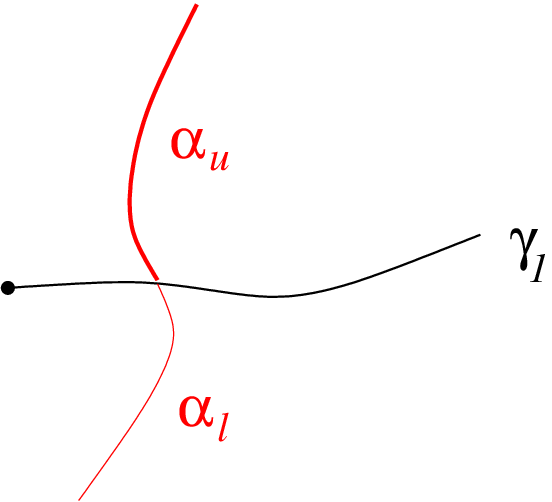}
\caption{Partitioning of the red curve $\alpha = \alpha_u\cup \alpha_{\ell}.$}\label{figupperlower}
\end{figure}

In what follows, for every integer $t\geq 1$, we will obtain subsets $\mathcal{R}^{(t)}\subset\mathcal{R}$, $\mathcal{B}^{(t)}\subset \mathcal{B}$, each of size $\Omega_{\varepsilon'_t}(n)$, such that either every red curve in $\mathcal{R}^{(t)}$ crosses every blue curve in $\mathcal{B}^{(t)}$, or every red curve in $\mathcal{R}^{(t)}$ is disjoint from every blue curve in $\mathcal{B}^{(t)}$, or each curve $\alpha \in \mathcal{R}^{(t)}$ has a new partition into two connected parts $\alpha = \alpha'_{u}\cup \alpha'_{\ell}$, upper and lower, such that the following holds.
     
     \begin{enumerate}

\item  We have $\alpha'_u \subset \alpha_u$, that is, the upper part $\alpha'_u$ is a subcurve of the previous upper part $\alpha_u$.  
     
\item The lower part $\alpha'_{\ell}$ of each curve in $\mathcal{R}^{(t)}$ is disjoint from each blue curve in $\mathcal{B}^{(t)}.$

     \item There is an equipartition $\mathcal{R}^{(t)}  = \mathcal{R}^{(t)}_{1}\cup \cdots \cup \mathcal{R}^{(t)}_{2^{t}}$ into $2^{t}$ parts such that for $1 \leq i < j \leq 2^{t-1}$, the upper part $\alpha'_u$ of each curve $\alpha \in \mathcal{R}^{(t)}_{i}$ is disjoint from the upper part $\beta'_u$ of each curve $\beta \in \mathcal{R}^{(t)}_{j}$.
     \end{enumerate}

\noindent Hence, the lemma follows from the statement above by setting $\hat{\mathcal{B}} = \mathcal{B}^{(t)}$, $\hat{\mathcal{R}} = \mathcal{R}^{(t)}$.

We proceed by induction on $t$.  The bulk of the argument below is actually for the base case $t = 1$, since we will just repeat the entire argument for the inductive step with parameter $\varepsilon'_t$.  Let $\varepsilon'_1$ be a small positive constant that will be determined later such that $\varepsilon'_1 < \varepsilon_1$, where $\varepsilon_1$ is from Theorem~\ref{0a0}. Thus, $G(\mathcal{R})$ has edge density at least $1 - \varepsilon'_1$ and $G(\mathcal{B})$ has edge density less than $\varepsilon'_1$. 

 Let $\delta > 0$ also be a sufficiently small constant determined later, such that $\varepsilon_1' < \delta < \varepsilon_1$. We apply Corollary \ref{lemhom} to $\mathcal{L}$ with parameter $\delta$ and obtain a subset $\mathcal{L}_1\subset \mathcal{L}$ such that $\mathcal{L}_1$ is $\delta$-homogeneous and $|\mathcal{L}_1| = \Omega_{\delta}(n)$.  Let $\mathcal{R}_1\subset \mathcal{R}$ be the red curves in $\mathcal{R}$ corresponding to the curves in 
 $\mathcal{L}_1$, and let $\mathcal{U}_1\subset \mathcal{U}$ be the curves in $\mathcal{U}$ that corresponds to the red curves in $\mathcal{R}_1$.

Without loss of generality, we can assume that the intersection graph $G(\mathcal{L}_1)$ has edge density less than $\delta$.  Indeed, otherwise if $G(\mathcal{L}_1)$ has edge density greater than $ 1- \delta$, by the pseudo-segment condition, the intersection graph $G(\mathcal{U}_1)$ must have edge density less than $\delta$ and a symmetric argument would follow.  In order to apply Theorem \ref{0a0}, we need two subsets of equal size.  By averaging, there is a subset $\mathcal{B}' \subset \mathcal{B}$ with $|\mathcal{B}'|=|\mathcal{L}_1|$ such that the edge density of $G(\mathcal{B}')$ is at most that of $G(\mathcal{B})$. Since $G(\mathcal{L}_1)$ has edge density less than $\delta$ and $G(\mathcal{B}')$ has edge density less than $\varepsilon'_1$, by setting $\varepsilon'_1 < \delta < \varepsilon_1$, we can apply Theorem \ref{0a0} to $\mathcal{L}_1$ and $\mathcal{B}'$ and obtain subsets $\mathcal{L}_2\subset \mathcal{L}_1$ and $\mathcal{B}_1\subset \mathcal{B}'$, each of size $\Omega_{\delta}(n)$, such that every curve in $\mathcal{L}_2$ crosses every blue curve in $\mathcal{B}_1$, or every curve in $\mathcal{L}_2$ is disjoint from every blue curve in $\mathcal{B}_1$.  If we are in the former case, then we are done.  Hence, we can assume that we are in the latter case.  Let $\mathcal{R}_2\subset \mathcal{R}_1$ be the red curves that corresponds to $\mathcal{L}_2$, and let $\mathcal{U}_2 \subset\mathcal{U}_1$ be the curves in $\mathcal{U}_1$ that corresponds to $\mathcal{R}_2$.  We apply Corollary~\ref{lemhom} to $\mathcal{U}_2$ with parameter $\delta$ and obtain a subset $\mathcal{U}_3 \subset \mathcal{U}_2$ such that $\mathcal{U}_3$ is $\delta$-homogeneous and $|\mathcal{U}_3| =  \Omega_{\delta}(n)$.  Let $\mathcal{R}_3$ be the red curves in $\mathcal{R}$ corresponding to $\mathcal{U}_3$, and let $\mathcal{L}_3$ be the curves in $\mathcal{L}_2$ that corresponds to $\mathcal{R}_3$.  

Suppose that the intersection graph $G(\mathcal{U}_3)$ has edge density less than $\delta$.  Since $|\mathcal{B}_1| = \delta_0n$, where $\delta_0 = \delta_0(\delta, \varepsilon_1)$, by Lemma~\ref{subhom}, the intersection graph $G(\mathcal{B}_1)$ has edge density at most $2\varepsilon'_1/\delta^2_0$. Thus, we set $\delta$ and $\varepsilon'_1$ sufficiently small so that $\delta < \varepsilon_1$ and $2\varepsilon'_1/\delta^2_0 
 <\varepsilon_1$.  By averaging, we can find subsets of $\mathcal{U}_3$ and $\mathcal{B}_1$, each of size $\min(|\mathcal{U}_3|,|\mathcal{B}_1|)$ and with densities less than $\varepsilon_1$, and apply Theorem \ref{0a0} to these subsets and obtain subsets $\mathcal{U}_4\subset \mathcal{U}_3$ and  $\mathcal{B}_2\subset \mathcal{B}_1$, each of size $\Omega_{\delta}(n),$ such that every curve in $\mathcal{U}_4$ crosses every blue curve in $\mathcal{B}_2$, or every curve in $\mathcal{U}_4$ is disjoint from every blue curve in $\mathcal{B}_2$.  In both cases, we are done since every curve in $\mathcal{L}_3$ is disjoint from every curve in $\mathcal{B}_2$.  Therefore, we can assume that $G(\mathcal{U}_3)$ has edge density greater than $1 -\delta$.  
   
For each curve $\alpha \in \mathcal{U}_3$, let $N(\alpha)$ denote the set of curves in $\mathcal{U}_3$ that intersects $\alpha$, and let $d(\alpha) = |N(\alpha)|$.  We label the curves $\beta\in N(\alpha)$ with integers $0$ to $d(\alpha)-1$ according to their closest intersection point to the ground $\gamma_1$ along $\alpha$, that is, the label $f_{\alpha}(\beta)$ of $\beta \in N(\alpha)$ is the number of curves in $\mathcal{U}_3$ that intersects the portion of $\alpha$ strictly between $\gamma_1$ and $\alpha\cap \beta$.  Since

$$\sum\limits_{\alpha \in \mathcal{U}_3} d(\alpha)-1  \geq 2(1-\delta)\binom{|\mathcal{U}_3|}{2}  - |\mathcal{U}_3|,$$

\noindent by Jensen's inequality, we have

$$\sum\limits_{\alpha \in \mathcal{U}_3}\sum\limits_{\beta\in N(\alpha)}f_{\alpha}(\beta) = \sum\limits_{\alpha \in \mathcal{U}_3}\binom{d(\alpha) }{2} \geq |\mathcal{U}_3| \binom{\frac{\sum_{\alpha \in \mathcal{U}_3}d(\alpha) }{|\mathcal{U}_3|}}{2} \geq \frac{|\mathcal{U}_3|^3}{  4}.$$

\noindent Let the weight $w(\beta)$ of a curve $\beta \in \mathcal{U}_3$ be the sum of its labels, that is,

$$w(\beta) = \sum\limits_{\alpha: \beta \in N(\alpha)}f_{\alpha}(\beta).$$

\noindent   Hence, the weight $w(\beta)$ is the total number of crossing points along curves $\alpha$ strictly between $\gamma_1$ and $\beta$, where $\alpha$ crosses both $\gamma_1$ and $\beta$. By averaging, there is a curve $\gamma_2 \in \mathcal{U}_3$ whose weight is at least $|\mathcal{U}_3|^2/4$.  

Using $\gamma_2$, we partition each curve $\alpha \in \mathcal{U}_3\setminus\{\gamma_2\}$ that crosses $\gamma_2$ into two connected parts, $\alpha = \alpha_w\cup \alpha_m$, where $\alpha_m$ is the connected subcurve with endpoints on $\gamma_1$ and $\gamma_2$, and $\alpha_w$ is the other connected part.  Set

$$\mathcal{W}_3 = \{\alpha_w: \alpha \in \mathcal{U}_3\setminus \{\gamma_2\}, \alpha\cap \gamma_2 \neq \emptyset\}\hspace{.5cm}\textnormal{and}\hspace{.5cm}\mathcal{M}_3 = \{\alpha_m: \alpha \in \mathcal{U}_3\setminus \{\gamma_2\},\alpha\cap \gamma_2 \neq \emptyset\}.$$

Since $\gamma_2$ has weight at least $|\mathcal{U}_3|^2/4$, by the pigeonhole principle, there are at least $|\mathcal{U}_3|^2/8$ intersecting pairs in $\mathcal{M}_3\times \mathcal{M}_3$, or at least $|\mathcal{U}_3|^2/8$ intersecting pairs in $\mathcal{M}_3\times \mathcal{W}_3$.

\medskip

\noindent \emph{Case 1.}  Suppose there are at least $|\mathcal{U}_3|^2/8$ pairs in $\mathcal{M}_3\times \mathcal{W}_3$ that cross. The set $\mathcal{M}_3$ is double grounded with grounds $\gamma_1$ and $\gamma_2$ that cross exactly once, and every curve in $\mathcal{W}_3$ is disjoint from $\gamma_1$ and $\gamma_2$. As $|\mathcal{M}_3|,|\mathcal{W}_3| \leq |\mathcal{U}_3|$, the density of edges in the bipartite intersection graph of $\mathcal{M}_3$ and $\mathcal{W}_3$ is at least $1/8$. By averaging, we can find subsets of $\mathcal{M}_3$ and $\mathcal{W}_3$ 
each of size $\min(|\mathcal{M}_3|,|\mathcal{W}_3|)$ such that the density of edges in the bipartite intersection graph of these subsets is at least $1/8$. By setting $\delta>0$ sufficiently small, we can apply Theorem \ref{dgold} to these subsets of $\mathcal{M}_3$ and $\mathcal{W}_3$ and obtain subsets $\mathcal{M}_4\subset\mathcal{M}_3$ and $\mathcal{W}'_4\subset \mathcal{W}_3$, each of size $\Omega_{\delta}(n)$, such that each curve in $\mathcal{M}_4$ crosses each curve in $\mathcal{W}'_4$.  Moreover, by the pseudo-segment condition, each curve in $\mathcal{M}_4\cup \mathcal{W}'_4$ corresponds to a unique curve in $\mathcal{U}_3$.  Let $\mathcal{U}_4\subset \mathcal{U}_3$ be the curves that corresponds to $\mathcal{M}_4$ and let $\mathcal{U}_4'\subset \mathcal{U}_3$ be the curves that corresponds to $\mathcal{W}'_4$.  Hence, we set

$$\mathcal{W}_4 = \{\alpha_m: \alpha \in \mathcal{U}_4, \alpha = \alpha_w\cup\alpha_m\}\hspace{.5cm}\textnormal{and}\hspace{.5cm}\mathcal{M}'_4 = \{\alpha_m: \alpha \in \mathcal{U}'_4, \alpha = \alpha_w\cup\alpha_m\}.$$

\noindent See Figure \ref{figca1}.

We apply Theorem \ref{thmdg} to arbitrary subsets of $\mathcal{M}_4$ and $\mathcal{B}_2$, each of size $\min(|\mathcal{M}_4|,|\mathcal{B}_2|)$, and obtain subsets $\mathcal{M}_5 \subset \mathcal{M}_4$ and $\mathcal{B}_3\subset \mathcal{B}_2$, each of size $\Omega_{\delta}(n)$, such that either every red curve in $\mathcal{M}_5$ crosses every blue curve in $\mathcal{B}_3$, or every red curve in $\mathcal{M}_5$ is disjoint from every blue curve in $\mathcal{B}_3$.  In the former case, we are done.  Hence, we can assume that we are in the latter case.  

We again apply Theorem \ref{thmdg} to arbitrary subsets of  $\mathcal{M}'_4$ and $\mathcal{B}_3$, each of size $\min(|\mathcal{M}'_4|,|\mathcal{B}_3|)$,  to obtain subsets $\mathcal{M}'_5 \subset \mathcal{M}'_4$ and $\mathcal{B}_4\subset \mathcal{B}_3$, each of size $\Omega_{\delta}(n)$,  such that either every red curve in $\mathcal{M}'_5$ crosses every blue curve in $\mathcal{B}_4$, or every red curve in $\mathcal{M}'_5$ is disjoint from every blue curve in $\mathcal{B}_4$.  Again, if we are in the former case, we are done.  Hence, we can assume that we are in the latter case.   Let

$$\mathcal{W}_5 = \{\alpha_w: \alpha = \alpha_w\cup\alpha_m, \alpha_m \in \mathcal{M}_5\}\hspace{.5cm}\textnormal{and}\hspace{.5cm}\mathcal{W}'_5 = \{\alpha_w: \alpha = \alpha_w\cup\alpha_m, \alpha_m \in \mathcal{M}'_5\},$$

\noindent and recall that every element in $\mathcal{M}_5$ crosses every element in $\mathcal{W}'_5$.  By the pseudo-segment condition, every element in $\mathcal{W}_5$ is disjoint from every element in $\mathcal{W}'_5$.

Let $\mathcal{R}_5$ be the red curves in $\mathcal{R}$ that corresponds to $\mathcal{W}_5$, and let $\mathcal{R}'_5$ be the red curves in $\mathcal{R}$ that corresponds to $\mathcal{W}_5'$.  We have $|\mathcal{R}_5|, |\mathcal{R}'_5| = \Omega_{\delta}(n)$, and moreover, we can assume that $|\mathcal{R}_5| = |\mathcal{R}'_5|$.  For each curve $\alpha \in \mathcal{R}_5\cup \mathcal{R}'_5$, and its original partition $\alpha = \alpha_u\cup \alpha_{\ell}$ defined by $\gamma_1$, we have a new partition $\alpha = \alpha_u' \cup \alpha_{\ell}'$ defined by $\gamma_2$, where $\alpha_u' = \alpha_w$ and $\alpha'_{\ell} = \alpha_m \cup \alpha_{\ell}$.  By setting $\mathcal{R}^{(1)} = \mathcal{R}_5\cup \mathcal{R}'_5$, and $\mathcal{B}^{(1)}  = \mathcal{B}_4$, where each curve $\alpha \in \mathcal{R}^{(1)}$ is equipped with the partition  $\alpha = \alpha_u' \cup \alpha_{\ell}'$, we satisfy the base case of the statement.

\medskip

\noindent \emph{Case 2}.   The argument is essentially the same as Case 1.  Suppose we have at least  $|\mathcal{U}_3|^2/8$ crossing pairs in $\mathcal{M}_3\times \mathcal{M}_3$.  Then by Theorem \ref{ehfpt}, there are subsets $\mathcal{M}_4,\mathcal{M}_4' \subset \mathcal{M}_3$, each of size $\Omega_{\delta}(n)$, such that every curve in $\mathcal{M}_4$ crosses every curve in $\mathcal{M}_4'$.  Let $\mathcal{U}_4\subset \mathcal{U}$ be the curves that corresponds to $\mathcal{M}_4$ and let $\mathcal{U}_4'\subset \mathcal{U}$ be the curves that corresponds to $\mathcal{M}_4'$.  Set

$$\mathcal{W}_4 = \{\alpha_w: \alpha \in \mathcal{U}_4, \alpha = \alpha_w\cup\alpha_m\}\hspace{.5cm}\textnormal{and}\hspace{.5cm}\mathcal{W}'_4 = \{\alpha_w: \alpha \in \mathcal{U}'_4, \alpha = \alpha_w\cup\alpha_m\}.$$

 \begin{figure}
  \centering
\subfigure[Case 1.]{\label{figca1}\includegraphics[width=0.25\textwidth]{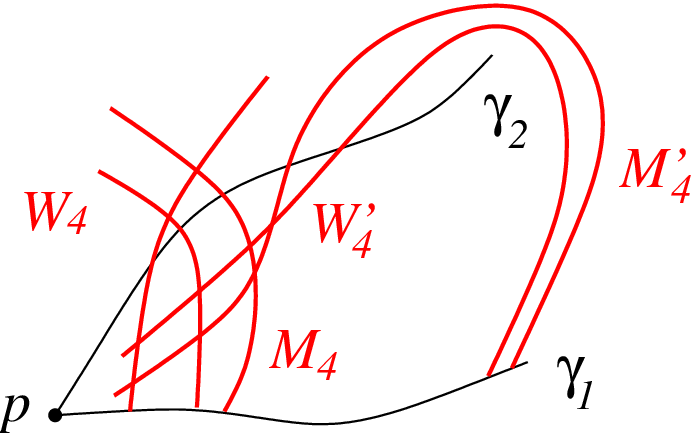}}  \hspace{2cm}
\subfigure[Case 2.]{\label{figca2}\includegraphics[width=0.25\textwidth]{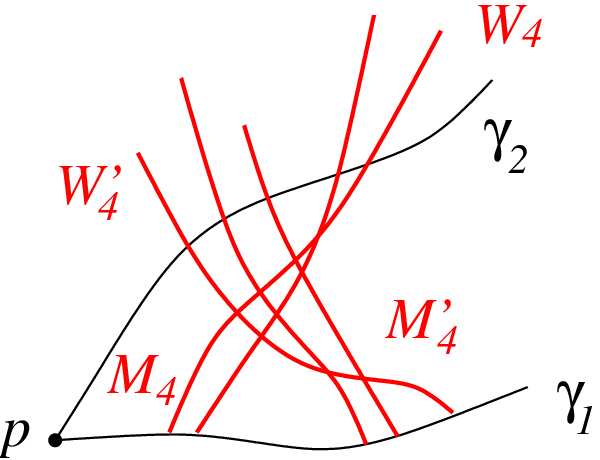}}  \hspace{1cm}
 \caption{In both cases, $\mathcal{W}_4$ is disjoint to $\mathcal{W}_4'.$}
\end{figure}

\noindent See Figure \ref{figca2}.

Hence, by the pseudo-segment condition, every curve in $\mathcal{W}_4$ is disjoint from every curve in $\mathcal{W}'_4$.  By taking arbitrary subsets of $\mathcal{M}_4$ and $\mathcal{B}_2$ of size $\min(|\mathcal{M}_4|,|\mathcal{B}_2|)$, we can apply Theorem \ref{thmdg} to these subsets and obtain subsets $\mathcal{M}_5 \subset \mathcal{M}_4$ and $\mathcal{B}_3\subset \mathcal{B}_2$, each of size $\Omega_{\delta}(n)$, such that either every red curve in $\mathcal{M}_5$ crosses every blue curve in $\mathcal{B}_3$, or every red curve in $\mathcal{M}_5$ is disjoint from every blue curve in $\mathcal{B}_3$.  In the former case, we are done.  Hence, we can assume that we are in the latter case.  

Again, we take an arbitrary subset of $\mathcal{M}'_4$ and $\mathcal{B}_3$ of size $\min(|\mathcal{M}'_4|,|\mathcal{B}_3|)$ and apply Theorem~\ref{thmdg} to $\mathcal{M}'_4$ and $\mathcal{B}_3$, to obtain subsets $\mathcal{M}'_5 \subset \mathcal{M}'_4$ and $\mathcal{B}_4\subset \mathcal{B}_3$, each of size $\Omega_{\delta}(n)$, such that either every red curve in $\mathcal{M}'_5$ crosses every blue curve in $\mathcal{B}_4$, or every red curve in $\mathcal{M}'_5$ is disjoint from every blue curve in $\mathcal{B}_4$.  Again, if we are in the former case, we are done.  Hence, we can assume that we are in the latter case.  Set $\mathcal{R}_5$ be the red curves in $\mathcal{R}$ that corresponds to $\mathcal{M}_5$, and let $\mathcal{R}_5'$ be the red curves in $\mathcal{R}$ that corresponds to $\mathcal{M}'_5$.

 We have $|\mathcal{R}_5|, |\mathcal{R}'_5| = \Omega_{\delta}(n)$, and moreover, we can assume that $|\mathcal{R}_5|= |\mathcal{R}'_5|$.  For each curve $\alpha \in \mathcal{R}_5\cup \mathcal{R}'_5$, and its original partition $\alpha = \alpha_u\cup \alpha_{\ell}$ defined by $\gamma_1$, we have a new partition $\alpha = \alpha_u' \cup \alpha_{\ell}'$ defined by $\gamma_2$, where $\alpha_u' = \alpha_w$ and $\alpha'_{\ell} = \alpha_m \cup \alpha_{\ell}$.  By setting $\mathcal{R}^{(1)} = \mathcal{R}_5\cup \mathcal{R}'_5$, and $\mathcal{B}^{(1)}  = \mathcal{B}_4$, where each curve $\alpha \in \mathcal{R}^{(1)}$ is equipped with the partition  $\alpha = \alpha_u' \cup \alpha_{\ell}'$, we satsify the base case of the statement.

\medskip

 For the inductive step, suppose we have obtained constants $\varepsilon'_{t -1} < \cdots < \varepsilon'_1$ such that the statement follows.  Let $\varepsilon'_t$ be a small constant that will be determined later such that $\varepsilon'_t < \varepsilon'_{t-1}$.  Let $\mathcal{R}$ be a set of $n$ red curves in the plane, all crossed by a curve $\gamma_1$ exactly once, and $\mathcal{B}$ be a set of $n$ blue curves in the plane such that $\mathcal{R}\cup \mathcal{B}\cup \{\gamma_1\}$ is a collection of pseudo-segments. Moreover, $G(\mathcal{R})$ has edge density at least $1 - \varepsilon'_t$ and $G(\mathcal{B})$ has edge density less than $\varepsilon'_t$.  We set  $\delta' < 0$ to be a small constant such that $\varepsilon'_t < \delta' < \varepsilon_{t-1}$.  We repeat the entire argument above, replacing  $\varepsilon'_1$ with $\varepsilon'_t$ and $\delta$ with $\delta'$, to obtain subsets $\mathcal{R}_5,\mathcal{R}'_5 \subset \mathcal{R}$ and $\mathcal{B}_4\subset \mathcal{B}$, each of size $\Omega_{\delta'}(n)$, such that each $\alpha \in \mathcal{R}_5\cup\mathcal{R}'_5$ is equipped with the partition  $\alpha = \alpha_u' \cup \alpha_{\ell}'$, and $\alpha'_{\ell}$ is disjoint to every blue curve in $\mathcal{B}_4$.  Moreover, for $\alpha \in \mathcal{R}_5$ and $\beta \in \mathcal{R}'_5$, where $\alpha = \alpha_u'\cup \alpha_{\ell}'$ and $\beta = \beta_u' \cup \beta_{\ell}'$, $\alpha'_u$ is disjoint to $\beta'_u$.

Since $|\mathcal{R}_5|, |\mathcal{B}_4| \geq \delta_1n$, where $\delta_1$ depends only on $\delta'$, by Theorem \ref{subhom}, $G(\mathcal{R}_5)$ has edge density at least $1  -2\varepsilon'_t/\delta_1^2$ and $G(\mathcal{B}_4)$ has edge density less than $2\varepsilon'_t/\delta_1^2$.    By setting $\varepsilon'_t$ sufficiently small, $G(\mathcal{R}_5)$ has edge density at least $1  -\varepsilon'_{t-1}$, and $G(\mathcal{B}_4)$ has edge density less than $\varepsilon_{t-1}'$.  By averaging, we can find subsets of $\mathcal{R}_5$ and $\mathcal{B}_4$, each of size $\min(|\mathcal{R}_5|,|\mathcal{B}_4|)$ and with densities at least $1-\varepsilon_{t-1}'$ and less than $\varepsilon'_{t-1}$ respectively, and apply induction to these subsets parameter $t' = t-1$, and obtain subsets $\mathcal{R}^{(t-1)} \subset \mathcal{R}_5$, $\mathcal{B}^{(t-1)} \subset \mathcal{B}_4$, each of size $\Omega_{\varepsilon'_{t-1}}(n)$, with the desired properties.  If every red curve in $\mathcal{R}^{(t-1)}$ is disjoint from every blue curve in $\mathcal{B}^{(t-1)}$, or if every red curve in $\mathcal{R}^{(t-1)}$ crosses every blue curve in $\mathcal{B}^{(t-1)}$, then we are done.  Hence, we can assume that each curve $\alpha \in \mathcal{R}^{(t-1)}$ has a partition $\alpha = \alpha_u'' \cup \alpha_{\ell}''$ such that $\alpha_u''$ is a subcurve of $\alpha_u'$, $\alpha''_{\ell}$ is disjoint from every blue curve in $\mathcal{B}^{(t-1)}$, and there is an equipartition

$$\mathcal{R}^{(t-1)} = \mathcal{R}^{(t-1)}_1\cup \cdots \cup \mathcal{R}^{(t-1)}_{2^{t-1}},$$

\noindent such that for $1 \leq i < j \leq 2^{t-1}$, the upper part $\alpha''_u$ of each curve $\alpha \in \mathcal{R}^{(t-1)}_{i}$ is disjoint the upper part $\beta''_u$ of each curve $\beta \in \mathcal{R}^{(t-1)}_{j}$.

Finally, since $|\mathcal{R}'_5|, |\mathcal{B}^{(t-1)}| \geq \delta_2n$, where $\delta_2$ depends only on $\delta'$, by Theorem \ref{subhom}, $G(\mathcal{R}'_5)$ has edge density at least $1  -2\varepsilon'_t/\delta_2^2$ and $G(\mathcal{B}^{(t-1)})$ has edge density less than $2\varepsilon'_t/\delta_2^2$.    By setting $\varepsilon'_t$ sufficiently small, $G(\mathcal{R}'_5)$ has edge density at least $1  -\varepsilon'_{t-1}$, and $G(\mathcal{B}^{(t-1)})$ has edge density less than $\varepsilon_{t-1}'$.   By averaging, we can find subsets of $\mathcal{R}'_5$ and $\mathcal{B}^{(t-1)}$, each of size $\min(|\mathcal{R}'_5|,|\mathcal{B}^{(t-1)}|)$ and with densities at least $1-\varepsilon_{t-1}'$ and less than $\varepsilon'_{t-1}$ respectively, and apply induction to these subsets parameter $t' = t-1$, and obtain subsets $\mathcal{S}^{(t-1)} \subset \mathcal{R}'_5$, $\mathcal{B}^{(t)} \subset \mathcal{B}^{(t-1)}$, each of size $\Omega_{\varepsilon'_{t-1}}(n)$, with the desired properties.  If every red curve in $\mathcal{S}^{(t-1)}$ is disjoint from every blue curve in $\mathcal{B}^{(t)}$, or if every red curve in $\mathcal{S}^{(t-1)}$ crosses every blue curve in $\mathcal{B}^{(t)}$, then we are done.  Hence, we can assume that each curve $\alpha \in \mathcal{S}^{(t-1)}$ has a partition $\alpha = \alpha_u'' \cup \alpha_{\ell}''$ such that $\alpha_u''$ is a subcurve of $\alpha_u'$, $\alpha''_{\ell}$ is disjoint from every blue curve in $\mathcal{B}^{(t-1)}$, and there is an equipartition

$$\mathcal{S}^{(t-1)} = \mathcal{S}^{(t-1)}_1\cup \cdots \cup \mathcal{S}^{(t-1)}_{2^{t-1}},$$

\noindent such that for $1 \leq i < j \leq 2^{t-1}$, the upper part $\alpha''_u$ of each curve $\alpha \in \mathcal{S}^{(t-1)}_{i}$ is disjoint the upper part $\beta''_u$ of each curve $\beta \in \mathcal{S}^{(t-1)}_{j}$.  We then (arbitrarily) remove curves from each part in $\mathcal{R}_i^{(t-1)}$ and $\mathcal{S}_j^{(t-1)}$ such that the resulting parts all have the same size and for

$$\mathcal{R}^{(t)} = \mathcal{R}^{(t-1)}_1\cup \cdots \cup \mathcal{R}^{(t-1)}_{2^{t-1}}\cup \mathcal{S}^{(t-1)}_1\cup \cdots \cup \mathcal{S}^{(t-1)}_{2^{t-1}},$$

\noindent we have $|\mathcal{R}^{(t)}|  = \Omega_{\varepsilon_{t-1}'}(n)$. 
 Then $\mathcal{R}^{(t)}$ and $\mathcal{B}^{(t)}$ has the desired properties.\end{proof}

We now prove the following.

\begin{theorem}\label{thmhvl}
There is an absolute constant $\varepsilon_3 >0$ such that the following holds. Let $\mathcal{R}$ be a set of $n$ red curves in the plane and $\mathcal{B}$ be a set of $n$ blue curves in the plane such that $\mathcal{R}\cup \mathcal{B}$ is a collection of pseudo-segments, and the intersection graph $G(\mathcal{B})$ has edge density less than $\varepsilon_3$, and $G(\mathcal{R})$ has edge density at least $1-\varepsilon_3$.  

Then there are subsets $\mathcal{R}'\subset\mathcal{R}$, $\mathcal{B}'\subset \mathcal{B}$, each of size $\Omega(n)$, such that either every red curve in $\mathcal{R}$ crosses every blue curve in $\mathcal{B}$, or every red curve in $\mathcal{R}$ is disjoint from every blue curve in $\mathcal{B}$.
    
\end{theorem}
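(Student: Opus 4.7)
The plan is to first reduce to the setting of Lemma \ref{keylem} by a simple averaging argument, then apply that lemma with a sufficiently large parameter $t$, and finally combine its conclusion with Theorem \ref{0a0} applied to the ``upper parts'' of the red curves.

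First, since $G(\mathcal{R})$ has edge density at least $1-\varepsilon_3$, by averaging there exists $\gamma_1 \in \mathcal{R}$ crossing at least $(1-\varepsilon_3)(n-1)$ other curves of $\mathcal{R}$. Let $\mathcal{R}_0 \subset \mathcal{R}\setminus \{\gamma_1\}$ be the set of these curves, each of which is crossed by $\gamma_1$ exactly once by the pseudo-segment condition. By Lemma \ref{subhom}, $G(\mathcal{R}_0)$ still has edge density at least $1 - 2\varepsilon_3/(1-\varepsilon_3)^2$, and for any subset $\mathcal{B}_0 \subset \mathcal{B}$ of size $|\mathcal{R}_0|$ obtained by deleting a few blue curves, $G(\mathcal{B}_0)$ has edge density at most $2\varepsilon_3/(1-\varepsilon_3)^2$. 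Choose $t$ to be the least integer with $2^{-t} < \varepsilon_1$, where $\varepsilon_1$ is from Theorem \ref{0a0}, and let $\varepsilon'_t$ be the corresponding constant from Lemma \ref{keylem}. By taking $\varepsilon_3$ small enough, $G(\mathcal{R}_0)$ has density $\geq 1 - \varepsilon'_t$ and $G(\mathcal{B}_0)$ has density $<\varepsilon'_t$.

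Now apply Lemma \ref{keylem} to $\mathcal{R}_0$, $\mathcal{B}_0$, $\gamma_1$, and $t$. If the output is a linear-sized pair of families with every red crossing every blue, or every red disjoint from every blue, we are done. Otherwise we obtain linear-sized subsets $\hat{\mathcal{R}} \subset \mathcal{R}_0$ and $\hat{\mathcal{B}} \subset \mathcal{B}_0$ and a partition $\alpha = \hat{\alpha}_u \cup \hat{\alpha}_\ell$ of each $\alpha \in \hat{\mathcal{R}}$ into connected parts, with $\hat{\mathcal{L}}$ disjoint from $\hat{\mathcal{B}}$ and the intersection graph $G(\hat{\mathcal{U}})$ of the upper parts having edge density less than $2^{-t} < \varepsilon_1$. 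Observe that $\hat{\mathcal{U}} \cup \hat{\mathcal{B}}$ is a collection of pseudo-segments, since subcurves of pseudo-segments are again pseudo-segments. By Lemma \ref{subhom} and the choice of $\varepsilon_3$, after passing to subsets of common size we may ensure that both $G(\hat{\mathcal{U}})$ and $G(\hat{\mathcal{B}})$ have edge density less than $\varepsilon_1$, so Theorem \ref{0a0} applies and yields linear-sized subsets $\hat{\mathcal{U}}' \subset \hat{\mathcal{U}}$ and $\mathcal{B}' \subset \hat{\mathcal{B}}$ that are either complete or empty to each other.

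Let $\mathcal{R}'\subset \hat{\mathcal{R}}$ be the set of red curves whose upper parts lie in $\hat{\mathcal{U}}'$. If every curve in $\hat{\mathcal{U}}'$ crosses every curve in $\mathcal{B}'$, then each $\alpha \in \mathcal{R}'$ crosses every $\beta \in \mathcal{B}'$ already through $\hat{\alpha}_u \subset \alpha$. If instead every curve in $\hat{\mathcal{U}}'$ is disjoint from every curve in $\mathcal{B}'$, then combined with the disjointness of $\hat{\mathcal{L}}$ from $\hat{\mathcal{B}}$, each $\alpha \in \mathcal{R}'$ is disjoint from every $\beta \in \mathcal{B}'$. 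Either way we obtain the desired homogeneous bipartite conclusion. Since all passages to subsets lost only constant factors, $|\mathcal{R}'|, |\mathcal{B}'| = \Omega(n)$. The main substantive work is hidden in Lemma \ref{keylem}; the only remaining obstacle here is the elementary but careful bookkeeping of the constants $\varepsilon_3, \varepsilon'_t, \varepsilon_1$ through Lemma \ref{subhom} so that the edge-density hypotheses of Theorem \ref{0a0} are met on the subfamilies $\hat{\mathcal{U}}$ and $\hat{\mathcal{B}}$.
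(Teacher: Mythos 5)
Your proposal is correct and follows essentially the same route as the paper's own proof: pick a high-degree curve $\gamma_1\in\mathcal{R}$, pass to the curves it crosses, apply Lemma \ref{keylem} with $t$ chosen so that $2^{-t}<\varepsilon_1$, and then finish by applying Theorem \ref{0a0} to the sparse family of upper parts together with $\hat{\mathcal{B}}$, using the disjointness of the lower parts in the empty case. The only differences are in the explicit density constants from Lemma \ref{subhom}, which do not affect the argument.
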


\begin{proof}
    
Let $t$ be a fixed large integer such that $2^{-t} < \varepsilon_1$, where $\varepsilon_1$ is defined in Theorem \ref{0a0}. Let $\varepsilon_3$ be a small constant determined later such that $\varepsilon_3 < \varepsilon'_t$, where $\varepsilon'_t$ is defined in Lemma \ref{keylem}.  Recall that $\varepsilon'_t < \varepsilon_1$. Since $G(\mathcal{R})$ has edge density at least $1 - \varepsilon_3$, there is a curve $\gamma_1 \in \mathcal{R}$ such that $\gamma_1$ crosses at least $n/2$ red curves in $\mathcal{R}$.  Let $\mathcal{R}_0\subset \mathcal{R}$ be the red curves that crosses $\gamma_1$.  By Lemma~\ref{subhom}, $G(\mathcal{R}_0)$ has edge density at least $1-8\varepsilon_3$.  By averaging, we can find a subset $\mathcal{B}'\subset \mathcal{B}$ of size $|\mathcal{R}_0|$ whose edge denesity less than $\varepsilon_3$. By setting $\varepsilon_3$ sufficiently small so that $8\varepsilon_3 < \varepsilon_t'$, we can apply Lemma \ref{keylem} to $\mathcal{R}_0$ and $\mathcal{B}'$ with parameter $t$, and obtain subsets $\hat{\mathcal{R}}\subset\mathcal{R}_0$, $\hat{\mathcal{B}}\subset \mathcal{B}$, each of size $\Omega_{\varepsilon_t'}(n)$,  with the desired properties.  If  every red curve in $\hat{\mathcal{R}}$ crosses every blue curve in $\hat{\mathcal{B}}$, or every red curve in $\hat{\mathcal{R}}$ is disjoint from every blue curve in $\hat{\mathcal{B}}$, then we are done.  Therefore, we can assume that each curve $\alpha \in \hat{\mathcal{R}}$ has a partition into two parts $\alpha = \alpha'_{u}\cup \alpha'_{\ell}$ with the properties described in Lemma \ref{keylem}.  Set

$$\mathcal{U} = \{\alpha'_u: \alpha \in \hat{\mathcal{R}}, \alpha = \alpha'_u\cup \alpha'_{\ell}\}\hspace{.5cm}\textnormal{and}\hspace{.5cm}\mathcal{L} = \{\alpha'_{\ell}: \alpha \in \hat{\mathcal{R}}, \alpha = \alpha'_u\cup \alpha'_{\ell}\}.$$

Hence, every curve in $\mathcal{L}$ is disjoint from every curve in $\hat{\mathcal{B}}$, and $G(\mathcal{U})$ has edge density at most $2^{-t} < \varepsilon_1$.    Since $|\hat{\mathcal{B}}| \geq \delta n$, where $\delta$ depends only on $\varepsilon'_t$, by Lemma \ref{subhom}, $G(\hat{\mathcal{B}})$ has edge density at most $2\varepsilon_3/\delta^2$.  By setting $\varepsilon_3$ sufficiently small so that $2\varepsilon_3/\delta^2_0 < \varepsilon_1$, $G(\hat{\mathcal{B}})$ has edge density at most $\varepsilon_1$.  By averaging, we can find subsets of $\mathcal{U}$ and $\hat{\mathcal{B}}$, each of size $\min(|\mathcal{U}|,|\hat{\mathcal{B}}|)$ and with densities at most $\varepsilon_1$, and apply Theorem~\ref{0a0} to these subsets to obtain subsets $\mathcal{U}'\subset \mathcal{U}$ and $\mathcal{B}'\subset \hat{\mathcal{B}}$, each of size $\Omega_{\varepsilon_3}(n)$, such that every curve in $\mathcal{U}'$ is disjoint from every curve in $\mathcal{B}'$, or  every curve in $\mathcal{U}'$ crosses every curve in $\mathcal{B}'$.  By setting $\mathcal{R}'$ to be the red curves in $\mathcal{R}$ corresponding to $\mathcal{U}'$, every red curve in $\mathcal{R}'$ is disjoint from every blue curve in $\mathcal{B}'$, or every red curve in $\mathcal{R}'$ crosses every blue curve in $\mathcal{B}'$, and each subset has size $\Omega_{\varepsilon_3}(n)$. \end{proof}

\subsection{High versus high edge density}

Finally, we consider the case when the intersection graphs $G(\mathcal{R})$ and $G(\mathcal{B})$ both have edge density at least $1-\varepsilon$.  By copying the proof of Theorem \ref{thmhvl}, except using Theorem \ref{thmhvl} (high versus low density) instead of Theorem \ref{0a0} (low versus low density) in the argument, we obtain the following.

\begin{theorem}\label{thmhvh}
There is an absolute constant $\varepsilon_4>0$ such that the following holds. Let $\mathcal{R}$ be a set of $n$ red curves in the plane and $\mathcal{B}$ be a set of $n$ blue curves in the plane such that $\mathcal{R}\cup \mathcal{B}$ is a collection of pseudo-segments, and the intersection graphs $G(\mathcal{B})$ and $G(\mathcal{R})$ both have edge density at least $1-\varepsilon_4$.  

Then there are subsets $\mathcal{R}'\subset\mathcal{R}$, $\mathcal{B}'\subset \mathcal{B}$, each of size $\Omega(n)$, such that either every red curve in $\mathcal{R}$ crosses every blue curve in $\mathcal{B}$, or every red curve in $\mathcal{R}$ is disjoint from every blue curve in $\mathcal{B}$.
    
\end{theorem}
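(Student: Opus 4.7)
The plan is to mirror the proof of Theorem \ref{thmhvl} step by step, replacing each invocation of Theorem \ref{0a0} (low versus low density) by an invocation of Theorem \ref{thmhvl} (high versus low density), possibly after relabeling the colors. The reason this substitution is legitimate is that at every point in the proof where Theorem \ref{0a0} was applied to a pair of families $(\mathcal{X}, \mathcal{Y})$, the family $\mathcal{X}$ arose from $\mathcal{R}$ and was forced to be of low edge density by pseudo-segment considerations (either by a WLOG split between upper and lower parts, or by the $2^{-t}$ density guarantee in Lemma \ref{keylem}), while $\mathcal{Y}$ was a linear-sized subset of $\mathcal{B}$. In the old setting $\mathcal{Y}$ had low density; in the new setting $\mathcal{Y}$ will have density at least $1-O(\varepsilon_4)$ by Lemma \ref{subhom}, so Theorem \ref{thmhvl} applies with the color roles swapped.

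Concretely, first I would establish a high-versus-high analog of Lemma \ref{keylem}: assume $\mathcal{R}$ is all crossed by $\gamma_1$ exactly once, $G(\mathcal{R})$ has density at least $1-\varepsilon''_t$ and $G(\mathcal{B})$ has density at least $1-\varepsilon''_t$, and obtain the same conclusion (either homogeneity between linear subsets, or a partition of each red curve into $\alpha'_u \cup \alpha'_\ell$ with $\alpha'_\ell$ disjoint from $\hat{\mathcal{B}}$ and an equipartition of $\hat{\mathcal{R}}$ into $2^t$ parts whose upper halves are pairwise disjoint across parts). The proof copies the original proof of Lemma \ref{keylem} verbatim: apply Corollary \ref{lemhom} to obtain $\delta$-homogeneous refinements $\mathcal{L}_1, \mathcal{U}_3$; WLOG each is low density, otherwise swap to its complement by the pseudo-segment dichotomy. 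Then where the original proof invoked Theorem \ref{0a0} to the low/low pairs $(\mathcal{L}_1,\mathcal{B}')$ and $(\mathcal{U}_3,\mathcal{B}_1)$, we instead invoke Theorem \ref{thmhvl} to the low/high pairs $(\mathcal{L}_1,\mathcal{B}')$ and $(\mathcal{U}_3,\mathcal{B}_1)$, after verifying that $\mathcal{B}'$ and $\mathcal{B}_1$ still have density at least $1-\varepsilon_3$ via Lemma \ref{subhom} (by choosing $\varepsilon''_t$ small enough compared to the shrinkage factor). Each of the inner invocations of Theorem \ref{thmdg} and Theorem \ref{dgold} remains unchanged. The inductive step on $t$ is identical to the original.

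With this adapted lemma, I would run the outer argument of Theorem \ref{thmhvl} almost verbatim. Fix a large integer $t$ with $2^{-t} < \varepsilon_3$. Since $G(\mathcal{R})$ has density at least $1-\varepsilon_4$, some $\gamma_1 \in \mathcal{R}$ crosses at least $n/2$ red curves; let $\mathcal{R}_0$ be these and note $G(\mathcal{R}_0)$ still has density $\geq 1-8\varepsilon_4$. Choose an equal-size subset $\mathcal{B}' \subset \mathcal{B}$; by Lemma \ref{subhom}, $G(\mathcal{B}')$ still has density $\geq 1-8\varepsilon_4$. Apply the high-versus-high analog of Lemma \ref{keylem} with parameter $t$. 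If we obtain a homogeneous pair we are done; otherwise we obtain $\hat{\mathcal{R}}, \hat{\mathcal{B}}$ and the splitting $\alpha = \alpha'_u \cup \alpha'_\ell$ giving $\mathcal{U}$ with $G(\mathcal{U})$ of density $< 2^{-t} < \varepsilon_3$ and $\mathcal{L}$ disjoint from $\hat{\mathcal{B}}$. At the final step the original proof applied Theorem \ref{0a0} to $(\mathcal{U},\hat{\mathcal{B}})$; we instead apply Theorem \ref{thmhvl} to $(\mathcal{U},\hat{\mathcal{B}})$ (with $\hat{\mathcal{B}}$ playing the high-density role after swapping the color labels). We obtain $\mathcal{U}' \subset \mathcal{U}$ and $\mathcal{B}'' \subset \hat{\mathcal{B}}$, each of size $\Omega(n)$, that are complete or empty between each other. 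The corresponding $\mathcal{R}' \subset \hat{\mathcal{R}}$ inherits this homogeneity through the $\alpha = \alpha'_u \cup \alpha'_\ell$ decomposition, since $\mathcal{L}$ is uniformly disjoint from $\hat{\mathcal{B}}$.

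The main obstacle is the bookkeeping of constants. One must fix $t$ first so that $2^{-t} < \varepsilon_3$, then choose $\varepsilon''_t$ small enough (in terms of the shrinkage constants produced by Corollary \ref{lemhom} and successive restrictions) so that every call to Theorem \ref{thmhvl} inside the adapted Lemma \ref{keylem} sees blue density at least $1-\varepsilon_3$, and finally choose $\varepsilon_4 \ll \varepsilon''_t$ so that the initial restriction to $\mathcal{R}_0$ preserves enough of the density hypothesis. All of these inequalities can be satisfied by taking the parameters in decreasing order; no new geometric input beyond Theorem \ref{thmhvl} and the adapted lemma is required. The qualitative reason the strategy succeeds is that the proof of Lemma \ref{keylem} only ever needed the blue family to be on the low side of a bipartite Ramsey statement, and replacing low/low Ramsey by low/high Ramsey leaves the combinatorial skeleton of the argument intact.
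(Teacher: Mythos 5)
Your proposal is correct and takes essentially the same approach as the paper: the paper's own proof of Theorem \ref{thmhvh} is exactly the observation that one copies the proof of Theorem \ref{thmhvl} (including the argument of Lemma \ref{keylem}), replacing each application of Theorem \ref{0a0} by Theorem \ref{thmhvl}. Your additional bookkeeping via Lemma \ref{subhom} and the ordering of the constants simply makes explicit what the paper leaves implicit.
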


\section{Proof of Theorem \ref{main}}\label{pfmain}

Let $\mathcal{R}$ be a set of $n$ red curves in the plane, and $\mathcal{B}$ be a set of $n$ blue curves in the plane such that $\mathcal{R}\cup\mathcal{B}$ is a collection of pseudo-segments.  Let $\varepsilon$ be a sufficiently small constant such that $\varepsilon < \varepsilon_4 < \varepsilon_3 < \varepsilon_1$, where $\varepsilon_1$ is from Theorem \ref{0a0}, $\varepsilon_3$ is from Theorem \ref{thmhvl}, and $\varepsilon_4$ is from Theorem~\ref{thmhvh}.  We apply Corollary~\ref{lemhom} to both $\mathcal{R}$ and $\mathcal{B}$ and obtain subsets $\mathcal{R}_1\subset \mathcal{R}$ and $\mathcal{B}_1\subset \mathcal{B}$ such that both $G(\mathcal{R}_1)$ and $G(\mathcal{B}_1)$ are $\varepsilon$-homogeneous.  Moreover, we can assume that $|\mathcal{R}_1| = |\mathcal{B}_1|$.

If both $G(\mathcal{R}_1)$ and $G(\mathcal{B}_1)$ have edge densities less than $\varepsilon$, then, since $\varepsilon$ is sufficiently small, we can apply Theorem \ref{0a0} to obtain subsets $\mathcal{R}_2\subset \mathcal{R}_1$ and $\mathcal{B}_2\subset \mathcal{B}_1$, each of size $\Omega_{\varepsilon}(n)$, such that either every red curve in $\mathcal{R}_2$ is disjoint from every blue curve in $\mathcal{B}_2$, or every red curve in $\mathcal{R}_2$ crosses every blue curve in $\mathcal{B}_2$. 

If one of the graphs of $G(\mathcal{R}_1)$ and $G(\mathcal{B}_1)$ has edge density less than $\varepsilon$, and the other has edge density greater than $1-\varepsilon$, then we apply Theorem \ref{thmhvl} to $\mathcal{R}_1$ and $\mathcal{B}_1$ to  obtain subsets $\mathcal{R}_2\subset \mathcal{R}_1$ and $\mathcal{B}_2\subset \mathcal{B}_1$, each of size $\Omega_{\varepsilon}(n)$, such that either every red curve in $\mathcal{R}_2$ is disjoint from every blue curve in $\mathcal{B}_2$, or every red curve in $\mathcal{R}_2$ crosses every blue curve in $\mathcal{B}_2$. 

Finally, if both $G(\mathcal{R}_1)$ and $G(\mathcal{B}_1)$ have edge densities at least $1 - \varepsilon$, then, since $\varepsilon$ is sufficiently small, we can apply Theorem \ref{thmhvh} to obtain subsets $\mathcal{R}_2\subset \mathcal{R}_1$ and $\mathcal{B}_2\subset \mathcal{B}_1$, each of size $\Omega_{\varepsilon}(n)$, such that either every red curve in $\mathcal{R}_2$ is disjoint from every blue curve in $\mathcal{B}_2$, or every red curve in $\mathcal{R}_2$ crosses every blue curve in $\mathcal{B}_2$. $\hfill\square$

\section{Topological graphs with no $k$ pairwise disjoint edges}\label{drawingapp}

In this section, we prove Theorem \ref{disjoint}. 
 Recall that the {\it odd-crossing number} of $G$, denoted by $\ocn(G)$, is the minimum possible number of pairs of edges that cross an odd number of times, over all drawings of $G$. The {\it bisection width} of a graph $G$ is defined as

$$b(G) = \min\limits_{|V_1|,|V_2|\leq 2n/3}|E(V_1,V_2)|,$$

\noindent where the minimum is take over all partitions $V(G) = V_1\cup V_2$, such that $|V_1|,|V_2|\leq 2n/3$.  The following lemma, due to Pach and T\'oth \cite{crossing}, relates the odd-crossing number of a graph to its bisection width.

\begin{lemma}[\cite{pach2}]\label{bisect}
If $G$ is a graph with
 $n$ vertices of degrees $d_1,\ldots,d_n$, then
 $$b(G)\leq O\left(\log n \sqrt{\ocn(G)+\sum_{i=1}^n d_i^2}\right) .$$
 \end{lemma}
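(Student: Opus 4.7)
The plan is to reduce the question to a planar separator problem by planarizing an optimal drawing of $G$ at its odd crossings, applying a weighted version of the Lipton-Tarjan planar separator theorem, and then iterating to balance the partition.

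First, I would fix a drawing of $G$ realizing $\ocn(G) = m$, and preprocess it so that every pair of non-adjacent edges crosses at most once and every such crossing is odd. This step relies on the Hanani-Tutte phenomenon exploited by Pach and T\'oth: pairs of edges crossing an even number of times can be locally redrawn and ``uncrossed'' without affecting which pairs cross an odd number of times. After this step, the drawing has at most $m$ crossings in total. Placing a new degree-four vertex at each crossing yields a planar graph $G^*$ on at most $n+m$ vertices in which each original edge of $G$ corresponds to a path.

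Second, I would apply the weighted Lipton-Tarjan separator theorem to $G^*$, putting weight $d_i^2$ on the original vertex of degree $d_i$ and weight $0$ on each subdivision vertex. This produces a vertex set $S$ of size $O\!\left(\sqrt{m + \sum_i d_i^2}\right)$ whose removal partitions $V(G^*)$ into two pieces of total weight at most $\tfrac{2}{3}\sum_i d_i^2$ each. Translating this back to $G$, one obtains an edge cut of size $O\!\left(\sqrt{m + \sum_i d_i^2}\right)$ (edges of $G$ corresponding to paths of $G^*$ that meet $S$) whose removal partitions $V(G)$ into two parts, each with squared-degree sum at most $\tfrac{2}{3}\sum_i d_i^2$. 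The fact that vertices of $S$ in $V(G)$ contribute only $O\!\left(\sqrt{\sum_i d_i^2}\right)$ edges to the cut is a standard Cauchy-Schwarz estimate, $\sum_{v\in S} d_v \le \sqrt{|S|\sum_{v\in S} d_v^2}$, absorbed into the separator bound.

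Third, because the bisection width requires a partition balanced in vertex count rather than squared degree, I would iterate the separator argument $O(\log n)$ times on the heavier side of the current partition, each iteration cutting at most $O\!\left(\sqrt{m+\sum_i d_i^2}\right)$ additional edges (the squared-degree bound on each side is inherited from the original graph). Standard bookkeeping, as in the Pach-Shahrokhi-Szegedy analysis for the ordinary crossing number, costs a factor of $O(\log n)$ in the total number of cut edges, yielding the claimed bound on $b(G)$.

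The main obstacle is the initial preprocessing step: the fact that even crossings can be ignored for the purposes of planarization is not obvious and is where the proof for $\ocn$ genuinely differs from, and is more subtle than, the original argument for $\cn$. Once the drawing has been cleaned so that only odd crossings remain, the weighted planar separator theorem combined with the iteration above is a routine adaptation of the crossing-number bisection bound.
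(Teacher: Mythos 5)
The paper does not actually prove Lemma \ref{bisect}; it is quoted from Pach and T\'oth \cite{crossing,pach2}, so the question is whether your argument is sound on its own, and it is not: the preprocessing step is false. You assume that a drawing realizing $\ocn(G)=m$ can be redrawn so that evenly-crossing pairs become disjoint while each oddly-crossing pair crosses exactly once, leaving at most $m$ crossings in total. Such a drawing would witness $\cn(G)\leq \ocn(G)$, but Pelsmajer, Schaefer, and \v{S}tefankovi\v{c} constructed graphs with $\ocn(G)<\cn(G)$, so no such redrawing exists in general. The Hanani--Tutte-type results you invoke are much weaker than what you need: the classical theorem only says that if \emph{all} independent pairs cross evenly then $G$ is planar, and the later ``removing even crossings'' theorem applies only to edges that cross \emph{every} other edge an even number of times, not to individual evenly-crossing pairs; neither allows you to uncross one even pair while controlling the crossing parities of all other pairs. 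This is exactly the subtlety that makes $\ocn$ harder than $\cn$, and it is where the cited Pach--T\'oth argument must (and does) depart from naive planarization.

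Without that step the rest collapses quantitatively. The planarization $G^*$ has $n$ plus the \emph{actual} number of crossings of the drawing as its vertex count, and the best general relation available is $\cn(G)=O(\ocn(G)^2)$ (Pach--T\'oth), so the weighted separator argument yields only $b(G)\leq O(\log n\,\sqrt{\ocn(G)^2+\sum_i d_i^2}\,)=O(\log n\,(\ocn(G)+\sqrt{\sum_i d_i^2}\,))$, which is strictly weaker than the lemma --- the entire point of the statement is the square root on $\ocn(G)$. Your second and third steps (the weighted Lipton--Tarjan separator with vertex weights $d_i^2$ and the $O(\log n)$ balancing iteration) are the standard Leighton/Pach--Shahrokhi--Szegedy machinery and would be fine for the ordinary crossing number, but as written your proposal proves the bisection bound only with $\cn(G)$, not $\ocn(G)$, and the gap at the first step cannot be patched by a local redrawing argument.
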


\medskip

 \noindent Since all graphs contain a bipartite subgraph with at least half of its edges, Theorem~\ref{disjoint} immediately follows from the following theorem.

 \begin{theorem}
 \label{proof}

If $G = (V,E)$ is an $n$-vertex simple topological bipartite graph with no $k$ pairwise disjoint edges, then $|E(G)| \leq n(\log n)^{O(\log k)}$.

 \end{theorem}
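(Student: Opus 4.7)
I would prove Theorem \ref{proof} by induction on $k$. The base case $k=2$ is the classical thrackle bound of Lov\'asz--Pach--Szegedy, which gives $|E(G)|=O(n)$. For the inductive step, fix $k\ge 3$, assume the bound for all $k'<k$, and let $G$ be an $n$-vertex bipartite simple topological graph with no $k$ pairwise disjoint edges. I would further run an inner induction on $n$, aiming to show $|E(G)|\le n(\log n)^{c\log k}$ for a sufficiently large absolute constant $c$. The two main ingredients are the Pach--T\'oth bisection width lemma (Lemma \ref{bisect}) and Theorem \ref{appdis}.

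The outer structure is a divide-and-conquer: if $G$ admits a cut $(V_1,V_2)$ with $|V_1|,|V_2|\le 2n/3$ and bisection width $b(G)$, then $|E(G)|\le f(|V_1|)+f(|V_2|)+b(G)$, where $f(n)$ denotes the conjectured bound. A routine calculation with the ansatz $f(n)=n(\log n)^{c\log k}$ shows that the inductive step closes provided
\[
b(G) \;=\; O\!\left(\tfrac{n(\log n)^{c\log k}\log k}{\log n}\right),
\]
since $(\log(2n/3))^{c\log k}\le (\log n)^{c\log k}\bigl(1-\Omega(\log k/\log n)\bigr)$. Via Lemma~\ref{bisect}, this in turn reduces to bounding $\ocn(G)+\sum_v d_v^2$ by $O\bigl(n^2(\log n)^{2c\log k-4}(\log k)^2\bigr)$. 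The contribution of $\sum_v d_v^2$ is controlled by pruning: if some vertex has extremely high degree, delete it and charge the lost edges to the target bound; otherwise $\sum_v d_v^2\le 2\Delta m$ is already within budget.

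The heart of the argument is the upper bound on $\ocn(G)$, and this is where Theorem \ref{appdis} enters. I would first extract a large matching $M\subseteq E(G)$ of size $|M|\ge m/\Delta$; its edges are pseudo-segments. The plan is to show that if the proportion of disjoint pairs in $M$ exceeds a suitably small threshold $\varepsilon$, then one can exhibit $k$ pairwise disjoint edges in $G$, contradicting our hypothesis. Given such density, a random bipartition $M=M_1\sqcup M_2$ inherits density at least $\Omega(\varepsilon)$ across the cut, and Theorem \ref{appdis} produces $M_1'\subseteq M_1$, $M_2'\subseteq M_2$ with $|M_i'|\ge \varepsilon^{c_0}|M|$ such that every edge of $M_1'$ is disjoint from every edge of $M_2'$. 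Iterating this procedure for $\lceil\log_2 k\rceil$ levels on each side yields a partition of a subfamily of $M$ into $k$ groups of size $\ge |M|\cdot(\varepsilon^{c_0}/2)^{\lceil\log_2 k\rceil}$, with the property that any two edges chosen from distinct groups are pairwise disjoint; picking one representative per group gives the desired $k$ pairwise disjoint edges. Hence the disjoint-density in $M$ must be below $\varepsilon$, which (combined with the elementary identity that every pair of matching edges either crosses, is disjoint, or is absorbed by pruning) yields an upper bound on the number of crossings involving $M$, and ultimately on $\ocn(G)$.

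The main obstacle I anticipate is the level-by-level control of the density hypothesis inside the iteration: each application of Theorem \ref{appdis} shrinks the matching by a factor $\varepsilon^{c_0}$, and one needs the disjoint density in each surviving subfamily to stay above the threshold throughout $\log_2 k$ levels. If at an intermediate level the density drops, the argument must pivot: the corresponding subfamily is then a near-thrackle, whose structure should be fed back into the bisection recursion (using Lemma \ref{bisect}) or used to recurse on $k'<k$ via the inductive hypothesis. Coordinating the two inductions, so that the $(\log n)^{O(\log k)}$ budget is preserved through the bisection recursion, the iterated density argument, and the high-degree pruning, is the technically delicate bookkeeping step of the proof.
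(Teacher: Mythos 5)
There is a genuine gap, and it sits exactly at the step you call the heart of the argument: the bound on $\ocn(G)$. In the given drawing of a \emph{simple} topological graph, every crossing pair of edges crosses exactly once, i.e., an odd number of times; so the odd crossings of that drawing are precisely the crossing pairs, and these are abundant exactly when disjoint pairs are scarce. Your assertion that a low disjoint-pair density ``yields an upper bound on the number of crossings involving $M$, and ultimately on $\ocn(G)$'' is therefore backwards: few disjoint pairs means \emph{many} crossings, and nothing in your outline produces a drawing witnessing a small odd-crossing number. The paper's proof hinges on a redrawing step of Pach and T\'oth that crucially uses bipartiteness: place one vertex class above the line $y=1$ and the other below $y=0$, flip one half-plane, and reconnect across the strip by segments; a parity computation then shows every pair that crossed originally crosses an even number of times in the new drawing, so $\ocn(G)$ is at most the number of disjoint pairs plus the number of pairs sharing a vertex (the latter at most $2|E(G)|n$). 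Without this redrawing (or a substitute), your reduction of the bisection-width recursion to a disjoint-pair count does not go through. A further problem is that you only control disjoint pairs inside a matching $M$ of size $m/\Delta$, which may be minuscule (e.g.\ $\Delta\sim n$), whereas $\ocn(G)$ concerns all pairs of edges; the paper applies Theorem~\ref{appdis} to the whole edge set.

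The other main step is also not carried out correctly. Your plan to iterate Theorem~\ref{appdis} for $\lceil\log_2 k\rceil$ levels to exhibit $k$ pairwise disjoint edges fails because after one application the families $M_1',M_2'$ need only be disjoint \emph{from each other}; internally each may be thrackle-like, with no disjoint pairs at all, so the density hypothesis needed to continue is unavailable — you name this obstacle but the proposed ``pivot'' is left vague, and your main line (a contradiction forcing low disjoint density) collapses with it. The paper's route is different and quantitatively sharper: in the many-disjoint-pairs case (threshold $|E(G)|^2/\Theta(\log^6 n)$, not a small constant $\varepsilon$), a single application of Theorem~\ref{appdis} gives two completely disjoint edge families of size $|E(G)|/(\log n)^{O(1)}$; since $G$ has no $k$ pairwise disjoint edges, one of them has no $k/2$ pairwise disjoint edges, so its size is at most $f(n,k/2)\le n(\log n)^{c_2\log k-c_2}$, and the polylogarithmic loss is absorbed by the drop of $c_2$ in the exponent. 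This recursion on $k$ (one application of Theorem~\ref{appdis} per halving of $k$), combined with the redrawing-plus-bisection recursion on $n$ in the few-disjoint-pairs case, is what yields $n(\log n)^{O(\log k)}$; your choice of a constant density threshold would not mesh with this bookkeeping even if the other gaps were repaired.
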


\begin{proof}  Let $c > 0$ and $c_1> 0$ be the absolute constants from Theorem \ref{appdis} and Lemma \ref{bisect} respectively, and let $c_2 > 0$ be a sufficiently large constant that will be determined later.  We define $f(n,k)$ to be the maximum number of edges in an $n$-vertex simple topological bipartite graph with no $k$ pairwise disjoint edges.  We will prove by induction on $n$ and $k$ that

 $$f(n,k) \leq  n(\log n)^{c_2\log k}.$$

 \noindent Clearly, we have $f(2,k) \leq 1$, and by the results on thrackles imply that $f(n,2) \leq 1.4n$.  Assume the statement holds for $n' < n$ and $k' < k$, and let $G$ be an $n$-vertex simple topological bipartite graph with no $k$ pairwise disjoint edges. The proof falls into two cases.

 \medskip

 \noindent \emph{Case 1.}  Suppose there are at least $|E(G)|^2/((2c_1)^2\log^6 n)$ disjoint pairs of edges in $G$.  Let us partition $E(G) = E_1\cup E_2$ into two parts, such that there are at least  $|E(G)|^2/(2(2c_1)^2\log^6 n)$ disjoint pairs in $E_1\times E_2$.  Since $E(G)$ is a collection of pseudo-segments, by Theorem~\ref{appdis}, there exist subsets $E'_1\subset E_1, E'_2\subset E_2$, each of size $|E(G)|/(2c_1\log n)^{6c},$ such that every edge in $E'_1$ is disjoint from every edge in $E'_2$.  Since $G$ does not contain $k$ pairwise disjoint edges, this implies that either $E'_1$ or $E'_2$ does not contain $k/2$ pairwise disjoint edges.  Without loss of generality, suppose $E'_1$ does not contain $k/2$ pairwise disjoint edge. Hence

 $$\frac{|E(G)|}{(2c_1\log n)^{6c}} \leq |E'_1| \leq f(n,k/2).$$

 \noindent By the induction hypothesis, we have

 $$f(n,k/2) \leq n(\log n)^{c_2\log(k/2)}  \leq n(\log n)^{c_2\log(k) - c_2}.$$

 \noindent Hence, for $c_2$ sufficiently large, we have $|E(G)|  \leq n(\log n)^{c_2\log(k)}.$

\medskip

\noindent \emph{Case 2.}  Suppose there are at most $|E(G)|^2/((2c_1)^2\log^6n)$ disjoint pairs of edges in $G$.  In what follows, we will apply a redrawing technique due to Pach and T\'oth in \cite{pach2}.  Since $G$ is bipartite, let $V(G) = V_1\cup V_2$.  We can redraw $G$ such that the vertices in $V_1$ lie above the line $y = 1$, the vertices in $V_2$ lie below the line $y = 0$, the edges in the strip $0 \leq y \leq 1$ are vertical segments, and we have neither created nor removed any crossings.  We then “flip” the half-plane bounded by the $y = 1$ line from left to right about the $y$-axis, and replace the edges in the strip $0\leq y \leq 1$ with straight line segments that
reconnect the corresponding pairs on the line $y = 0$ and $y = 1$.  See Figure \ref{redraw}.

\begin{figure}
\centering
\includegraphics[width=.7\textwidth]{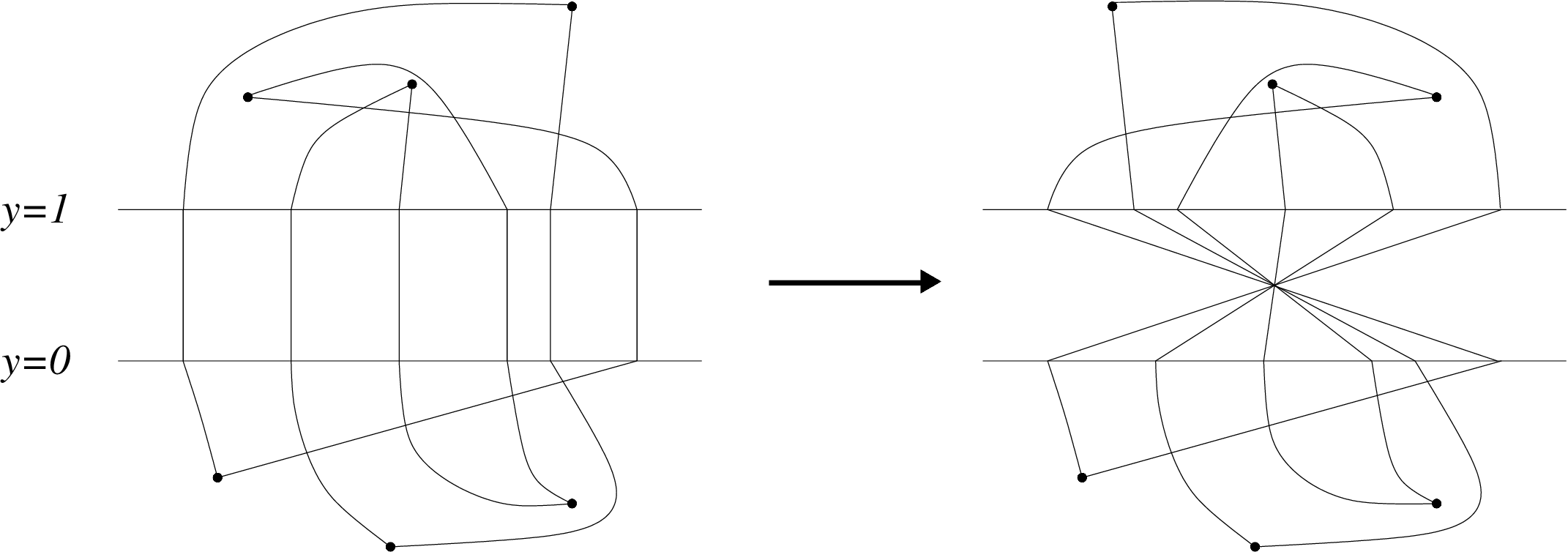}
\caption{Redrawing procedure}
\label{redraw}
\end{figure}

 Notice that if any two edges crossed in the original drawing, then they must cross an even number of times in the new drawing.  Indeed, suppose the edges $e_1$ and $e_2$ crossed in the original drawing.  By the simple condition, they share exactly 1 point in common.  Let $k_i$ denote the number of times edge $e_i$ crosses the strip for $i \in \{1,2\}$, and note that $k_i$ must be odd since $V_1$ lies above the line $y = 1$ and $v_2$ lies below the line $y = 0$. After we have redrawn our graph, these $k_1+k_2$ segments inside the strip will now pairwise cross, creating $\binom{k_1 + k_2}{2}$ crossing points.  Since edge $e_i$ will now cross itself $\binom{k_i}{2}$ times, this implies that there are now

 \begin{equation}
 \label{parity}
\binom{k_1 + k_2}{2} - \binom{k_1}{2} - \binom{k_2}{2} 
 \end{equation}

 \noindent crossing points between edges $e_1$ and $e_2$ inside the strip $0 \leq y \leq 1$.  One can easily check that (\ref{parity}) is odd when $k_1$ and $k_2$ are odd.  Since $e_1$ and $e_2$ had 1 point in common outside of the strip, this implies that $e_1$ and $e_2$ cross each other an even number of times in the new drawing. Moreover, we can easily get rid of self-intersections by making modifications in a small ball at these crossing points.

 Hence, the odd-crossing number in our new drawing is at most the number of disjoint pair of edges in the original drawing of $G$, plus the number of pair of edges that share a common vertex.  Since there are at most
 $$\sum\limits_{v\in V(G)} d^2(v) \leq 2|E(G)|n$$

 \noindent pairs of edges that share a vertex in $G$, this implies

$$\ocn(G) \leq \frac{|E(G)|^2}{(2c_1)^2\log^6n} + 2|E(G)|n.$$

\noindent   By Lemma~\ref{bisect}, there is a partition of the vertex set $V=V_1 \, \dot{\cup} \, V_2$ with $
 |V|/3\leq |V_i|\leq 2|V|/3$, where $i=1,2$, and

 $$|E(V_1,V_2)| \leq b(G) \leq c_1\log n \sqrt{\frac{|E(G)|^2}{(2c_1)^2\log^6 n} +  4n|E(G)|  }.$$

 \noindent  If $|E(G)|^2/((2c_1)^2\log^6 n) \leq  4n|E(G)|$, then for $c_2$ sufficiently large we have $|E(G)| \leq n(\log n)^{c_2\log k}$ and we are done.  Therefore, we can assume

   $$ b(G) \leq c_1\log n \sqrt{\frac{2|E(G)|^2}{(2c_1)^2\log^6 n} } \leq \frac{ |E(G)|}{\log^2 n}.$$

\noindent Let $|V_1| = n_1$ and $|V_2| = n_2$.  Since $n_1,n_2  < n$, by the induction hypothesis, we have

 $$
 \begin{array}{ccl}
 |E(G)| & \leq & b(G)+  n_1(\log n_1)^{c_2\log k} + n_2(\log n_2)^{c_2\log k}\\\\
     & \leq &  \frac{|E(G)|}{ \log^2  n}  + n(\log(2n/3))^{c_2\log k} \\\\
    & \leq &   \frac{|E(G)|}{ \log^2  n}+  n(\log n  - \log(3/2))^{c_2\log k},  \\\\
 \end{array}$$

\noindent which implies
$$|E(G)| \leq n(\log n)^{c_2\log k} \frac{(1 - \log(3/2)/\log n)^{c_2\log k}  }{1 - 1/\log^2n}  \leq n(\log n)^{c_2\log k}.$$\end{proof}

\section{Concluding remarks}

A graph is said to be a \emph{cograph} if it consists of a single-vertex or if it can be obtained from two smaller cographs by either taking their disjoint union or their join. We next define the depth of a cograph. The graph on one vertex has depth $0$. The depth of a cograph which is the disjoint union or join of two smaller cographs is one more than the maximum depth of the two smaller cographs. For example, the depth of a complete or empty graph on $k$ vertices is $\lceil \log_2 k \rceil$. 

The proof of Theorem \ref{disjoint} can be generalized as follows.

\begin{theorem}\label{cograph}
Let $F$ be a cograph with depth $d$.  If $G = (V,E)$ is an $n$-vertex simple topological graph with no matching $M$ whose intersection graph is isomorphic to $F$, then $|E(G)| \leq n(\log n)^{O( d)}$. 
\end{theorem}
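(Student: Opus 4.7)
The plan is to mimic the proof of Theorem \ref{proof}, replacing the parameter $k$ (the number of pairwise disjoint edges forbidden) by the depth $d$ of the cograph $F$. As in the original, by passing to a bipartite subgraph containing at least half of the edges, it suffices to prove the bound when $G$ is bipartite. Let $f(n,F)$ denote the maximum number of edges in an $n$-vertex simple topological bipartite graph with no matching whose intersection graph is isomorphic to $F$; we show by induction on $d$, and within each depth on $n$, that $f(n,F) \leq n(\log n)^{c_2 d}$ for a sufficiently large absolute constant $c_2$. The base cases are $d=0$ ($F$ a single vertex, forcing $f(n,F)=0$) and $d=1$ ($F$ is either two isolated vertices, reducing to the thrackle bound $O(n)$, or a single edge, reducing to planarity, also $O(n)$).

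For the inductive step with $d \geq 2$, we write $F = F_1 \circ F_2$, where $\circ \in \{\cup, *\}$ denotes disjoint union or join, and $F_1, F_2$ are cographs of depth at most $d-1$. The key observation is that any matching $M$ in $G$ whose intersection graph is $F$ decomposes as $M = M_1 \cup M_2$, where $M_i$ has intersection graph $F_i$ and where additionally every edge of $M_1$ is topologically disjoint from every edge of $M_2$ (when $\circ = \cup$) or crosses every edge of $M_2$ (when $\circ = *$). In a simple topological graph, edges that are either disjoint or properly crossing cannot share a vertex, so $V(M_1) \cap V(M_2) = \emptyset$ holds automatically and $M_1 \cup M_2$ is indeed a matching.

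The inductive step splits into two symmetric cases depending on $\circ$, each further divided into \emph{many good pairs} versus \emph{few good pairs}, where \emph{good} means disjoint pairs when $\circ = \cup$ and crossing pairs when $\circ = *$. In the many-good-pairs subcase (at least $|E(G)|^2/((2c_1)^2 \log^6 n)$ good pairs), we partition $E(G)$ into two halves concentrating the good pairs across the partition and apply Theorem \ref{appdis} (for disjoint pairs) or Theorem \ref{notold} (for crossing pairs) to obtain $E_1', E_2' \subseteq E(G)$, each of size $\geq |E(G)|/(2c_1 \log n)^{6c}$, such that every pair in $E_1' \times E_2'$ is good. If $E_1'$ contained a matching with intersection graph $F_1$ and $E_2'$ one with intersection graph $F_2$, their union would realize $F$ in $G$, a contradiction; hence either $|E_1'| \leq f(n, F_1)$ or $|E_2'| \leq f(n, F_2)$, and by induction each is at most $n(\log n)^{c_2(d-1)}$. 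Taking $c_2 \geq 6c+1$ absorbs the polylogarithmic loss and yields $|E(G)| \leq n(\log n)^{c_2 d}$.

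In the few-good-pairs subcase we use the bisection argument of Theorem \ref{proof}. When $\circ = \cup$, the Pach--T\'oth redrawing trick (valid because $G$ is bipartite) converts few disjoint pairs into $\ocn(G) \leq |E(G)|^2/((2c_1)^2 \log^6 n) + 2|E(G)|n$. When $\circ = *$, no redrawing is needed: since the drawing is simple every crossing pair meets exactly once, so $\ocn(G) \leq \cn(G) \leq |E(G)|^2/((2c_1)^2 \log^6 n)$ directly from the given drawing. In either case Lemma \ref{bisect} yields $b(G) \leq |E(G)|/\log^2 n$ (after handling the degenerate regime where $|E(G)| = O(n \log^6 n)$), and bounding each side of the bisection via induction on $n$ with the same $F$ closes the recursion exactly as in the original proof. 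The main obstacle is identifying the correct unified framework: the cograph decomposition $F = F_1 \circ F_2$ must mirror the divide-and-conquer structure of the original proof with $d$ playing the role of $\log k$, and both $\circ$-cases must be handled in parallel -- this is possible because Theorems \ref{appdis} and \ref{notold} are dual bipartite Ramsey statements and because in a simple topological graph the odd-crossing number is controlled by the crossing count in the given drawing.
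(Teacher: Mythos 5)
Your proposal is correct and follows essentially the same route as the paper's proof sketch: induct on the depth via the cograph decomposition of $F$ into a disjoint union or a join of $F_1$ and $F_2$, handling the disjoint-union case exactly as in Theorem \ref{proof} and the join case by the crossing-density theorem (many crossing pairs) or a bisection-width recursion on $n$ (few crossing pairs). The only immaterial deviations are that you invoke the bipartite Theorem \ref{notold} instead of the non-bipartite Theorem \ref{old}, and that in the join/few-crossings case you reuse Lemma \ref{bisect} together with the observation that a simple drawing witnesses $\ocn(G)$ at most its number of crossing pairs, where the paper instead cites the Pach--Shahrokhi--Szegedy crossing-number bisection bound.
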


\begin{proof}[Proof sketch]
The proof is very similar to the proof of Theorem \ref{proof}.  We proceed by induction on the depth $d$ and $n$. When $d=0$ or $n =1$, the statement is trivial. For the inductive step, let $F = F_1\cup F_2$, where $F_1$ and $F_2$ are two smaller cographs.  If $F$ is the disjoint union of $F_1$ and $F_2$, then we follow the proof of Theorem \ref{proof}.  

If $F$ is the join of $F_1$ and $F_2$, then the proof is again very similar to the proof of Theorem~\ref{proof}.  We consider the two cases when $G$ has at least $|E(G)|^2/(c\log^6n)$ crossing edges, or less than $|E(G)|^2/(c\log^6n)$ crossing edges, where $c$ is a large constant.  In the former case, we apply Theorem~\ref{old} to the edges of $G$ and obtain two subsets $E_1,E_2\subset E(G)$, each of size $|E(G)|/(2c\log n)^{6c'}$ such that every edge in $E_1$ crosses every edge in $E_1$ and $c'$ is an absolute constant.  Hence we can apply induction to each $E_i$ and we are done.    In the latter case, we apply a bisection width formula due to Pach, Shahrokhi, and Szegedy \cite{PSS} and apply induction on $n$.  \end{proof}

A \emph{natural $k$-grid} in a topological graph is a pair of edge sets $E_1, E_2$, such that $E_i$ consists of $k$ pairwise disjoint edges, and every edge in $E_1$ crosses every edge in $E_2$.  In \cite{ackermangrid}, Ackerman and the authors showed that every $n$-vertex simple topological graph with no natural $k$-grid has at most $O(n\log^{4k-6}n)$ edges.  As a corollary to Theorem \ref{cograph}, noting that the balanced complete bipartite graph with $k$ vertices in each part is a cograph with depth $\lceil \log_2 k \rceil +1$, we have the following.

\begin{theorem}
    If $G = (V,E)$ is an $n$-vertex simple topological graph with no natural $k$-grid, then $|E(G)| \leq 3n(\log n)^{O(\log k)}.$
\end{theorem}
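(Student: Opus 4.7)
The plan is to deduce this statement directly from Theorem~\ref{cograph}, after observing that avoiding a natural $k$-grid is the same as avoiding a matching whose intersection graph is the complete bipartite graph $K_{k,k}$, and then computing the cograph depth of $K_{k,k}$. The whole argument is therefore a short reduction.

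First, I would verify the combinatorial reduction. Suppose $G$ contains a natural $k$-grid $(E_1, E_2)$. Within each $E_i$ the edges are pairwise disjoint, so they share no endpoints and cross no other edge of $E_i$. For $e \in E_1$ and $f \in E_2$, we know that $e$ and $f$ cross. Since $G$ is simple, $e$ and $f$ meet in at most one point, so they cannot also share an endpoint; therefore $e$ and $f$ are vertex-disjoint. Hence the $2k$ edges of $E_1 \cup E_2$ are pairwise vertex-disjoint, i.e.\ they form a matching $M$ in $G$, and by construction the intersection graph of $M$ is isomorphic to $K_{k,k}$. Conversely, a matching with intersection graph $K_{k,k}$ immediately exhibits a natural $k$-grid.

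Next, I would record that $K_{k,k}$ is a cograph of depth $\lceil \log_2 k \rceil + 1$, as already noted in the statement preceding Theorem~\ref{cograph}. The empty graph on $k$ vertices is built from single vertices by iterated disjoint unions, and a balanced binary splitting gives depth exactly $\lceil \log_2 k \rceil$ (the example computed in the paper). Since $K_{k,k}$ is the join of two copies of $\overline{K_k}$, its depth is one more, namely $\lceil \log_2 k \rceil + 1$.

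Finally, I would apply Theorem~\ref{cograph} with $F = K_{k,k}$ and $d = \lceil \log_2 k \rceil + 1$. A topological graph with no $k$ pairwise disjoint edges forming a natural grid contains no matching whose intersection graph is $K_{k,k}$, so the theorem yields $|E(G)| \leq n(\log n)^{O(d)} = n(\log n)^{O(\log k)}$, with the constant $3$ and any loss from $\lceil \log_2 k \rceil + 1 = O(\log k)$ absorbed into the implicit constant in the exponent. The only delicate point is the first step -- checking that the grid really produces a matching and not merely a collection of edges with some shared endpoints between $E_1$ and $E_2$ -- but the simple topological assumption handles this cleanly, and beyond that the result is a formal corollary.
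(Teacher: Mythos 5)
Your proposal is correct and takes essentially the same route as the paper, which states this result as an immediate corollary of Theorem~\ref{cograph} using the fact that $K_{k,k}$ is a cograph of depth $\lceil \log_2 k\rceil+1$. Your explicit check that (under the simplicity assumption) a natural $k$-grid corresponds exactly to a matching whose intersection graph is $K_{k,k}$ just fills in the detail the paper leaves implicit.
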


\subsection{Further remarks on properties of hereditary families of graphs}

In Section \ref{her}, we proved several properties of hereditary families of graphs are equivalent. Here we extend on these equivalences with some natural variants of these properties. 

It is natural to drop the assumption that the vertex subsets have the same size in the definition of the mighty Erd\H{o}s-Hajnal property. So we say a family $\mathcal{F}$ of graphs has the unbalanced mighty Erd\H{o}s-Hajnal property if there is a constant $c>0$ such that for every graph $G \in \mathcal{F}$ and for all disjoint vertex subsets $A$ and $B$ of $G$, there are subsets $A' \subset A$ and $B' \subset B$ with $|A'| \geq c|A|$ and $|B'| \geq c|B|$ and the bipartite graph between $A'$ and $B'$ is complete or empty. Trivially, if $\mathcal{F}$ has the unbalanced mighty Erd\H{o}s-Hajnal property, then it also has the mighty Erd\H{o}s-Hajnal property. The following proposition shows that something close to a converse also holds. 

\begin{lemma}\label{clonehelp}
If a hereditary family $\mathcal{F}$ of graphs has the mighty Erd\H{o}s-Hajnal property and is closed under cloning vertices, then $\mathcal{F}$ also has the unbalanced mighty Erd\H{o}s-Hajnal property. 
\end{lemma}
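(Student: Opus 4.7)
Let $c>0$ be the constant witnessing the (balanced) mighty Erd\H{o}s--Hajnal property of $\mathcal{F}$. Given $G\in\mathcal{F}$ and disjoint $A,B\subset V(G)$, we may assume without loss of generality that $|A|\le |B|$ and that $|A|\ge 1$ (otherwise the statement is vacuous). Write $a=|A|$ and $b=|B|$, and set $k=\lceil b/a\rceil$, so that $b\le ka\le 2b$. The plan is to use the closure under cloning to inflate $A$ into a set of size at least $b$, apply the balanced mighty Erd\H{o}s--Hajnal property to the inflated configuration, and then pull back the resulting homogeneous pair to the original graph.

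Concretely, replace every $v\in A$ by $k$ clones $v^{(1)},\ldots,v^{(k)}$, each having exactly the same adjacencies to $V(G)\setminus A$ as $v$ does. Call the resulting graph $\tilde G$, with inflated set $\tilde A$ of size $ka\ge b$, while $B$ remains unchanged. By hypothesis, $\tilde G\in\mathcal{F}$. Fix an arbitrary $\tilde A_0\subset\tilde A$ of size exactly $b$ chosen so that the cloned copies of any single $v\in A$ are taken in $\tilde A_0$ to the extent possible; then apply the mighty Erd\H{o}s--Hajnal property to the disjoint pair $(\tilde A_0,B)$ in $\tilde G$. This yields subsets $\tilde A'\subset\tilde A_0$ and $B'\subset B$ with $|\tilde A'|\ge cb$ and $|B'|\ge cb$ such that the bipartite graph between $\tilde A'$ and $B'$ in $\tilde G$ is complete or empty.

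Finally, define $A'\subset A$ to be the set of original vertices $v\in A$ with at least one clone in $\tilde A'$. Because clones of the same vertex share all external adjacencies, every vertex of $A'$ has the same relationship to every vertex of $B'$ as its clones do in $\tilde G$, so the bipartite graph between $A'$ and $B'$ in $G$ is complete or empty. For the size of $A'$, each $v\in A$ contributes at most $k\le 2b/a$ clones, hence
\[
|A'|\;\ge\;\frac{|\tilde A'|}{k}\;\ge\;\frac{cb}{2b/a}\;=\;\frac{c}{2}\,|A|,\qquad |B'|\;\ge\;cb\;\ge\;c\,|B|.
\]
Thus $c'=c/2$ witnesses the unbalanced mighty Erd\H{o}s--Hajnal property. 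The only mildly delicate point is bookkeeping of the cloning multiplicities to ensure the pulled-back set $A'$ retains a constant fraction of $A$; this is handled by the bound $k\le 2b/a$, which in turn comes from the assumption $|A|\le |B|$.
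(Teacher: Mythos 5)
Your proof is correct and follows essentially the same route as the paper: use closure under cloning to balance the two sides, apply the balanced mighty Erd\H{o}s--Hajnal property to the inflated graph, and pull the homogeneous pair back along clones. The only difference is cosmetic --- the paper clones symmetrically (each vertex of $A$ cloned $|B|$ times and each vertex of $B$ cloned $|A|$ times, so both sides have size exactly $|A||B|$ and the constant $c$ is preserved), whereas your one-sided cloning with the rounding $k=\lceil |B|/|A|\rceil$ costs a harmless factor of $2$ in the constant.
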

\begin{proof}
Since $\mathcal{F}$ has the mighty Erd\H{o}s-Hajnal property, there is a constant $c>0$ such that if $G \in \mathcal{F}$ and $A,B$ are disjoint vertex subsets of $G$ of equal size, then there are $A' \subset A$ and $B' \subset B$ each of size at least $c|A|$ that are complete or empty to each other. 

Let $G \in \mathcal{F}$ and $A,B$ be disjoint vertex subsets of $G$. Every vertex in $A$ we clone $|B|$ times and every vertex in $B$ we clone $|A|$ times to get another graph $G' \in \mathcal{F}$. Let $A^*$ be the vertex subset of $G'$ of clones of vertices in $A$, and $B^*$ be the vertex subset of $G'$ of clones of vertices in $B$, so $|A^*|=|B^*|=|A||B|$. Applying the mighty Erd\H{o}s-Hajnal property to $G'$ and vertex subsets $A^*$ and $B^*$, we get subsets $A' \subset A^*$ and $B^* \subset B$ that are complete or empty to each other and 
$|A'|,|B'| \geq c|A||B|$. Let $A'' \subset A$ be those vertices that contain which has at least one clone in $A^*$ and $B'' \subset B$ be those vertices that contain which has at least one clone in $B^*$. Then $|A''| \geq c|A|$, $|B''| \geq c|B|$, and $A''$ and $B''$ are complete or empty to each other. So $\mathcal{F}$ also has the unbalanced mighty Erd\H{o}s-Hajnal property. 
\end{proof}

Noting that by drawing another pseudo-segment very close to an original pseudo-segment, we see that the family of intersection graphs of pseudo-segments is closed under cloning. So we have the following corollary of Lemma \ref{clonehelp} and Theorem \ref{main}.  

\begin{corollary}
The family of intersection graphs of pseudo-segments has the unbalanced mighty Erd\H{o}s-Hajnal property. 
\end{corollary}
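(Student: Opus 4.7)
The plan is to verify the hypotheses of Lemma \ref{clonehelp} for the family $\mathcal{F}$ of intersection graphs of pseudo-segments, whereupon the corollary is immediate. Two of the three hypotheses are already in hand. The family $\mathcal{F}$ is hereditary: a subcollection of a pseudo-segment collection is again a pseudo-segment collection, and its intersection graph is the corresponding induced subgraph. By Theorem \ref{main} (equivalently, by Theorem \ref{equivalences} applied to Theorem \ref{regularity}), $\mathcal{F}$ has the mighty Erd\H{o}s-Hajnal property.

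The only ingredient to verify is that $\mathcal{F}$ is closed under cloning vertices. Given a pseudo-segment collection $\mathcal{C}$ and a curve $\alpha \in \mathcal{C}$, I would construct a clone $\alpha'$ of $\alpha$ as follows. Choose a tubular neighborhood $N$ of $\alpha$ so small that $N$ contains no crossing point of $\mathcal{C}$ not lying on $\alpha$, each curve $\gamma \in \mathcal{C}\setminus \{\alpha\}$ crossing $\alpha$ meets $N$ in a single short transversal arc, and each $\gamma$ disjoint from $\alpha$ is also disjoint from $N$. Draw $\alpha'$ as a parallel translate of $\alpha$ inside $N$, disjoint from $\alpha$; then generically perturb $\alpha'$ near each crossing $\alpha \cap \gamma$ so that $\alpha'$ meets the arc $\gamma \cap N$ in exactly one point, and so that no three members of $\mathcal{C} \cup \{\alpha'\}$ share a common point. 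The enlarged collection is still a collection of pseudo-segments, and $\alpha'$ intersects each $\gamma \in \mathcal{C}\setminus\{\alpha\}$ if and only if $\alpha$ does. Iterating this construction inside successively smaller sub-neighborhoods produces any desired number of pairwise disjoint clones of $\alpha$, each with exactly the required intersection pattern outside the clone-family.

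The main (and rather mild) obstacle is this topological construction of the clone: one must be careful to perturb $\alpha'$ so that it picks up exactly one crossing with each $\gamma$ that crossed $\alpha$, picks up none with those disjoint from $\alpha$, and avoids creating new triple points. Since each $\gamma \cap N$ is a single short arc transversal to $\alpha$, the required local perturbations are standard and routine. With $\mathcal{F}$ hereditary, satisfying the mighty Erd\H{o}s-Hajnal property, and closed under cloning, Lemma \ref{clonehelp} applies directly and yields the unbalanced mighty Erd\H{o}s-Hajnal property for $\mathcal{F}$, which is the content of the corollary.
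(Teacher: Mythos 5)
Your proposal is correct and follows the paper's own route: the paper likewise deduces the corollary from Lemma \ref{clonehelp} together with Theorem \ref{main}, observing that cloning is realized by drawing a new pseudo-segment very close to the original, which is exactly your tubular-neighborhood construction spelled out in more detail.
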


One can similarly define an unbalanced homogeneous density property. 
A family $\mathcal{F}$ of graphs has the \emph{unbalanced homogeneous density property} if there is $C=C(\varepsilon)$ such that for every $G \in \mathcal{F}$ and every pair $A,B \subset V$ of disjoint vertex subsets of $G$ with at least $\varepsilon |A||B|$ edges, there are $A' \subset A$ and $B' \subset B$ with $|A'| \geq \varepsilon^{C}|A|$ and $|B'| \geq \varepsilon^C |B|$ that are complete between them. Trivially, if a family of graphs has the unbalanced homogeneous density property, then it also has the homogeneous density property. In the other direction, just as in the proof of Lemma \ref{clonehelp}, if a hereditary family of graphs has the homogeneous density property and is closed under cloning vertices, then it also has the unbalanced homogeneous density property. In particular, the family of intersection graphs of pseudo-segments has the unbalanced homogeneous density property. 

In the definition of the homogeneous density property, we require a polynomial dependence on the density $\varepsilon$. It turns out that for a hereditary family $\mathcal{F}$ and the family $\overline{\mathcal{F}}$ of complements of graphs in $\mathcal{F}$, having this property is equivalent to the seemingly weaker property that the there is just some dependence on the density $\varepsilon$. We formalize this now. A family $\mathcal{F}$ of graphs has the \emph{weak homogeneous density property} if 
\begin{itemize} 
\item there is a function $\delta:(0,1] \rightarrow (0,1]$ such that 
for every $G \in \mathcal{F}$ and every pair $A,B \subset V$ of disjoint vertex subsets of $G$ of the same size with at least $\varepsilon |A||B|$ edges between them, there are $A' \subset A$ and $B' \subset B$ with $|A'| \geq \delta(\varepsilon)|A|$ and $|B'| \geq \delta(\varepsilon)|B|$ that are complete between them. 
\end{itemize}

We can add to the list of equivalent properties in Theorem \ref{equivalences} that $\mathcal{F}$ and $\overline{\mathcal{F}}$ have the \emph{weak homogeneous density property}. 

\begin{theorem}\label{weakequivalent}
A hereditary family of $\mathcal{F}$ of graphs satisfies $\mathcal{F}$ and $\overline{\mathcal{F}}$ have the homogeneous density property if and only if $\mathcal{F}$ and $\overline{\mathcal{F}}$ have the weak homogeneous density property. 
\end{theorem}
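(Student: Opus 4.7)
The forward direction is immediate: if $\mathcal{F}$ has the homogeneous density property with constant $C = C(\varepsilon)$, then setting $\delta(\varepsilon) = \varepsilon^{C(\varepsilon)}$ witnesses the weak homogeneous density property for $\mathcal{F}$, and symmetrically for $\overline{\mathcal{F}}$. So only the reverse direction requires work.

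The plan is to reduce to the chain of equivalences already established in Theorem \ref{equivalences}. Specifically, I will show that if both $\mathcal{F}$ and $\overline{\mathcal{F}}$ enjoy the weak homogeneous density property, then $\mathcal{F}$ has the mighty Erd\H{o}s-Hajnal property; once that is done, Theorem \ref{equivalences} (item 1 implies item 3) immediately upgrades this to the full (polynomial) homogeneous density property for both $\mathcal{F}$ and $\overline{\mathcal{F}}$.

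To prove mighty Erd\H{o}s-Hajnal, I will essentially imitate the proof of Lemma \ref{hdpimplymighty}, but exploit the fact that the weak property is only used at the single value $\varepsilon = 1/2$, so no polynomial dependence is needed. Let $\delta_{\mathcal{F}}$ and $\delta_{\overline{\mathcal{F}}}$ be the functions witnessing the weak homogeneous density property for $\mathcal{F}$ and $\overline{\mathcal{F}}$, respectively, and set $c = \min\{\delta_{\mathcal{F}}(1/2),\,\delta_{\overline{\mathcal{F}}}(1/2)\} > 0$. Given $G \in \mathcal{F}$ and disjoint vertex subsets $A, B$ of the same size, the bipartite graph between $A$ and $B$ has edge density either at least $1/2$ or at most $1/2$. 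In the first case, apply the weak homogeneous density property of $\mathcal{F}$ at $\varepsilon = 1/2$ to obtain $A' \subset A$ and $B' \subset B$ with $|A'|, |B'| \geq c|A|$ complete between them. In the second case, the bipartite complement has edge density at least $1/2$, so apply the weak homogeneous density property of $\overline{\mathcal{F}}$ at $\varepsilon = 1/2$ to obtain $A', B'$ of the same size that are empty between them. Either way, we have produced the homogeneous pair required by the mighty Erd\H{o}s-Hajnal property with constant $c$.

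There is essentially no obstacle: the key (and very short) observation is that for the definition of the mighty Erd\H{o}s-Hajnal property we only ever need to apply the density hypothesis at the fixed threshold $1/2$, and any positive value of $\delta$ at that threshold supplies a positive mighty Erd\H{o}s-Hajnal constant. Once mighty Erd\H{o}s-Hajnal is in hand, Theorem \ref{equivalences} (specifically Lemma \ref{mightytodensity}, whose proof uses the weak bipartite regularity lemma of Koml\'os--S\'os, Lemma \ref{sz}) automatically yields the polynomial dependence $\varepsilon^{C(\varepsilon)}$ in the homogeneous density property for both $\mathcal{F}$ and $\overline{\mathcal{F}}$, completing the proof.
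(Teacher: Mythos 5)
Your proposal is correct and follows essentially the same route as the paper: both show the weak property at the single threshold $\varepsilon=1/2$ already yields the mighty Erd\H{o}s--Hajnal property (by the argument of Lemma \ref{hdpimplymighty}), and then invoke Lemma \ref{mightytodensity} (via Theorem \ref{equivalences}) to recover the polynomial homogeneous density property for both $\mathcal{F}$ and $\overline{\mathcal{F}}$. Your write-up merely makes the $\varepsilon=1/2$ observation more explicit than the paper's sketch.
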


The ``only if'' direction of Theorem \ref{weakequivalent} is trivial. To see the ``if'' direction, observe that the proof of Lemma \ref{hdpimplymighty} that $\mathcal{F}$ and $\overline{\mathcal{F}}$ have the homogeneous density property implies that $\mathcal{F}$ has the mighty Erd\H{o}s-Hajnal property extends to show that $\mathcal{F}$ and $\overline{\mathcal{F}}$ have the weak homogeneous density property implies that $\mathcal{F}$ has the mighty Erd\H{o}s-Hajnal property. Further, Lemma \ref{mightytodensity} implies hereditary $\mathcal{F}$ has the mighty Erd\H{o}s-Hajnal property implies that $\mathcal{F}$ and $\overline{\mathcal{F}}$ have the homogeneous density property. Thus, the ``if'' direction holds as well.

\newpage

\end{document}